\tikzset{%
  mleftdelimiter/.style={inner ysep=0pt, inner xsep=1ex,left delimiter=\{,label={[label distance=3mm]left:#1}}
}
\definecolor{light-gray}{gray}{0.95}
\definecolor{darkblue}{rgb}{0,0,.5}
\newcommand{\E}{\mathbb{E}}
\newcommand{\N}{\mathbb{N}}
\newcommand{\R}{\mathbb{R}}
\newcommand{\uu}{\bold{u}}
\newcommand{\zz}{\bold{z}}
\newcommand{\cM}{\mathcal{M}}
\newcommand{\cC}{\mathcal{C}}
\newcommand{\cF}{\mathcal{F}}
\newcommand{\cA}{\mathcal{A}}
\newcommand{\cB}{\mathcal{B}}
\newcommand{\cR}{\mathcal{R}}
\newcommand{\cT}{\mathcal{T}}
\providecommand{\keywords}[1]{\textbf{Keywords } #1}
\newcommand{\eqd}{\stackrel{\mathrm{d}}=}
\newcommand{\bN}{N}
\newcommand{\bE}{E}
\newcommand{\bT}{T}
\newcommand{\1}{\mathds{1}}
\newcommand{\de}{\mathsf{\,d}}
\newcommand{\Ran}{\mathsf{Ran}}
\newcommand{\law}{\mathsf{law}}
\newcommand{\MTP}{\mathrm{\text{MTP}_2}}
\newcommand{\Cor}{\mathrm{Cor}}
\newcommand{\CIS}{\textnormal{CIS} }
\newcommand{\CI}{\textnormal{CI} }
\newcommand{\Gauss}{\textnormal{Ga}}
\newcommand{\Cl}{\textnormal{Cl}}
\newcommand{\SCl}{\textnormal{SCl}}
\DeclareMathOperator{\Cov}{Cov}
\DeclareMathOperator{\v@r}{V@R}
\DeclareMathOperator{\av@r}{AV@R}
\newtheorem{theorem}{Theorem}[section]
\newtheorem{proposition}[theorem]{Proposition}
\newtheorem{corollary}[theorem]{Corollary}
\newtheorem{definition}[theorem]{Definition}
\newtheorem{lemma}[theorem]{Lemma}
\newtheorem{example}[theorem]{Example}
\newtheorem{remark}[theorem]{Remark}
\newenvironment{proof}[1][{Proof:}]{\begin{trivlist}
\item[\hskip \labelsep {\bfseries #1}]}{\hfill$\blacksquare$ \end{trivlist}}
\author[1]{Jonathan Ansari\thanks{Corresponding author: \texttt{jonathan.ansari@plus.ac.at}}}
\author[2]{Moritz Ritter\thanks{\texttt{moritz.ritter@stochastik.uni-freiburg.de}}}
\affil[1]{University of Salzburg (Austria)}
\affil[2]{University of Freiburg (Germany)}
\title{Comparison results for positive supermodular dependent \\ Markov tree distributions}
\begin{document}

\maketitle

\begin{abstract}
Positive dependencies have been compared in the literature under rather strong assumptions such as equality of conditional distributions, exchangeability, or stationarity. 
We establish supermodular ordering results for distributions that are Markov with respect to a tree structure. 
Our comparison results rely on simple stochastic monotonicity conditions and a pointwise ordering of bivariate copulas associated with the edges of the underlying tree.
We also study flexibility of the marginal distributions in stochastic and convex order.
As a consequence, we obtain first- and second-order stochastic dominance results for extreme order statistics and sums of positively dependent random variables.
As an application, we investigate distributional robustness of the maximum of a perturbed random walk under model uncertainty.
Several examples and a detailed discussion of the assumptions demonstrate the generality of our results and reveal deeper insights into non-intuitive positive dependence properties of multidimensional distributions.

\keywords{
bivariate copula,
conditional independence,
distributional robustness,
factor model,
hidden Markov model,
graphical model,
model uncertainty,
pair-copula construction,
perturbed random walk,
positive dependence,
tree-based dependence,
stochastic dominance,
supermodular order,
vine copula model
}
\end{abstract}

\maketitle

\section{Introduction}\label{secintro}

Positive dependencies play a fundamental role in both the theoretical and applied aspects of statistics. They arise naturally in a wide range of models and applications, including physics \cite{Propp-1996}, economics \cite{Milgrom-Roberts-1990}, finance and risk management \cite{Embrechts-2002,Denuit-2006}, reliability theory \cite{Barlow-1975}, and network analysis \cite{Dorp-1999,Daduna-1995}.
There is a large literature on concepts of positive dependence such as total positivity \cite{Fallat-2017,Karlin-Rinott-1981,Karlin-Rinott-1980}, conditional increasingness \cite{Mueller-Scarsini-2001}, association \cite{Fortuin-1971,Esary-1967}, positive supermodular dependence \cite{Shaked-1997}, and positive quadrant dependence \cite{Lehmann-1966}. 
All these concepts involve the comparison of positively dependent random vectors with independent random vectors of the same Fr{\'e}chet class.

The literature on comparing strengths of positive dependencies in a given Fr{\'e}chet class includes well known parametric families of distributions, e.g. the multivariate normal distribution \cite{Mueller-2001}, elliptical distributions \cite{Ansari-Rueschendorf-2021b,Yin-2023}, Archimedean copulas \cite{Mueller-Scarsini-2005}, or multivariate exponential distributions \cite{Marshall-Olkin-2011}.
In contrast, for non-parametric classes of positively dependent distributions, ordering results are studied under quite restrictive assumptions such as 
equality of conditional distributions \cite{Bauerle-1997,Tong-1977,Tong-1989}, exchangeability \cite{Shaked-1985}, or stationarity \cite{Hu-2000,Bauerle-1997b}.
In this paper, we establish supermodular ordering results for random vectors that are conditionally independent with respect to a tree structure. Our ordering conditions are both flexible and easy to verify, since they rely only on simple positive dependence assumptions and a pointwise ordering of the bivariate copulas 
associated with the edges of the underlying tree.

In many applications including reliability analysis, finance and insurance, particular important statistics are the minimum, the maximum, and the sum over the components of a random vector. The minimum may describe the first time a unit breaks down, leading to system failure. The maximum of a stochastic process is closely linked to the first passage time across a specified threshold.
The aggregated components of a random vector may represent the value of a portfolio of assets or the total insurance loss.
It is well known that comonotonicity describes the most extreme positive dependence for the maximum\footnote{The minimum and the maximum are related by symmetry, so all results carry over to the minimum, e.g. \(\min_n\{X_n\} \leq_{st} \min_n\{X_n^c\}.\) We will focus on the maximum in the sequel.} in stochastic order and the most risky dependence structure for the sum in convex order,\footnote{For real-valued random variables \(S\) and \(T\), the stochastic order \(S\leq_{st} T\) and the convex order \(S\leq_{cx} T\) are defined by \(\E \varphi(S) \leq \E \varphi(T)\) for all increasing resp. convex function \(\varphi\) such that the expectations exist; see Definition \ref{defdepord}.}
 i.e.,
 \begin{align*}
     \max_n\{X_n\}\geq_{st} \max_n\{X_n^c\} \quad \text{and} \quad \sum_n X_n \leq_{cx} \sum_n X_n^c
 \end{align*}
 for any random vector \((X_1,\ldots,X_d),\) where \(X_n^c := F_{X_n}^{-1}(U)\) for \(U\) uniform on \((0,1);\) see \cite{Tchen-1980,Wang-2025}.
However, in practice, comonotonicity is too extreme to be useful, 
which motivates to study stochastic orderings for comparing positively dependent random variables.

In the two-dimensional case and for fixed marginals, a sufficient condition for ordering maxima and sums is the upper orthant order \(\leq_{uo}\) (i.e., the pointwise order of survival functions).
As a direct consequence of the ordering results in \cite{Tchen-1980}, \((X_1,X_2)\leq_{uo} (Y_1,Y_2)\) with \(\law(X_i) = \law(Y_i)\) implies
\begin{align}\label{eqbivcase}
      \max\{X_1,X_2\} \geq_{st} \max\{Y_1,Y_2\} \quad \text{and} \quad  X_1+X_2 \leq_{cx} Y_1+Y_2.
\end{align}
These conditions are satisfied by many families of bivariate distributions and can often easily be checked. For dimension \(3\) and larger, a pointwise comparison of survival functions still implies a comparison of maxima in stochastic order, but no more a comparison of aggregated components in convex order; see \cite[Theorem 2.6]{Mueller-2000}. 

A stronger dependence order that also implies a convex ordering of component sums is the supermodular order. 
Largest elements in supermodular order are comonotonic random vectors reflecting the worst case behavior of sums as discussed above \cite{Tchen-1980}. The supermodular order is an integral stochastic order defined by comparing expectations of supermodular functions (see Definition \ref{defdepord} and Table \ref{table_functions}).
In the bivariate case, it reduces to the lower/upper orthant order, for which a verification is straightforward; see \eqref{eqbivord}.
In contrast, for dimension \(3\) and larger, checking the supermodular order is difficult because no small class of functions generating this order is known.

Well known construction methods for random vectors that are comparable in supermodular order are based on common mixture models, majorization, sequential Markov constructions, independence, or comonotonicity \cite{Bauerle-1997,Bauerle-1997b,Bauerle-1998,Hu-2000, Shaked-Shanthikumar-2007}. 
The results in these references generalize the inequalities in \eqref{eqbivcase} to arbitrary dimension. However,
the model assumptions are quite restrictive, as they involve equality of conditional distributions
or stationarity. 
For exchangeable random vectors, comparison results for \(\max_n\{X_n\}\) in stochastic order and for the variance of \(\sum_n X_n\) are derived in \cite{Shaked-1985}. 
Note that exchangeability is closely related to conditional independence. 
More precisely, by de Finett's theorem, any finite subset of an infinite sequence of exchangeable random variables is conditionally i.i.d. given a latent factor variable; see \cite{Hewitt-Savage-1955}. For (not necessarily conditionally i.i.d.) factor models, a supermodular ordering result was recently established in \cite{Ansari-Rueschendorf-2023}. This ordering result only requires stochastic monotonicity conditions and a pointwise ordering of bivariate copulas.
In particular, it extends \eqref{eqbivcase} to arbitrary dimensions for conditionally independent random variables. In this paper, we generalize the supermodular ordering result for factor models to the class of Markov tree distributions which model conditional independence along tree structures; see Definition \ref{defMTD}.

\begin{figure}[t]
	\begin{tikzpicture}[scale=.9, transform shape, remember picture]
            
        \begin{scope}[shift={(0,0)}] 
			\tikzstyle{every circle node} = []
			\node[circle] (a) at (0, 0) {\(X_0\)};
			\node[circle] (r) at +(-1,0) {a)};
			\node[circle] (b) at +(0: 1.7) {\(X_1\)};
			\node[circle] (c) at +(0: 3.4) {\(X_2\)};
			\node[circle] (d) at +(0: 5.1) {\(X_3\)};
            \node[circle] (e) at +(0: 5.8) {\(\cdots\)};
			\draw [-] (a) -- (b) node[pos=0.5,above]{\(B_{01}\)};
			\draw [-] (b) -- (c) node[pos=0.5,above]{\(B_{12}\)};
			\draw [-] (c) -- (d) node[pos=0.5,above]{\(B_{23}\)};
		\end{scope}

		\begin{scope}[shift={(11,-1.5)}] 
			\tikzstyle{every node} = []
			\node (a) at (-1.1, 1.6) {\(X_1\)};
			\node[circle] (r) at +(-3,1.5) {b)};
			\node (b) at +(1.25,1.75) {\(X_2\)};
			\node (c) at +(-.7,-1) {\(\ddots\)};
			\node (d) at +(2,-.5) {\(X_3\)};
			\node (e) at +(0.2,0.3) {\(X_0\)};
            \node (f) at +(0.25,-1.5) {\(X_4\)};
            \node (g) at +(-1.6,-.4) {\(X_d\)};
			\draw [-] (a) -- (e) node[pos=0.4, right=0.05]{\(B_{01}\)};
			\draw [-] (b) -- (e) node[pos=0.5, right=-0.03]{\(B_{02}\)};
			\draw [-] (d) -- (e) node[pos=0.4, above]{\(B_{03}\)};
            \draw [-] (f) -- (e) node[pos=0.4, right]{\(B_{04}\)};
            \draw [-] (g) -- (e) node[pos=0.6, below]{\(B_{0d}\)};
		\end{scope}

        \begin{scope}[shift={(2,-1.0)}] 
			\tikzstyle{every node} = []
			\node (a) at (0,0) {\(X_0\)};
            \node (b) at +(-1.2*1.2,-1.2*1.2) {\(X_1\)};
			\node (c) at +(1.2*1.2,-1.2*1.2) {\(X_7\)};
			\node (d) at +(-2*1.2,-2.4*1.2) {\(X_2\)};
			\node (e) at +(-0.5*1.2,-2.4*1.2) {\(X_3\)};
			\node (f) at +(0.5*1.2,-2.4*1.2) {\(X_8\)};
            \node (g) at +(2*1.2,-2.4*1.2) {\(X_9\)};
            \node (h) at +(-1.7*1.2,-3.8*1.2) {\(X_4\)};
            \node (i) at +(-.5*1.2,-3.8*1.2) {\(X_5\)};
            \node (j) at +(.7*1.2,-3.8*1.2) {\(X_6\)};
            \node (k) at +(1.5*1.2,-3.8*1.2) {\(X_{10}\)};
            \node (l) at +(2.7*1.2,-3.8*1.2) {\(X_{11}\)};
            \node (m) at +(2.1*1.2,-4.3*1.2) {\(\vdots\)};
            \node (n) at +(-.6*1.2,-4.3*1.2) {\(\vdots\)};
			\draw [-] (a) -- (b) node[pos=0.4, left]{\(B_{01}\)};
			\draw [-] (a) -- (c) node[pos=0.4, right=-0.03]{\(B_{07}\)};
            \draw [-] (b) -- (e) node[pos=0.4, right]{\(B_{13}\)};
			\draw [-] (b) -- (d) node[pos=0.4, left]{\(B_{12}\)};
            \draw [-] (c) -- (f) node[pos=0.4, left]{\(B_{78}\)};
            \draw [-] (c) -- (g) node[pos=0.4, right]{\(B_{79}\)};
            \draw [-] (g) -- (k) node[pos=0.52, left]{\(B_{9,10}\)};
            \draw [-] (g) -- (l) node[pos=0.52, right]{\(B_{9,11}\)};
            \draw [-] (e) -- (h) node[pos=0.5, left]{\(B_{34}\)};
            \draw [-] (e) -- (i) node[pos=0.5, right=-.1]{\(B_{35}\)};
            \draw [-] (e) -- (j) node[pos=0.5, right]{\(B_{36}\)};
            \node[circle] (r) at +(-3,0) {c)};
		\end{scope}

        \begin{scope}[shift={(8,-4.2)}] 
			\tikzstyle{every circle node} = []
			\node[circle] (a) at (0, 0) {\(X_0\)};
			\node[circle] (r) at +(-1,0) {d)};
			\node[circle] (b) at +(0: 1.7) {\(X_2\)};
			\node[circle] (c) at +(0: 3.4) {\(X_4\)};
			\node[circle] (d) at +(0: 5.1) {\(X_6\)};
            \node[circle] (ya) at (a |- 0,-1.5) {\(X_1\)};
            \node[circle] (yb) at (b |- 0,-1.5) {\(X_3\)};
            \node[circle] (yc) at (c |- 0,-1.5) {\(X_5\)};
            \node[circle] (yd) at (d |- 0,-1.5) {\(X_7\)};
			\draw [-] (a) -- (b) node[pos=0.5,above]{\(B_{02}\)};
			\draw [-] (b) -- (c) node[pos=0.5,above]{\(B_{24}\)};
			\draw [-] (c) -- (d) node[pos=0.5,above]{\(B_{46}\)};
            \draw [-] (a) -- (ya) node[pos=0.5,right]{\(B_{01}\)};
			\draw [-] (b) -- (yb) node[pos=0.5,right]{\(B_{23}\)};
			\draw [-] (c) -- (yc) node[pos=0.5,right]{\(B_{45}\)};
            \draw [-] (d) -- (yd) node[pos=0.5,right]{\(B_{67}\ \ \cdots\)};
		\end{scope}
	\end{tikzpicture}
	\caption{Examples of Markov tree distributions:
 a) A Markov process in discrete time, where the underlying tree is a chain.
 b) Random variables \(X_1,\ldots,X_d\) that are conditionally independent given the common factor variable \(X_0,\) where the underlying tree has a star-like structure. 
 c) A tree-indexed Markov process with a general underlying tree structure.
 d) A hidden Markov model with hidden nodes $(X_{i})_{i\in 2\mathbb N_0}$ and observable nodes $(X_i)_{i\in 2\mathbb N_0+1}$.
 Each model is uniquely determined by the univariate marginal distributions specifying the nodes and by the bivariate copulas \((B_{ij})_{(i,j)\in E}\) specifying the edges of the underlying tree \(T=(N,E);\) see Proposition \ref{prop: existence of markov tree dirstribution}. 
 }
	\label{5dimlabDvine}
\end{figure}

Markov tree distributions are a flexible and interpretable class of models because they
admit a simple representation in terms of bivariate distributions.
More precisely,
for a directed tree \(T = (N,E)\) of nodes \(N\) and edges \(E\subset N \times N,\) 
a fundamental property of Markov tree distributions is that they can be constructed by a sequence \(F=(F_n)_{n\in N}\) of univariate distribution functions and a sequence \(B = (B_e)_{e\in E}\) of bivariate copulas. 
While \(F_n\) determines the distribution of \(X_n,\) the copula \(B_e,\) \(e = (i,j)\in E,\) describes the dependence structure between the random variables \(X_i\) and \(X_j\) that are adjacent in \(T\); see Figure \ref{5dimlabDvine} and Proposition \ref{prop: existence of markov tree dirstribution}. 
Hence, Markov tree distributions are pair-copula constructions---and analyzing their distributional properties reduces to studying their specifications.
We write \(\cM(F,T,B)\) for the Markov tree distribution with these specifications.
Note that Markov tree distributions are the cornerstone of vine copulas, which have become very popular in the last decade \cite{Bedford-Cooke-2002,Czado-2019,Czado-2022}. More precisely, if Lebesgue densities exist, Markov tree distribution are vine copula models with only one layer.

For continuous marginal distribution functions, it is clear that Markov-tree distributed random variables \((X_n)_{n\in N}\sim \cM(F,T,B)\) are independent if and only if all the bivariate copulas \(B_e\) coincide with the independence copula. Similarly, 
\((X_n)_{n\in N}\) are comonotonic if and only if all \(B_e\) coincide with the comonotonicity copula. 
The focus of this paper is to derive simple conditions on the specifications \((F_n)_{n\in N}\) and \((B_e)_{e\in E}\) to model and order positive dependencies between the extreme cases of independence and comonotonicity.

We contribute to the literature on positive dependence orderings as follows.
\begin{enumerate}[1.]
    \item Our main result, Theorem \ref{thm: supermodular order of tree specifications generalized}, compares Markov tree distributions in supermodular order. In particular, it extends \eqref{eqbivcase} to conditionally independent models along tree structures noting that
    \((X_n) \leq_{sm} (Y_n)\) implies\footnote{We generally assume integrable random variables so that a comparison of the aggregated components in convex order makes sense.}
    \begin{align*}
    \max_{ n} \{X_n\} \geq_{st} \max_{n} \{Y_n\}\quad \text{and} 
     \quad \sum_{n} X_n \leq_{cx} \sum_{n} Y_n.
\end{align*}
    The non-intuitive positive dependence assumptions and the ordering conditions in Theorem \ref{thm: supermodular order of tree specifications generalized} can easily be checked because they rely only on the bivariate distributions related to the edges of the underlying tree; see Remark \ref{remdisasu} \eqref{remdisasu1} and \eqref{remdisasu2}.
    In Corollary \ref{corsm}, we formulate a version of our main result based on the bivariate copula specifications.
    \item In Theorem \ref{thm: maintwo}, we additionally allow for flexibility of the marginal distributions in stochastic order and establish a comparison of Markov tree distributions with respect to the increasing supermodular order. 
    This ordering continues to imply first- and second-order stochastic dominance results for extreme value statistics and sums of random variables; see \eqref{eqminmaxsum}.
    Similarly, Theorem \ref{themaindcx} permits flexibility of the marginal distributions in convex order, leading to a comparison result in the directionally convex order. The latter order implies a comparison of the aggregated components in convex order.
    \item Theorem \ref{thm: supermodular order of tree specifications generalized} is general in the sense that none of the positive dependence conditions can be omitted or weakened to positive supermodular dependence; see Proposition \ref{prop: justification of the SI assumptions}. Hence, our results reveal new insights into ordering properties of multidimensional distributions.
    Theorem \ref{thm: supermodular order of tree specifications generalized} also allows us to compare some negative dependencies, as not all bivariate dependence specifications are required to satisfy stochastic monotonicity conditions. 
    In Section \ref{secSM}, we investigate a particular property of star structures, for which arbitrary dependence specifications can be compared with positive dependencies in supermodular order; see Proposition \ref{propSch}. 
    Interestingly, the underlying ordering conditions are consistent with dependence measures such as Chatterjee's rank correlation \cite{Ansari-Langthaler-2025,chatterjee2021}.
    As we show, this ordering result for star structures does not extend to chains and, consequently, cannot be generalized to Markov tree distributions.
    \item Our contributions admit various applications to Markovian structures, including hidden Markov models, as we study in Section \ref{secHidMarkov}. Corollary \ref{thmHMM} is a version of our comparison results in the context of hidden Markov models, where the observations are modeled as conditionally independent given the latent process. 
    As an illustrative example, we consider the maximum of the observations and study its distributional robustness in stochastic order; see Theorem \ref{thedisrw}. For simplicity, the hidden process is modeled as a random walk---a setting commonly referred to as a perturbed random walk. 
    While the existing literature on maxima of perturbed random walks focuses on their asymptotic behavior \cite{Araman-2006,Iksanov-2016,Iksanov-2025}, our robustness results pertain to a finite number of observations.
\end{enumerate}

\subsection{Main results}

To motivate the non-intuitive positive dependence assumptions in Theorem \ref{thm: supermodular order of tree specifications generalized} below, we first formulate comparison results for chain and star structures, both of which are based on asymmetric positive dependence conditions.

Regarding the notation, 
we write \(V\uparrow_{st} U\) for random variables \(U\) and \(V,\) if \(V\) is stochastically increasing (SI) in \(U;\) see \eqref{defconin}. 
The supermodular order \(X\leq_{sm} Y\) is defined by \(\E f(X)\leq \E f(Y)\) for all \(b\)-bounded, supermodular functions \(f\); see Definition \ref{defdepord}, where a function \(f\colon \R^d \to \R\) is said to be \emph{supermodular} if \(f(x)+f(y) \leq f(x\wedge y) + f(x\vee y)\) for all \(x,y\in \R^n.\) Here, \(\wedge\) and \(\vee\) denote the componentwise minimum and maximum, respectively.
Further, a random vector is said to be positive supermodular dependent (PSMD) if it exceeds an independent random vector in supermodular order; see Definition \ref{defdepnotcop}.
The following result is a direct extension of \cite[Theorem 3.2]{Hu-2000} from stationary discrete-time Markov processes to a non-stationary setting.
In the language of Markov tree distributions, the underlying tree structure corresponds to a chain of nodes; see Figure \ref{5dimlabDvine}a).

\begin{proposition}[Supermodular ordering of Markov processes]\label{propMKprocess}~\\
    Let $X=(X_i)_{i\in \N_0}$ and $Y=(Y_i)_{i\in \N_0}$ be Markov processes 
    in discrete time. Assume for all \(i\in \N_0\) that 
    \begin{enumerate}[(i)]
        \item \label{propMKprocess1} \(X_{i+1}\uparrow_{st}X_i\),
        \item \label{propMKprocess2} \(Y_{i}\uparrow_{st} Y_{i+1},\)
        \item \label{propMKprocess3} \((X_i,X_{i+1})\leq_{sm} (Y_i,Y_{i+1})\) (resp.\@ $ \geq_{sm}$).
    \end{enumerate}
    Then it follows that $X\leq_{sm} Y $ (resp.\@ $ \geq_{sm}$). In particular, \(X\) and \(Y\) are PSMD.
\end{proposition}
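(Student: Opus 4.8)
The plan is to reduce the infinite-dimensional statement to a finite-window comparison and then interpolate one transition kernel at a time. First I would observe that condition \ref{propMKprocess3}, applied to each consecutive pair, forces $X_i \eqd Y_i$ for every $i$, so the common marginals $\mu_i := \law(X_i) = \law(Y_i)$ are fixed and the assertion is a pure dependence comparison. Since the supermodular order on sequences is generated by $b$-bounded supermodular functions depending on finitely many coordinates and is closed under taking subvectors, it suffices to prove $(X_0,\dots,X_n) \leq_{sm} (Y_0,\dots,Y_n)$ for every $n\in\N$, the infinite case following by approximation and marginalization.

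For the finite window I would build a telescoping family of auxiliary Markov chains $Z^{(0)},\dots,Z^{(n)}$, all with marginals $\mu_0,\dots,\mu_n$, where $Z^{(k)}$ uses the transition kernels of $Y$ for the first $k$ steps and those of $X$ for the remaining steps. Because $X$ and $Y$ share the marginals $\mu_i$, swapping any single kernel preserves every one-dimensional marginal, so $Z^{(0)} = (X_0,\dots,X_n)$ and $Z^{(n)} = (Y_0,\dots,Y_n)$. The goal thus reduces to the one-step comparison $Z^{(k)} \leq_{sm} Z^{(k+1)}$, since consecutive chains differ only in the kernel governing the transition $k \to k+1$ (the $X$-kernel for $Z^{(k)}$, the $Y$-kernel for $Z^{(k+1)}$), while sharing the past kernels (the $Y$-kernels on steps $0,\dots,k-1$) and the future kernels (the $X$-kernels on steps $k+1,\dots,n-1$).

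To establish the one-step comparison I would fix a $b$-bounded supermodular $f$ and condition on the pair $(W_k,W_{k+1})$, writing $W$ for either $Z^{(k)}$ or $Z^{(k+1)}$. By the Markov property the past $(W_0,\dots,W_{k-1})$ and the future $(W_{k+2},\dots,W_n)$ are conditionally independent given $(W_k,W_{k+1})$, and because the past and future kernels coincide for the two chains, the integrated function
\[
\Phi(a,b) := \E\big[f(W_0,\dots,W_{k-1},a,b,W_{k+2},\dots,W_n)\mid W_k=a,\,W_{k+1}=b\big]
\]
is the same for $Z^{(k)}$ and $Z^{(k+1)}$. Hence $\E f(Z^{(k)}) = \E\,\Phi(X_k,X_{k+1})$ and $\E f(Z^{(k+1)}) = \E\,\Phi(Y_k,Y_{k+1})$, so the step follows from $(X_k,X_{k+1}) \leq_{sm} (Y_k,Y_{k+1})$ — precisely condition \ref{propMKprocess3} — provided $\Phi$ is supermodular in $(a,b)$.

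The crux, and the step I expect to be the main obstacle, is the supermodularity of $\Phi$, and this is exactly where the two asymmetric monotonicity hypotheses enter. The backward conditional law of the past given $W_k=a$ is built from the $Y$-kernels and is stochastically increasing in $a$ precisely because $Y_i \uparrow_{st} Y_{i+1}$ (condition \ref{propMKprocess2}), while the forward conditional law of the future given $W_{k+1}=b$ is built from the $X$-kernels and is stochastically increasing in $b$ precisely because $X_{i+1}\uparrow_{st} X_i$ (condition \ref{propMKprocess1}); in each case one upgrades one-step stochastic monotonicity to multivariate stochastic monotonicity of the whole conditional vector by a monotone coupling along the chain. Granting these two facts, $\Phi$ arises from the supermodular $f$ by integrating out the past against a kernel increasing in $a$ and the future against a kernel increasing in $b$, and the standard computation that integrating a supermodular function against a kernel stochastically increasing in its own conditioning argument preserves supermodularity — applied once per block — gives that $\Phi$ is supermodular. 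Chaining $Z^{(0)} \leq_{sm} \cdots \leq_{sm} Z^{(n)}$ and letting $n\to\infty$ yields $X\leq_{sm} Y$; reversing the inequality in \ref{propMKprocess3} flips every pairwise step and delivers the $\geq_{sm}$ variant verbatim. Finally, the PSMD claim follows by specialization: running the same interpolation between the independent chain $X^\perp$ (independence kernels, trivially stochastically increasing) and $X$ gives $X^\perp \leq_{sm} X$ using only condition \ref{propMKprocess1}, and symmetrically $Y^\perp \leq_{sm} Y$ using only condition \ref{propMKprocess2}, where the base pairwise dominance over the independent coupling holds because stochastic increasingness implies positive quadrant dependence.
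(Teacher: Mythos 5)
Your proof is correct and takes essentially the same route as the paper: Proposition~\ref{propMKprocess} is obtained there as the chain specialization of Theorem~\ref{thm: supermodular order of tree specifications generalized}, whose proof consists of exactly your ingredients --- a specification-by-specification telescoping interpolation, disintegration around the edge being swapped, supermodularity of the integrated function $\Phi$ via Lemma~\ref{lem: supermodular} (your ``standard computation''), the one-step-to-multivariate SI upgrade of Lemma~\ref{lem: stochasticallyincreasing}, and the bivariate supermodular comparison at the swapped edge. Your PSMD argument (self-application against the independent chain through the $\geq_{sm}$ variant, with SI implying PQD for the bivariate base case) is likewise the paper's.
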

Assumptions \eqref{propMKprocess1} -- \eqref{propMKprocess3} in the above proposition refer only to bivariate distributions and are therefore easy to verify; see Remark \ref{remdisasu}\,\eqref{remdisasu1} and \eqref{remdisasu2}. 
Due to the first two conditions,  
\(X\) and \(Y\) are SI in 'opposite directions'. As we discuss in the Proposition \ref{prop: justification of the SI assumptions} and Example \ref{exp:a}, these rather non-intuitive positive dependence assumptions cannot be weakened to PSMD, nor can they be replaced with SI in the 'same direction'. 
Note that, under the assumptions of Proposition \ref{propMKprocess}, the entire processes \(X\) and \(Y\) are PSMD.

For the proof of our main result, we use the following recently established ordering result for factor models where conditionally independent random variables are compared with respect to the supermodular order; see \cite[Corollary 4(i)]{Ansari-Rueschendorf-2023}. Such factor models can be interpreted as Markov tree distributions where the underlying tree has a star-like structure; see Figure \ref{5dimlabDvine}b).
\begin{lemma}[Supermodular ordering of Markovian star structures]\label{lemfacmod}~\\
Let \(X=(X_0,\ldots,X_d)\) and \(Y=(Y_0,\ldots,Y_d)\) be random vectors. Assume that \(X_1,\ldots,X_d\) are conditionally independent given \(X_0\), and that \(Y_1,\ldots,Y_d\) are conditionally independent given \(Y_0.\) Assume for all \(i\in \{1,\ldots,d\}\) that
\begin{enumerate}[(i)]
    \item\label{lemfacmod1} \(X_i\uparrow_{st} X_0,\) 
    \item\label{lemfacmod2} \(Y_i\uparrow_{st} Y_0,\) 
    \item\label{lemfacmod3} \((X_0,X_i)\leq_{sm} (Y_0,Y_i).\)
\end{enumerate}
Then it follows that \(X\leq_{sm} Y.\) In particular, \(X\) and \(Y\) are PSMD. 
\end{lemma}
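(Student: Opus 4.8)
The plan is to condition on the common factor and reduce everything to bivariate comparisons by a one-component-at-a-time replacement. First I would record that the supermodular order forces equal univariate margins, so assumption \eqref{lemfacmod3} already gives \(\law(X_j)=\law(Y_j)\) for every \(j\in\{0,\ldots,d\}\); since \(\leq_{sm}\) is invariant under componentwise increasing transformations, I may assume all margins are continuous and identical, and describe both models through the common factor law \(\mu=\law(X_0)=\law(Y_0)\) and the conditional kernels \(K_i^X(x_0,\cdot)=\law(X_i\mid X_0=x_0)\) and \(K_i^Y(x_0,\cdot)=\law(Y_i\mid Y_0=x_0)\). Conditional independence then yields, for a supermodular \(f\),
\[
\E f(X)=\int_{\R}\Big(\int_{\R^d} f(x_0,x_1,\ldots,x_d)\,\textstyle\prod_{i=1}^d K_i^X(x_0,dx_i)\Big)\,\mu(dx_0),
\]
and likewise for \(Y\). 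The SI assumptions \eqref{lemfacmod1} and \eqref{lemfacmod2} translate into the statement that \(x_0\mapsto K_i^X(x_0,\cdot)\) and \(x_0\mapsto K_i^Y(x_0,\cdot)\) are stochastically increasing.

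Next I would set up a hybrid chain. For \(0\le k\le d\) let \(H^{(k)}\) be the factor model over the same factor law \(\mu\) using the \(Y\)-kernels for children \(1,\ldots,k\) and the \(X\)-kernels for children \(k+1,\ldots,d\), so that \(H^{(0)}\eqd X\) and \(H^{(d)}\eqd Y\). The goal becomes the telescoping chain \(\E f(H^{(0)})\le \E f(H^{(1)})\le\cdots\le\E f(H^{(d)})\). To compare the consecutive terms \(H^{(k-1)}\) and \(H^{(k)}\), which differ only in the kernel used for child \(k\), I would integrate out all the other children first, producing a function \(\hat f_k(x_0,x_k)\). One then has \(\E f(H^{(k-1)})=\E\hat f_k(X_0,X_k)\) and \(\E f(H^{(k)})=\E\hat f_k(Y_0,Y_k)\), and the desired step follows from the bivariate comparison \eqref{lemfacmod3} \emph{provided} \(\hat f_k\) is supermodular.

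The main obstacle is precisely this preservation of supermodularity, which is where the SI assumptions do their work. The key lemma I would establish is that if \(g\) is supermodular and \(K(x_0,\cdot)\) is a stochastic kernel that is stochastically increasing in \(x_0\), then integrating out that child variable,
\[
\tilde g(\ldots,x_0,\ldots)=\int g(\ldots,x_0,\ldots,y)\,K(x_0,dy),
\]
is again supermodular in the remaining arguments. For smooth \(g\) this reduces to checking mixed second differences: the cross term \(\partial_{x_0}\partial_{x_j}\tilde g\) splits into \(\int \partial_{x_0}\partial_{x_j}g\,K(x_0,dy)\ge 0\) (joint supermodularity of \(g\)) plus a kernel-derivative term that is nonnegative because \(y\mapsto \partial_{x_j}g\) is increasing and the kernel is SI in \(x_0\); cross terms between two retained children pass through the integral unchanged. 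The general case follows by a standard monotone-class/approximation argument. Applying this lemma repeatedly, using \eqref{lemfacmod1} to integrate out the \(X\)-children \(k+1,\ldots,d\) and \eqref{lemfacmod2} to integrate out the \(Y\)-children \(1,\ldots,k-1\), shows that each \(\hat f_k\) is supermodular, so the telescoping chain holds and \(\E f(X)\le\E f(Y)\). By the density of smooth supermodular functions it suffices to treat \(f\) of this regularity, which closes the argument for \(X\le_{sm}Y\).

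Finally, for the PSMD assertion I would apply the comparison just proved with the lower model replaced by the independent vector \(X^{\perp}\) having the same margins (all kernels degenerate to products, which are trivially SI). Since \(X_i\uparrow_{st}X_0\) implies that \((X_0,X_i)\) is positive quadrant dependent, the independence copula is dominated in \(\leq_{sm}\) by the copula of \((X_0,X_i)\), so assumption \eqref{lemfacmod3} holds with \(X^{\perp}\) as the lower model and yields \(X^{\perp}\le_{sm}X\); the same argument applies to \(Y\).
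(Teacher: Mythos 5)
Your proof is correct, but it cannot be compared line-by-line with the paper's, for the simple reason that the paper never proves this lemma: it is imported wholesale from \cite[Corollary 4(i)]{Ansari-Rueschendorf-2023}, where it arises as a special case of copula-based comparison results for general factor models. Your argument is therefore a genuinely different, self-contained route --- and, interestingly, it reconstructs in miniature exactly the machinery the paper deploys one level up, for trees: your hybrid chain \(H^{(0)},\ldots,H^{(d)}\) is the star-shaped analogue of the interpolating copula families \(D^k\) in the proof of Proposition \ref{thm: supermodular order of tree specifications}, and your key lemma (integrating out a child along a kernel that is SI in the factor preserves supermodularity) is precisely the star case of Lemma \ref{lem: supermodular}, which in turn extends \cite[Theorem 3.1]{Hu-2000}. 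What your route buys is an elementary, kernel-level proof with no copula or rearrangement machinery; what the cited route buys is strictly more, namely the Schur-order strengthening recorded as Proposition \ref{propSch}, in which the SI hypothesis on \(X\) is dropped entirely --- your telescoping cannot reach that statement, because the supermodularity of \(\hat f_k\) is exactly where the SI of the \(X\)-kernels gets consumed. A pleasant by-product of your argument, worth stating explicitly: in step \(k\) you never integrate out child \(k\) itself, so the \(X\)-kernel of the first-switched child and the \(Y\)-kernel of the last-switched child never need to be SI; this is the sharpening that the paper notes in Remark \ref{remdisasu} and builds into Theorem \ref{thm: supermodular order of tree specifications generalized} via the exempted nodes \(k^*\) and \(\ell\).

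Two blemishes, neither fatal. First, the opening reduction ``I may assume all margins are continuous and identical'' is unjustified --- no increasing transformation turns a discrete marginal into a continuous one --- but it is also never used: everything downstream works for arbitrary marginals and regular conditional kernels, so simply delete it. Second, your verification of the key lemma is the weak point: differentiating the kernel in \(x_0\) presumes regularity you have not assumed, and the smooth case does not iterate (after the first integration \(\hat f_k\) is no longer smooth, yet subsequent integrations need the lemma again), so neither monotone-class nor mollification arguments patch this cleanly. The right fix is the two-step second-difference argument: for \(x_0<x_0'\) and \(z_j<z_j'\), first use that \(y\mapsto g(x_0',z_j',y)-g(x_0',z_j,y)\) is increasing (supermodularity of \(g\) in \((z_j,y)\)) together with \(K(x_0',\cdot)\geq_{st}K(x_0,\cdot)\), and then use supermodularity of \(g\) in \((x_0,z_j)\) pointwise in \(y\) under the fixed measure \(K(x_0,\cdot)\). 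This requires no smoothness whatsoever and is exactly how the paper proves Lemma \ref{lem: supermodular}.
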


While for chain structures, \(X\) and \(Y\) are required to satisfy opposite SI conditions, it is important to emphasize that the above result demands \(X\) and \(Y\) to fulfill identical SI conditions; see Example \ref{exp:b}.

To extend Proposition \ref{propMKprocess} and Lemma \ref{lemfacmod} to Markov tree distributions, it becomes necessary to combine the SI conditions in a suitable way. 
To this end, let $T=(N,E)$ be a directed tree with root \(0\in N\), which may have finitely or countably many nodes; see Figure \ref{5dimlabDvine}c).
Denote by \(L\subset N\) the set of leaves of \(T.\) 
Let \(P = \{\ell_1,\ell_2,\ell_3\ldots\}\subseteq N\setminus\{0\} \) with \((0,\ell_1),(\ell_1,\ell_2),(\ell_2,\ell_3),\ldots \in E\) be a path of nodes that starts with a child of the root and
either terminates at a leaf node \(\ell\in L\) or has infinitely many nodes.
 Further, let \(k^*\) be a child of the root \(0\) that does not belong to the path \(P\), unless it is the only child, i.e., \((0,k^*)\in E\)  with  $k^*\notin P$ whenever $\operatorname{deg}(0)\geq 2$.
 We refer to Section \ref{secdeftree} and 
 Definition \ref{def:childs_descendants_ancestors} for a formal specification of the notation and the terms.
 The following theorem is our main result.

\begin{theorem}[Supermodular ordering of Markov tree distributions]\label{thm: supermodular order of tree specifications generalized}~\\
Let $X=(X_n)_{n\in N}$ and $Y=(Y_n)_{n\in N}$ be sequences  of random variables that follow a Markov tree distribution with respect to $T$. Assume for all $e=(i,j)\in E$ that
	\begin{enumerate}[(i)]
        \item \label{thm11} \(X_j\uparrow_{st} X_i\) if \(j\ne k^*,\)
        \item \label{thm12} \(Y_i \uparrow_{st} Y_j\) if \(j \notin L,\) and \(Y_j\uparrow_{st} Y_i\) if \(j\notin P,\)
        \item \label{thm13} $(X_i,X_j) \leq_{sm}	(Y_i,Y_j)$ (resp.\@ $ \geq_{sm}$).
	\end{enumerate}
Then it follows that $ X \leq_{sm} Y $ (resp.\@ $ \geq_{sm}$). If additionally $(X_0,X_{k^*})$ is PSMD, then \(X\) is PSMD. Moreover, if additionally $(Y_i,Y_j)$ is PSMD for $(i,j)\in E$ with $j\in P\cap L,$ then  \(Y\) is PSMD.
\end{theorem}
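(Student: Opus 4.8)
The plan is to prove Theorem \ref{thm: supermodular order of tree specifications generalized} by inducting on the structure of the tree, peeling off leaves or subtrees and reducing the Markov tree distribution to the two base cases already available: the chain result (Proposition \ref{propMKprocess}) and the star result (Lemma \ref{lemfacmod}). The key structural fact I would exploit is the pair-copula representation from Proposition \ref{prop: existence of markov tree dirstribution}: a Markov tree distribution $\cM(F,T,B)$ is determined by its univariate margins $(F_n)_{n\in N}$ and its edge copulas $(B_e)_{e\in E}$, and conditioning on a node splits the tree into independent subtrees rooted at its children. This \emph{Markov branching property} is what lets the supermodular order be built up edge-by-edge.

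The core of the argument is a gluing step. Concretely, I would fix the distinguished path $P=\{\ell_1,\ell_2,\ldots\}$ and the extra child $k^*$ of the root, and process the tree as follows. First, along the path $P$ itself, conditions \eqref{thm11} and \eqref{thm12} supply exactly the opposite-direction SI conditions ($X_j\uparrow_{st} X_i$ and $Y_i\uparrow_{st}Y_j$) needed to invoke the chain comparison of Proposition \ref{propMKprocess}. Second, at each node $i$ on the path (or in the tree), the children of $i$ other than the one continuing the path behave like the spokes of a star centred at $i$; here the SI conditions in \eqref{thm11}--\eqref{thm12} (now in the \emph{same} direction, $X_j\uparrow_{st}X_i$ and $Y_j\uparrow_{st}Y_i$ for $j\notin P$) are precisely the hypotheses of the star Lemma \ref{lemfacmod}. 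The edgewise bivariate comparison \eqref{thm13} then feeds the $(X_i,X_j)\leq_{sm}(Y_i,Y_j)$ input into both base cases. The plan is to formalize an induction in which one removes a leaf (or a maximal subtree hanging off the path) and applies a closure property of the supermodular order under conditionally independent extension, so that adding back a branch governed by a single edge preserves $\leq_{sm}$.

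The technical engine making the induction work is a conditioning/mixture lemma: if $(X_{\text{core}})\leq_{sm}(Y_{\text{core}})$ on an already-processed subtree, and a new node $X_j$ is attached to an existing node $X_i$ via edge copula $B_{ij}$ with the appropriate SI and bivariate-$\leq_{sm}$ conditions holding, then the extended vector remains supermodular ordered. This is established by writing the supermodular integral as an iterated expectation over the conditional law of $X_j$ given $X_i$ (using conditional independence of $X_j$ from the rest given $X_i$), and using that integrating a supermodular function against stochastically ordered, SI-coupled kernels preserves supermodularity—this is morally the same computation underlying the proofs of Proposition \ref{propMKprocess} and Lemma \ref{lemfacmod}, applied one edge at a time. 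The directions of the SI conditions dictate whether one integrates ``from the root outward'' or ``from a leaf inward,'' which is exactly why the hypotheses split into the $j\ne k^*$, $j\notin L$, and $j\notin P$ cases.

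For the PSMD addenda, I would compare $X$ (resp.\ $Y$) against an auxiliary independent reference vector with the same margins, realized as a degenerate Markov tree distribution whose edge copulas are all the independence copula. Since the independent vector trivially satisfies all SI conditions, the main ordering conclusion gives $X^{\perp}\leq_{sm} X$ provided the edge copulas of $X$ dominate independence edgewise. The extra assumption that $(X_0,X_{k^*})$ is PSMD patches the one edge ($k^*$) whose SI condition was deliberately dropped in \eqref{thm11}, and symmetrically the assumption that $(Y_i,Y_j)$ is PSMD for the terminal path-leaf edges patches the edges whose SI direction is unconstrained by \eqref{thm12}; these are exactly the edges where the main theorem's monotonicity does not by itself force domination over independence. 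The main obstacle I anticipate is bookkeeping the mixed SI directions consistently through the induction—ensuring at every attachment step that the relevant kernel is monotone in the correct direction so that the base lemmas apply verbatim—and handling the countable case by a limiting argument, passing the finite-subtree supermodular inequalities to the limit using that the supermodular order is closed under convergence in distribution of the relevant finite-dimensional marginals.
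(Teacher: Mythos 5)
Your proposal is correct in outline and takes essentially the same route as the paper: a piece-by-piece copula-replacement (hybrid) argument organized around the decomposition of \(T\) into the path \(P\), the distinguished child \(k^*\), and the off-path star structures, powered by a supermodular conditional-integration lemma (the paper's Lemma \ref{lem: supermodular}) and the factor-model Lemma \ref{lemfacmod}, with the PSMD addenda obtained exactly as you describe by applying the main comparison to \(X\) versus \(X^{\perp}\) (resp.\ \(Y\) versus \(Y^{\perp}\)). The ``gluing'' step you sketch is not a free-standing closure property but precisely the two-stage comparison the paper formalizes via its intermediate Markov tree distributions \(Z^{(k)}\) (first Proposition \ref{thm: supermodular order of tree specifications} under full SI assumptions, then the alternating path-edge/subtree replacements in the proof of Theorem \ref{thm: supermodular order of tree specifications generalized}), so your anticipated ``bookkeeping of mixed SI directions'' is exactly where the paper's remaining work lies.
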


The non-intuitive SI conditions in Theorem \ref{thm: supermodular order of tree specifications generalized} are illustrated in Figure \ref{fig: example main theorem}.
A detailed discussion of the assumptions provided in Appendix \ref{secSI} proves the generality of the above theorem and establishes that none of the SI conditions can be omitted or relaxed to PSMD. 
The technical proof of Theorem \ref{thm: supermodular order of tree specifications generalized}, presented in Appendix \ref{appendix: maintheorem}, builds on the proof for ordering stationary Markov processes in \cite{Hu-2000} and extends it by incorporating star structures. This requires a series of auxiliary results and additional notation to navigate appropriately through the tree. 
Note that Theorem \ref{thm: supermodular order of tree specifications generalized} compares dependence structures of Markov tree distributions for arbitrary univariate marginals. Related results for Poisson and Binomial random vectors are given in \cite{Cote-2024,Kizildemir-Privault-2018}.

As a consequence of the above result, the following corollary provides simple sufficient conditions on the bivariate copula specifications for a supermodular comparison of Markov tree distributions. We denote by \(\leq_{lo}\) the lower orthant (i.e., pointwise) order of distribution functions.
Further, a bivariate distribution function is said to be SI (CI) if there exists a bivariate random vector \((U,V)\) having this distribution function such that \(V\uparrow_{st} U\) (and \(U\uparrow_{st} V\)); see Definition \ref{defdepnotcop}. 

 \begin{corollary}[Supermodular ordering based on bivariate tree specifications]\label{corsm}~\\
       	Let $X\sim\cM(F,T,B)$ and $Y\sim\cM(F,T,C).$ Assume for all $e\in E$ that
	\begin{enumerate}[(i)]
		\item \label{corsm1} $B_{e}$\text{ is \textnormal{SI},}
		\item \label{corsm2} $C_{e}$\text{ is \textnormal{CI},}
		\item \label{corsm3} $B_{e} \leq_{lo}	C_{e}$ (resp.\@ $\geq_{lo}$).
	\end{enumerate}
	Then it follows that $X\leq_{sm}Y$ (resp.\@ $\geq_{sm}$).  In particular, \(X\) and \(Y\) are PSMD.
   \end{corollary}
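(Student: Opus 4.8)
The plan is to deduce Corollary~\ref{corsm} directly from the main theorem, Theorem~\ref{thm: supermodular order of tree specifications generalized}, by translating the copula-level hypotheses \eqref{corsm1}--\eqref{corsm3} into the random-variable-level hypotheses \eqref{thm11}--\eqref{thm13}. First I would observe that since $X\sim\cM(F,T,B)$ and $Y\sim\cM(F,T,C)$ share the same marginal specifications $F=(F_n)_{n\in N}$, the pair $(X_i,X_j)$ for an edge $e=(i,j)\in E$ has copula $B_e$ with marginals $F_i,F_j$, and likewise $(Y_i,Y_j)$ has copula $C_e$ with the \emph{same} marginals. This is the content of Proposition~\ref{prop: existence of markov tree dirstribution}: the edge copulas together with the node marginals determine the bivariate distributions along edges.

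The key step is to verify each hypothesis of the theorem. For the stochastic-monotonicity conditions, I would use the fact that the SI/CI property of a bivariate random vector depends only on its copula and is invariant under increasing transformations of the margins. Thus $B_e$ being \textnormal{SI} means there is a representation in which the second coordinate is stochastically increasing in the first; applying the quantile transforms $F_i^{-1},F_j^{-1}$ preserves this, so $X_j\uparrow_{st}X_i$ holds for \emph{every} edge, which in particular covers the requirement \eqref{thm11} (for $j\neq k^*$). Symmetrically, $C_e$ being \textnormal{CI} gives both $Y_j\uparrow_{st}Y_i$ and $Y_i\uparrow_{st}Y_j$ for every edge of $Y$, which is stronger than \eqref{thm12} and therefore certainly implies it. For the ordering condition \eqref{thm13}, I would invoke the equivalence, in the bivariate case, between the supermodular order and the lower orthant order on a common Fr\'echet class: since $X$ and $Y$ share marginals, $(X_i,X_j)$ and $(Y_i,Y_j)$ lie in the same Fr\'echet class, and $B_e\leq_{lo}C_e$ is exactly $(X_i,X_j)\leq_{lo}(Y_i,Y_j)$, which in two dimensions coincides with $\leq_{sm}$ (this is the Tchen-type equivalence referenced around \eqref{eqbivord}/\eqref{eqbivcase}). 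Hence \eqref{corsm3} yields \eqref{thm13}.

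With all three hypotheses of Theorem~\ref{thm: supermodular order of tree specifications generalized} verified, its conclusion gives $X\leq_{sm}Y$ directly. For the PSMD addendum, I would check the two extra conditions from the theorem: since every $B_e$ is \textnormal{SI}, in particular $(X_0,X_{k^*})$ is \textnormal{SI} and hence PSMD (an SI vector is PSMD), giving that $X$ is PSMD; and since every $C_e$ is \textnormal{CI}, in particular $(Y_i,Y_j)$ is CI---hence PSMD---for the relevant leaf edges $j\in P\cap L$, giving that $Y$ is PSMD. The resp.\ $\geq_{sm}$ statement follows identically by reversing \eqref{corsm3}.

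The main obstacle, and the only point requiring genuine care rather than bookkeeping, is the direction-matching in condition~\eqref{thm12}: the theorem imposes \emph{two different} monotonicity directions on $Y$ along edges depending on whether $j\in L$ or $j\in P$, whereas the corollary's hypothesis supplies only the single symmetric notion ``$C_e$ is \textnormal{CI}.'' I expect this to be unproblematic precisely because CI is the \emph{two-sided} stochastic-monotonicity property ($V\uparrow_{st}U$ \emph{and} $U\uparrow_{st}V$), so it simultaneously furnishes both required directions for every edge and a fortiori satisfies the selective requirement~\eqref{thm12}; the reason the theorem states the weaker, asymmetric hypotheses is to retain the generality needed there. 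I would make this two-sidedness explicit (citing Definition~\ref{defdepnotcop}) so that the reduction to the theorem is transparent, and I would note that the marginal invariance of the copula-based notions is what lets me pass freely between the copula specifications $B_e,C_e$ and the induced bivariate laws of $X$ and $Y$.
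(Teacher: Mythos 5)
Your proposal is correct and follows essentially the same route as the paper's own proof: both reduce Corollary \ref{corsm} to Theorem \ref{thm: supermodular order of tree specifications generalized} by exploiting that SI/CI are copula-based properties invariant under increasing (quantile) transformations of the margins, and that for bivariate vectors with common marginals the condition $B_e\leq_{lo}C_e$ is equivalent to $(X_i,X_j)\leq_{sm}(Y_i,Y_j)$ via \eqref{eqbivord}, after which the theorem applies directly. Your explicit verification of the PSMD addendum (SI implies PSMD for $(X_0,X_{k^*})$, and CI implies PSMD for the edges with $j\in P\cap L$) is actually spelled out more carefully than in the paper's proof, which leaves that part implicit.
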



\begin{figure}[t]
\centering
\begin{tikzpicture}[level distance=12mm,
  level 1/.style={sibling distance=25mm},
  level 2/.style={sibling distance=15mm},
  level 3/.style={sibling distance=10mm}]
  \node {$X_0$}
    child[->]  {node {$X_1$}
      child {node {$X_2$}}
      child {node {$X_3$}
        child {node {$X_4$}}
        child {node {$X_5$}}
        child {node {$X_6$}} 
        }
    }
    child  {node {$X_7$}
      child[->] {node {$X_8$}}
      child[->] {node {$X_{9}$}
        child {node {$X_{10}$}}
        child {node {$X_{11}$}}} 
    };
\end{tikzpicture}
\quad
\begin{tikzpicture}[level distance=12mm,
  level 1/.style={sibling distance=25mm},
  level 2/.style={sibling distance=15mm},
  level 3/.style={sibling distance=10mm}]
  \node {$Y_0$}
    child[<-] {node {$Y_1$}
      child[->] {node {$Y_2$}}
      child[<-] {node {$Y_3$}
        child[-] {node {$Y_4$}}
        child[->] {node {$Y_5$}}
        child[->] {node {$Y_6$}} 
        }
    }
    child[<->] {node {$Y_7$}
      child[->] {node {$Y_8$}}
      child[<->] {node {$Y_{9}$}
        child[->] {node {$Y_{10}$}}
        child[->] {node {$Y_{11}$}}} 
    };
\end{tikzpicture}
\caption{An example that illustrates the positive dependence conditions \eqref{thm11} and \eqref{thm12} in Theorem \ref{thm: supermodular order of tree specifications generalized} for a tree $T=(N,E)$ on \(12\) nodes with root \(0\).
An arrow \(U\longrightarrow V\) indicates \(V\uparrow_{st} U;\) similarly, an arrow \(U\longleftrightarrow V\) indicates \(V \uparrow_{st} U\) and \(U\uparrow_{st} V.\) 
Note that there is no positive dependence condition between \(X_0\) and \(X_7\) and between \(Y_3\) and \(Y_4.\)
The leaves consist of the set \(L=\{2,4,5,6,8,10,11\}.\) The set \(P\subseteq N\) consists of the leaf \(\ell=4\) and the path \(p(0,\ell)\) between the root \(0\) and the leaf \(\ell,\) i.e., \(P=\{1,3,4\}.\) The root's child $k^*$ is given by $k^*=7$.
}\label{fig: example main theorem}
\end{figure}

\begin{remark}\label{remdisasu}
\begin{enumerate}[(a)]
    \item \label{remdisasu1} For random variables \(U\) and \(V,\) the SI condition \(V\uparrow_{st} U\) is a positive dependence concept, satisfied by many well known bivariate distributions.
    For example, extreme value distributions \cite{Garralda-2000}, various Archimedean copulas \cite{Mueller-Scarsini-2005}, and the bivariate normal distribution for non-negative correlation \cite{Rueschendorf-1981} are SI; see \cite{Ansari-Rockel-2024} for an overview. 
    Further, the uniquely determined increasing rearrangement of a bivariate copula, recently studied in the context of dependence measures, is by construction SI; see \cite{Ansari-Rueschendorf-2021,strothmann2022}.
    Since SI random vectors are invariant under increasing transformations of the components (i.e., \(U\uparrow_{st} V\) implies \(f(U)\uparrow_{st} g(V)\) for all increasing functions \(f\) and \(g\); see \cite[Proposition 2.12]{Cai-2012}) the SI property is a copula-based dependence concept. Hence, it suffices to analyze copulas; see \eqref{eqsklar} for the notion of copula. 
    \item \label{remdisasu2} For bivariate distributions, the supermodular order can be verified by a pointwise comparison of the underlying copulas. This follows from \eqref{eqbivord} and from the invariance of the supermodular order under increasing transformations \cite[Theorem 9.A.9(a)]{Shaked-Shanthikumar-2007}.
    In contrast, for dimensions greater than \(2,\) the supermodular order is strictly stronger than the orthant orders, making a verification challenging since no small class of functions generating the supermodular order is known \cite{Mueller-Stoyan-2002}.
    This is why Theorem \ref{thm: supermodular order of tree specifications generalized} is particularly meaningful: it provides a novel method for constructing and comparing PSMD multivariate distributions based on bivariate building blocks, leveraging well known ordering results for bivariate distributions.
    For instance, in the case of the bivariate normal distribution, the supermodular order corresponds to an ordering of the correlation parameter \cite[Theorem 3.13.5]{Mueller-Stoyan-2002}; see \cite{Ansari-Rueschendorf-2021b} for an extension to elliptical distributions.
    For bivariate Archimedean copulas, a characterization of the supermodular order follows from \cite[Theorem 4.4.2]{Nelsen-2006}. For bivariate extreme value copulas, the supermodular order is characterized in \cite[Theorem 3.4]{Ansari-Rockel-2024}.
\item The SI assumption in Theorem \ref{thm: supermodular order of tree specifications generalized} are illustrated in Figure \ref{fig: example main theorem}. While \(X\) is SI from the root towards the leaves, the SI conditions for \(Y\) hold in both directions, except along the predefined path \(P\), where they are directed towards the root, and except at the leaves, where the SI conditions apply outward towards the leaves.
Further, for the edge that lies on the path \(P\) and connects to the leaf, no SI condition is required for \(Y.\) Similarly, for an edge that connects the root with one of its children that, if possible, does not lie on the path \(P\), no SI condition is required for \(X.\)
\item Theorem \ref{thm: supermodular order of tree specifications generalized} extends the comparison results for conditionally independent factor models and for discrete-time Markov processes to Markov tree distributions:
    If \(T\) is a chain, i.e., the edges of the tree are given by \(E=\{(0,1),(1,2),(2,3),...\},\) then condition \eqref{thm11} in Theorem \ref{thm: supermodular order of tree specifications generalized} is \(X_{i+1}\uparrow_{st} X_i\) for \(i\in \{1,2,3,\ldots\}\) and condition \eqref{thm12} simplifies to \(Y_i\uparrow_{st} Y_{i+1}\) for \(i\in \{0,1,2,\ldots\}.\) Hence, Theorem \ref{thm: supermodular order of tree specifications generalized} generalizes Proposition \ref{propMKprocess}.
    In particular, we obtain that condition \eqref{propMKprocess1} in Proposition \ref{propMKprocess} can be skipped for \(i=0,\) when we forgo with PSMD of \(X.\)
    In the case where \(T\) is a star on \(d+1\) nodes, i.e., when \(L=N\setminus \{0\}\) (all nodes except the root are leaves), then the set of edges is given by \(E=\{(0,1),(0,2),\ldots,(0,d)\}.\) In this case, conditions \eqref{thm11} and \eqref{thm12} of Theorem \ref{thm: supermodular order of tree specifications generalized} simplify to \(X_j\uparrow_{st} X_0\) for \(i\in \{1,\ldots,d\}\setminus \{k^*\}\) and \(Y_j\uparrow_{st} Y_0\) for \(j\in \{1,\ldots,d\}\setminus \{\ell\}.\) Hence, Theorem \ref{thm: supermodular order of tree specifications generalized} also generalizes Lemma \ref{lemfacmod} noting that condition \eqref{lemfacmod1} can be omitted for \(i=k^*\), and condition \eqref{lemfacmod2} can be omitted for \(i=\ell.\) In this case, \(X\) and \(Y\) are, in general, no longer PSMD.
    \item Corollary \ref{corsm} provides a simple method to construct and compare PSMD Markov tree distributions: 
   For any set of univariate distribution functions assigned to the nodes and for any set of bivariate SI copulas assigned to the edges of the tree, the implied Markov tree distribution is PSMD.
   Further, Corollary \ref{corsm} yields a flexible method to construct multivariate parametric distributions that are increasing in all of their parameters with respect to the supermodular order. 
   For fixed $F$ and $T$, this construction relies only on families of pointwise increasing \CI copulas assigned to the edges of the tree \(T\). 
   While existing construction methods from the literature rely on the fact that the supermodular order is closed under mixtures or under independent or comonotonic concatenations \cite[Theorem 9.A.3]{Shaked-Shanthikumar-2007}, our method is based on conditional independence along trees.
    \end{enumerate}
\end{remark}

\subsection{Structure of the paper}

The rest of the paper is organized as follows. 
Section \ref{sec: preliminaries} introduces Markov tree distributions and presents the relevant dependence concepts used throughout the paper.
As a consequence of our main result, Section \ref{sec3} provides comparison results when the marginals are in stochastic or convex order.
Section \ref{secSM} discusses, for star-shaped structures, a considerable extension of our supermodular ordering results to non-positive dependence specifications.
Section \ref{secHidMarkov} examines distributional robustness of hidden Markov models with a particular focus on the maximum of a perturbed random walk under model uncertainty.
Various counterexamples that demonstrate the generality of our results are presented in Section \ref{seccex}.
A detailed discussion of the assumptions of Theorem \ref{thm: supermodular order of tree specifications generalized} is given in Appendix \ref{secSI}.
All proofs, including the technical proof of Theorem \ref{thm: supermodular order of tree specifications generalized}, are deferred to Appendix \ref{secproofs}.


\section[Notation and basic concepts]{Notation and basic dependence concepts}\label{sec: preliminaries}

This section provides the notation and dependence concepts used throughout the paper. 
Proposition \nolinebreak \ref{prop: existence of markov tree dirstribution} gives a simple representation of Markov tree distributions in terms of bivariate tree specifications. This representation serves as both a practical tool for constructing Markov tree distributions and a formal framework for various comparison results studied in this paper.
The second part of this section covers the definitions and basic relations of the relevant stochastic orders and positive dependence concepts.

\subsection{Markov tree distributions}

Trees can be used to model simple dependencies between random variables. Each node of a tree represents a random variable, while the edges capture the dependence structure between random variables that are adjacent in the tree \cite{Czado-2019,Lauritzen-1996,Koller-2009}.
As we outline in the sequel, under the Markov property for tree structures, there exists a unique distribution on \(\R^N\)---the Markov tree distribution---fully determined by the univariate distribution functions assigned to the nodes and by the bivariate copulas assigned to the edges of the tree.

\subsubsection{Trees and Markov tree dependence}\label{secdeftree}

We denote by \(N\) an at most countable set of nodes 
which we label with the integers \(N = \{0,1, \dots, d\}\) for some \(d\in \N = \{1,2,\ldots\},\) whenever \(N\) has finitely many elements, and with \(N=\N_0 :=\{0\}\cup\N\)  otherwise. We assume $|N|\geq2$ to avoid cumbersome notation for trivial cases, where \(|N|\) denotes the number of elements of \(N.\)
A graph on \(N\) is a tuple \((N,E),\) where \(E\subset N\times N\) is a set of oriented edges. By abuse of notation we write $\{i,j\}\in E$ if $(i,j)\in E$ or $(j,i)\in E$. 
For \(i,j\in N\) with \(i\ne j,\) a \emph{directed path from $i$ to $j$} is a vector $(i,i_1,\dots,i_m,j)\subseteq N^{m+2}$ of \(m+2\) distinct nodes, \(m\in \N_0,\) such that $(i,i_1),(i_1,i_2),\dots,(i_m,j)\in E$.  Similarly, a (undirected) \emph{path between $i$ and $j$} is a set $\{i_1,\dots,i_m\}\subseteq N$ of \(m\) distinct nodes, \(m\in \N_0,\) such that $\{i,i_1\},\{i_1,i_2\},\dots,\{i_m,j\}\in E$.

In the literature on dependence modeling, trees are sometimes defined as acyclic graphs, where the edges are unordered pairs of nodes \cite{Bedford-Cooke-2002,cooke1997markov,Czado-2019,Joe-2011}.
However, since we generally allow for asymmetric dependence properties, we focus on directed trees (a.k.a.\@ oriented trees or arborescences) with a designated root, which we label, without loss of generality, as \(0\in N.\)
According to the following definition, a tree is a graph in which all nodes can be reached from the root via a unique directed path.
\begin{definition}[Directed tree]\label{def:tree}~\\
	A (directed) \emph{tree} is a graph $T=(N,E)$ 
 such that for all nodes $i\in \bN\backslash\{ 0 \}$ there exists a unique directed path from $0$ to $i$.
\end{definition}
Denote by \(\deg(i) := \#\{j\in N\colon (i,j)\in E \text{ or } (j,i)\in E\}\) the degree of node \(i\in N.\) 
By definition of a tree, we allow a node to have infinite degree, i.e., an infinite number of adjacent nodes. 
Further, any tree has no cycles in the sense that an undirected path between each two nodes $i$ and $j$ is uniquely determined and may also contain the root. We write \(p(i,j)\subseteq N\) (or equivalently $p(j,i)\subseteq N$) for this path, noting that the nodes $i$ and $j$ are not included in $p(i,j)$. 
The \emph{leaves} of a tree are defined as the subset of nodes in $N\backslash\{0\}$ having only one adjacent node, i.e.,
\begin{align}\label{defleaf}
    L := \{ k\in N\setminus \{0\} \mid \deg(k)=1\}\subset N.
\end{align}

The concept of Markov tree dependence uses a tree to model conditional independence between random variables indexed by the nodes of the tree; see \cite{cooke1997markov,Lauritzen-1996,meeuwissen1994tree}. 
Special cases are Markov processes in discrete time and conditionally independent factor models, where the underlying tree is a chain and a star, respectively (see Figure \ref{5dimlabDvine}\,a and b). 
A node $i\in N$ is said to \emph{separate} two disjoint sets $A,B\subset N$ if for every $a\in A$ and $b\in B$ the path between $a$ and $b$ contains $i$.
 
\begin{definition}[Markov tree dependence; {\cite[Definition 5]{meeuwissen1994tree}}]\label{defMTD}~\\
Let \(T=(N,E)\) be a tree.
A distribution \(\mu\) on $\mathbb R^{|N|}$ (resp. $\mathbb R^{\mathbb N_0}$ if $N=\mathbb N_0$) has \emph{Markov tree dependence} (or is a \emph{Markov tree distribution}) with respect to $T$ if there exists a sequence \((X_n)_{n\in N} \sim \mu\) of random variables such that for every two finite disjoint subsets \(A,B\subset N\) and for every \(i\in N\) that separates \(A\) and \(B\), the vectors \(X_A=(X_j)_{j\in A}\) and \(X_B=(X_j)_{j\in B}\) are conditionally independent given \(X_i\).
\end{definition}

Weaker (i.e., non-Markovian) and stronger (i.e., higher-order Markovian) concepts of tree dependence are also discussed in \cite{Bedford-Cooke-2002,meeuwissen1994tree}.

\subsubsection{Bivariate tree specifications and Markov realizations}

For various comparison results,
we use the concept of copulas, a tool that allows us us to study dependence structures between random variables in full generality. More precisely, a \emph{\(d\)-copula} is a \(d\)-variate distribution function \(C\colon [0,1]^d \to [0,1]\) having uniformly on $[0,1]$ distributed univariate marginal distributions. Due to Sklar's theorem, every $d$-variate distribution function $F$ on \(\R^d\) can be decomposed into its univariate marginal distribution functions $F_i$, $1\leq i\leq d$, and a $d$-copula \(C\) such that the joint distribution function can be expressed as the concatenation of these, i.e., 
\begin{align}\label{eqsklar}
    F(x) = C(F_1(x_1),\dots,F_d(x_d))\quad \text{for all } x = (x_1,\ldots,x_d)\in \R^{d}.
\end{align}
In this case, $C$ is called a copula of $F$.
The copula \(C\) is uniquely determined on \(\Ran(F_1)\times \cdots \times \Ran(F_d),\)  where \(\Ran(f)\) denotes the range of a function \(f.\)  Further, for any copula \(C\) and for all univariate distribution functions \(F_1,\ldots,F_d,\) the right-hand side of \eqref{eqsklar} defines a \(d\)-variate distribution function.
If $X=(X_1,\dots,X_d) $ has distribution function \(F,\) we say that $C = C_X$ is a copula of $X$. 
For an overview of the concept of copulas, see e.g. \cite{Durante-2016,Nelsen-2006,Rueschendorf-2013}.

As a consequence of the definition of Markov tree dependence (Definition \ref{defMTD}), for any path \(p(i,j) = \{i_1,\ldots, i_m\}\) from $i$ to $j,$ the conditional distribution \(X_i\mid (X_{i_1},\ldots,X_{i_m},X_j)\) depends only on the random variable \(X_{i_1}\), which is adjacent to \(X_i.\)
This implies that Markov tree distributions are completely specified by the distributions of the bivariate random variables \((X_k,X_\ell),\) \((k,l)\in E,\) that are adjacent in the underlying tree \(T.\) 
Due to Sklar's theorem, each such bivariate distribution function \(F_{k\ell} = F_{X_k,X_\ell}\) can be decomposed into its marginal distribution functions \(F_k\) and \(F_\ell\) and a bivariate copula \(C_{k\ell}\), which describes the dependence structure between \(X_k\) and \(X_\ell\), i.e., \(F_{k\ell}(x,y) = C_{k\ell}(F_k(x),F_\ell(y))\) for all \((x,y)\in \R^2.\)
For a fixed tree, a bivariate tree specification assigns a univariate distribution function to each node and a bivariate copula to each edge of the tree as follows.

\begin{definition}[Bivariate tree specification; {\cite[Definition 4]{meeuwissen1994tree}}]\label{def: Bivariate tree specification}~\\
	A triple \(\cT=(F,T,B)\) is a \emph{bivariate tree specification} if
	\begin{enumerate}[(i)]
		\item $T=(N,E)$ is a tree,
		\item \(F=(F_n)_{n\in N}\) is a family of univariate distribution functions,
		\item \(B=(B_e)_{e\in E}\) is a family of bivariate copulas.
	\end{enumerate}
\end{definition}

For a probability distribution \(\mu\) on \(\R^{|N|},\) if \(d<\infty,\) or on $\mathbb R^{\mathbb N_0}$, if $N=\mathbb N_0$, denote by \(\mu_n\) and \(\mu_{ij},\) \(n,i,j\in N,\) the univariate and bivariate marginal distributions with respect to the components \(n\) and \((i,j),\) respectively. Then \(\mu\) is said to \emph{realize} a bivariate tree specification \(\cT=(F,T,B),\) if for all \(n\in N\) and \(e=(i,j)\in E,\) \(F_n\) is the distribution function of \(\mu_n\) and \(B_e\) is a copula of \(\mu_{ij}.\)

Due to the following proposition, for every bivariate tree specification there exists a unique realizing Markov tree distribution; see \cite{Bedford-Cooke-2002} for the case when Lebesgue densities exist.

\begin{proposition}[Markov realization of a bivariate tree specification]\label{prop: existence of markov tree dirstribution}~\\
	For every bivariate tree specification $\cT=(F,T,B)$ there is a unique distribution $\mu$ that realizes the bivariate tree specification $\cT$ such that $\mu$ has Markov tree dependence with respect to $T$.
\end{proposition}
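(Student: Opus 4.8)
The plan is to construct the finite-dimensional distributions explicitly from conditional kernels built out of the edge copulas, to verify directly that these realize the specification and satisfy the Markov property, and then to pass to the countable case by Kolmogorov's extension theorem; uniqueness will follow because the Markov tree property pins down every conditional law in terms of the prescribed bivariate marginals.

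First I would turn each edge copula into a transition kernel. For an edge $e=(i,j)\in E$, the function $F_{ij}(x,y):=B_{ij}(F_i(x),F_j(y))$ is, by Sklar's theorem \eqref{eqsklar}, a genuine bivariate distribution function on $\R^2$ with marginals $F_i,F_j$ and copula $B_{ij}$. Since $\R$ is Polish, I disintegrate $F_{ij}$ with respect to its first marginal to obtain a regular conditional distribution, i.e. a Markov kernel $K_{ij}(x,\cdot)$ satisfying $\int K_{ij}(x,\cdot)\,\mathrm{d}F_i(x)=F_j$ and such that $F_i\otimes K_{ij}$ has distribution function $F_{ij}$. (Equivalently one may write $K_{ij}$ through the partial derivative $\partial_u B_{ij}$, which exists $F_i$-a.e.; either route handles atoms in $F_i$, where the copula is not unique.)

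Next, for a finite tree I enumerate the nodes in a topological order so that every non-root node $j$ follows its unique parent $\mathrm{pa}(j)$, and I define $\mu$ as the law obtained by drawing $X_0\sim F_0$ and then, along the order, drawing each $X_j$ from $K_{\mathrm{pa}(j),j}(X_{\mathrm{pa}(j)},\cdot)$. An induction along this order shows every univariate marginal equals $F_n$, since each kernel integrates the parent's marginal back to the child's; and for each edge the pair $(X_i,X_j)$ has law $F_{ij}$, hence copula $B_{ij}$, so $\mu$ realizes $\cT$. The Markov tree dependence is built into the product of edge kernels: if a node $i$ separates disjoint $A$ and $B$, then deleting $i$ disconnects the tree, so the kernel product factorizes into an $A$-part and a $B$-part each depending on the shared value $X_i$ only, which is exactly conditional independence of $X_A$ and $X_B$ given $X_i$. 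For uniqueness in the finite case, any realizing Markov tree distribution has the same univariate and edge-bivariate marginals, hence the same kernels $K_{ij}$ ($F_i$-a.e.); the separation property then forces the conditional law of each $X_j$ given all its non-descendants to equal $K_{\mathrm{pa}(j),j}(X_{\mathrm{pa}(j)},\cdot)$, and chaining these conditionals reconstructs precisely $\mu$. For $N=\N_0$ I apply the finite construction to the minimal subtree spanned by $S\cup\{0\}$ for each finite $S\subset N$ and marginalize to $S$; consistency holds because marginalizing out a leaf of a subtree merely integrates out one kernel and returns the construction on the smaller subtree, so Kolmogorov's extension theorem yields a unique $\mu$ on $\R^{\N_0}$, and the Markov property, being a statement about finite subsets, transfers from the finite constructions.

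The hard part will be the disintegration step when the marginals $F_i$ have atoms: the copula is then no longer unique, and the conditional distributions must be handled via regular conditional probabilities (or $F_i$-a.e. partial derivatives) rather than densities, so some care is needed to ensure $K_{ij}$ is well defined and that $F_i\otimes K_{ij}$ indeed recovers the copula $B_{ij}$ on $\Ran(F_i)\times\Ran(F_j)$. Once the kernels are in hand, verifying that the Markov factorization survives the kernel product is the conceptual core, while the Kolmogorov consistency check and the uniqueness argument are essentially bookkeeping.
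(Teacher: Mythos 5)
Your proof is correct and takes essentially the same route as the paper: both disintegrate the Sklar bivariate laws $B_e(F_i,F_j)$ into conditional kernels, attach these kernels sequentially along the tree (your topological-order construction is exactly the paper's leaf-attachment induction), and pass to $N=\N_0$ via Kolmogorov's extension theorem with projectivity supplied by the disintegration. The only difference is presentational: you spell out the Markov-factorization and uniqueness-by-chaining-conditionals arguments that the paper delegates to \cite[Theorem 2]{meeuwissen1994tree}.
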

Recall that, for a bivariate tree specification \(\cT=(F,T,B),\) we denote its uniquely determined Markov realization by
\(\cM(F,T,B)\), and we write \(X\sim \cM(F,T,B)\) for a sequence \(X=(X_n)_{n\in N}\) of random variables with this specification.

\subsection[Dependence orderings/concepts]{Stochastic orderings and positive dependence concepts}
Our comparison results are formulated in terms of integral stochastic orderings, which compare expectations of functions of two random vectors. To this end, consider for a measurable \emph{weight} function \(b\colon\R^d \to [1,\infty)\) (e.g., \(b(x) = \max\{1,\lVert x \rVert^p\}\) for some \(p\in \N_0\)) the class \(\cR_b := \{V = (V_1,\ldots,V_d) \mid \E b(V) < \infty\}\) of \(b\)-integrable random vectors on a probability space \((\Omega,\cA,P).\) Then, for some class  \(\cF\subseteq \{f \colon \R^d\to \R\mid f \text{ measurable}, \sup_{x\in \R^d} |f(x)|/b(x) < \infty\}\) of \(b\)-bounded functions and for \(V,W\in \cR_b,\) define the integral stochastic order
\begin{align*}
	V\prec_{\cF} W \quad\text{if} \quad\E f(V)\leq \E f(W) \quad\text{for all } f\in \cF;
\end{align*}
see \cite{Mueller-1997}.
Recall that a function \(f\colon \R^d \to \R\) is supermodular if \(f(x)+f(y) \leq f(x\wedge y) + f(x\vee y)\) for all \(x,y\in \R^n.\)
The function \(f\) is said to be \emph{increasing/decreasing supermodular} if it is supermodular and componentwise increasing/decreasing. If \(f\) is supermodular and componentwise convex, it is \emph{directionally convex}.
For several examples of such functions, see Table \ref{table_functions}.
If $f$ is sufficiently smooth, then $f$ is supermodular if and only if $\frac{\partial^2}{\partial x_i\partial x_j}f(x)\geq 0$ for all $1\leq i<j\leq d$ and for all \(x.\) Similar properties hold true for sufficiently smooth increasing/decreasing supermodular or directionally convex functions. 

Denote by \(\cF_\uparrow,\) \(\cF_{sm},\) \(\cF_{ism},\) \(\cF_{dsm},\) and \(\cF_{dcx},\) the class of componentwise increasing, supermodular, increasing supermodular, decreasing supermodular, and directionally convex \(b\)-bounded functions,
respectively. 
We will use the following integral stochastic orderings.

\begin{definition}[Stochastic orderings]\label{defdepord}~ 
 \begin{enumerate}[(a)]
     \item \label{defdeporda}  Let \(V = (V_1,\ldots,V_d), W=(W_1,\ldots,W_d)\in \cR_b\).  Then \(V\) is said to be smaller than \(W\) with respect to 
	\begin{enumerate}[(i)]
        \item the \emph{stochastic order}, written \(V\leq_{st} W\),  if $V\prec_{\cF_{\uparrow}} W$,
		\item the \emph{supermodular order}, written \(V\leq_{sm} W\),  if $V\prec_{\cF_{sm}} W$,
        \item the \emph{increasing (decreasing) supermodular order}, written \(V\leq_{ism} W\) (\(V\leq_{dsm} W\)),  if $V\prec_{\cF_{ism}} W$ ($V\prec_{\cF_{dsm}} W$),
		\item the \emph{directionally convex order}, written \(V\leq_{dcx} W\), if $V\prec_{\cF_{dcx}} W$.
    \end{enumerate}
    \item \label{defdepordb} Let \(V=(V_n)_{n\in \N}\) and \(W=(W_n)_{n\in \N}\) be stochastic processes. Let \(\prec\) be one of the orderings in \eqref{defdeporda}. Then \(V\) is said to be smaller than \(W\) with respect to \(\prec\) if for all \(m\in \N\) and all \((n_1,\ldots,n_m)\in \N^m,\) one has \((V_{n_1},\ldots,V_{n_m}) \prec (W_{n_1},\ldots,W_{n_m}).\)
  \end{enumerate}
\end{definition}
For \(d=1,\) the directionally convex order reduces to the well known convex order \(\leq_{cx},\) and the increasing convex order \(\leq_{icx}\) is defined for the class of functions that are increasing and convex. 
Note that the comparison of stochastic processes in part \eqref{defdepordb} is defined through the comparison of their finite-dimensional marginal distributions. This corresponds to the concept of strong comparison of stochastic processes in \cite[Definition 5.1.2]{Mueller-Stoyan-2002}.
We denote by \(V \leq_{lo} W\) and \(V \leq_{uo} W\) the lower and upper orthant order, defined by \(F_V(x)\leq F_W(x)\) and \(\overline{F}_V(x)\leq \overline{F}_W(x)\) for all \(x\in \R^d,\) respectively. 
Some basic relations between the above considered orderings are
\begin{align}\label{smpdo}
	&V\leq_{dcx} W ~ \text{and} ~ V\leq_{ism} W \quad \Longleftarrow \quad V\leq_{sm} W ~~ \Longrightarrow \quad V\leq_{lo} W ~\text{and} ~ V\leq_{uo} W,\\
    \label{eqminmaxsum}
    &V\leq_{ism} W \quad \Longrightarrow \quad \max_{ n} \{V_n\} \geq_{st} \max_{n} \{W_n\} ~~\text{and} 
     ~~ \sum_{n} V_n \leq_{icx} \sum_{n} W_n.
\end{align} 
Further, \(V\leq_{dcx} W\) implies \(\sum_n V_n\leq_{cx} \sum_n W_n\).
If \(\E[V_n] = \E[W_n]\) for all \(n,\) the increasing supermodular order reduces to the supermodular order and the increasing convex order in \eqref{eqminmaxsum} reduces to the convex order.
As a direct consequence of \eqref{smpdo}, the supermodular order is a pure dependence order, i.e., \(V\leq_{sm} W\) implies \(V_i\eqd W_i\) for all \(i\in \{1,\ldots,d\},\) where \(\eqd\) denotes equality in distribution. 
For bivariate random vectors with identical marginal distributions, the orderings in \eqref{smpdo} are equivalent, i.e., if \(d=2,\) then
\begin{align}\label{eqbivord}
 &V\leq_{lo} W &&\Longleftrightarrow &&V\leq_{uo} W &&\Longleftrightarrow && V\leq_{sm} W &&\Longleftrightarrow && V\leq_{dcx} W
\end{align}
whenever \(V_1\eqd W_1\) and \(V_2 \eqd W_2;\) see \cite[Theorem 2.5]{Mueller-2000}. 
Hence, for identical marginals, a verification of the ordering condition \eqref{thm13} in Theorem \ref{thm: supermodular order of tree specifications generalized} reduces to a pointwise comparison of the associated bivariate distribution functions or copulas. \\

For modeling positive dependencies, we make use of several positive dependence concepts. 
To this end, we denote by \(V^\perp:=(V_1^\perp,\ldots,V_d^\perp)\) an independent random vector with the same marginal distributions as $V=(V_1,\dots,V_d)$, i.e., \(V_1^\perp,\ldots,V_d^\perp\) are independent and \(V_i^\perp\eqd V_i\) for all $i\in\{1,\dots,d\}$. 


\begin{definition}[Concepts of positive dependence]\label{defdepnotcop}~\\
	A random vector $V=(V_1,\dots,V_d)$ is said to be
	\begin{enumerate}[(i)]
		\item \emph{positive supermodular dependent} (PSMD) if \(V^{\perp} \leq_{sm} V,\)
		\item  \emph{conditionally increasing} (CI) if 
		\begin{align}\label{defconin}
			V_i\uparrow_{st} (V_j, ~j\in J)
		\end{align}
		for all \(i\in \{1,\ldots,d\}\) and \(J\subseteq \{1,\ldots,d\}\setminus \{i\},\) where \eqref{defconin} means that the conditional distribution \(V_i \mid (V_j=x_j, j\in J)\) is $\leq_{st}$-increasing in \(x_j\) for all  \(j\in J\), $P^{V_J}$-a.s.,  i.e., \(\E[f(V_i)\mid V_j=x_j, j\in J]\) is increasing in \(x_j\) for all \(j\in J\) outside a $V_J$-null set and for all increasing functions \(f\colon \R\to \R\) such that the expectations exist,
        \item \label{defdepnotcop5}  \emph{conditionally increasing in sequence} (\textnormal{CIS}) if  \eqref{defconin} holds for all \(i\in \{2,\ldots,d\}\) and  \(J\subseteq\{1,\ldots,i-1\}\), 
		\item \emph{multivariate totally positive of order \(2\)} (\(\MTP\)) if \(V\) is absolutely continuous with Lebesgue-density $f$ such that $\log(f)$ is supermodular. 
	\end{enumerate}
\end{definition}
The above concepts of positive dependence are defined similarly for probability distributions and distribution functions, and remain invariant under (strictly) increasing transformations; see \cite[Theorem 3.10.19]{Mueller-Stoyan-2002} and \cite{Cai-2012}. 
The concepts are related by 
\begin{align}\label{implposdepcon}
	\MTP ~~~\Longrightarrow ~~~ \text{CI} ~~~\Longrightarrow ~~~ \text{CIS} ~~~\Longrightarrow ~~~ \text{PSMD} , 
\end{align}
where the implications are strict for dimension \(\geq 2.\)

\section[Flexibility of marginals]{Flexibility in the marginal distributions}\label{sec3}

In Theorem \ref{thm: supermodular order of tree specifications generalized} and Corollary \ref{corsm}, we have compared Markov tree distributions
with respect to the supermodular order,
which requires identical marginal distributions.
However, if the marginals of \(X\) and \(Y\) differ or are only partially known, some flexibility of our results with respect to the marginals is desirable.
First, we study flexibility of the marginal distributions in stochastic order, which yields a comparison result with respect to the increasing/decreasing supermodular order. Then we allow flexibility of the marginal distributions in convex order and obtain a comparison result with respect to the directionally convex order. 
Since conditional independence is not solely a copula-based property,
caution is required when comparing Markov tree distributions with different and discontinuous marginal distribution functions; see Example \ref{exp: marginal distribution determines copula}.

\subsection[Supermodular functions]{Inequalities for increasing/decreasing supermodular functions}

In the following result, we additionally compare the marginal distributions in stochastic order. We assume that the ranges of the marginal distribution functions are identical except on Lebesgue null sets. The closure of a set \(A\subseteq [0,1]\) is denoted by \(\overline{A}.\) 
 

\begin{theorem}[Comparison of marginals in stochastic order]\label{thm: maintwo}~\\
    Assume that $X = (X_n)_{n\in N}\sim\cM(F,T,B) $ and $Y = (Y_n)_{n\in N} \sim\cM(G,T,C)$ satisfy the positive dependence conditions \eqref{thm11} and \eqref{thm12} of Theorem \nolinebreak\ref{thm: supermodular order of tree specifications generalized} and assume that \(\overline{\Ran(F_n)} = \overline{\Ran(G_n)}\) for all \(n\in N.\) Then the following statements hold true:
    \begin{enumerate}[(i)]
        \item \label{thm: maintwo1}\(B_e\leq_{lo} C_e\)  for all \(e\in E\) and \(F_n\leq_{st} G_n\) for all \(n\in N\) implies \(X\leq_{ism} Y,\)
        \item \label{thm: maintwo2} \(B_e\leq_{lo} C_e\)  for all \(e\in E\) and \(F_n\geq_{st} G_n\) for all \(n\in N\) implies \(X\leq_{dsm} Y.\)
    \end{enumerate}
\end{theorem}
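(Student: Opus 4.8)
The plan is to factor the asserted comparison into a pure \emph{dependence} step and a pure \emph{marginal} step by inserting the intermediate Markov tree distribution $Z\sim\cM(F,T,C)$, which carries the marginals of $X$ together with the edge copulas of $Y$; this realization exists and is unique by Proposition \ref{prop: existence of markov tree dirstribution}. For part \eqref{thm: maintwo1} I would establish the two partial comparisons $X\leq_{sm}Z$ and $Z\leq_{st}Y$ separately and then glue them: since $\cF_{ism}\subseteq\cF_{sm}$ and $\cF_{ism}\subseteq\cF_\uparrow$, both $\leq_{sm}$ and $\leq_{st}$ imply $\leq_{ism}$, so transitivity of the integral order $\prec_{\cF_{ism}}$ yields $X\leq_{ism}Y$. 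For infinite $N$ everything is read off finite-dimensional marginals in the sense of Definition \ref{defdepord}\eqref{defdepordb}, so I may reduce to finite subtrees throughout.

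The dependence step $X\leq_{sm}Z$ is a direct application of Theorem \ref{thm: supermodular order of tree specifications generalized}. The smaller vector $X=\cM(F,T,B)$ satisfies condition \eqref{thm11} by hypothesis, and the larger vector $Z=\cM(F,T,C)$ satisfies condition \eqref{thm12} because it shares its edge copulas with $Y$ and the stochastic-monotonicity conditions are copula based (Remark \ref{remdisasu}\,\eqref{remdisasu1}). The bivariate ordering condition \eqref{thm13} holds because $X$ and $Z$ have identical univariate marginals $F_n$ and edge copulas satisfying $B_e\leq_{lo}C_e$; by the equivalence \eqref{eqbivord} for bivariate vectors with equal marginals, $B_e\leq_{lo}C_e$ upgrades to $(X_i,X_j)\leq_{sm}(Z_i,Z_j)$ for every $e=(i,j)\in E$. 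Hence $X\leq_{sm}Z$.

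The marginal step $Z\leq_{st}Y$, i.e.\ comparing the two Markov tree distributions $\cM(F,T,C)$ and $\cM(G,T,C)$ that share all edge copulas but satisfy $F_n\leq_{st}G_n$, is the heart of the argument and where the range assumption enters. For continuous marginals the proof is immediate: one samples a single uniform Markov tree $U\sim\cM(\mathrm{unif},T,C)$ and sets $Z_n:=F_n^{-1}(U_n)$, $Y_n:=G_n^{-1}(U_n)$; these are componentwise increasing transforms of $U$, hence the correct Markov realizations, while $F_n^{-1}\leq G_n^{-1}$ forces $Z_n\leq Y_n$ almost surely and thus $Z\leq_{st}Y$ by the coupling characterization of $\leq_{st}$. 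The obstacle is that for discontinuous marginals the quantile maps $F_n^{-1}$ are not injective, and collapsing the atom intervals destroys conditional independence, so the naive pushforward need not realize $\cM(F,T,C)$ at all --- exactly the phenomenon warned about before Example \ref{exp: marginal distribution determines copula}. The assumption $\overline{\Ran(F_n)}=\overline{\Ran(G_n)}$ is what repairs this: the gaps of $\Ran(F_n)$ are precisely the jumps (atoms) of $F_n$, so equality of the closures forces $F_n$ and $G_n$ to have their jumps at the \emph{same} quantile levels, whence $F_n^{-1}$ and $G_n^{-1}$ are flat on exactly the same subintervals of $(0,1)$. This alignment lets me build a common representation: I would construct a single family of uniforms $(U_n)$ with edge copulas $C$ such that $F_n^{-1}(U_n)\sim\cM(F,T,C)$ and $G_n^{-1}(U_n)\sim\cM(G,T,C)$ simultaneously, where the additional randomization needed to restore conditional independence inside the aligned atom intervals is chosen identically for both models, and verify these identifications by checking that they realize the respective bivariate tree specifications and are Markov, invoking uniqueness in Proposition \ref{prop: existence of markov tree dirstribution}. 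Then $Z_n=F_n^{-1}(U_n)\leq G_n^{-1}(U_n)=Y_n$ almost surely gives $Z\leq_{st}Y$.

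Finally I assemble the pieces. Combining $X\leq_{sm}Z$ and $Z\leq_{st}Y$ through the two inclusions above proves part \eqref{thm: maintwo1}. Part \eqref{thm: maintwo2} follows by the symmetric splitting: the dependence step $X\leq_{sm}Z$ is unchanged (it uses only $B_e\leq_{lo}C_e$ and the marginals $F$), while the marginal step now reads $Y\leq_{st}Z$ because $G_n\leq_{st}F_n$. Since $\cF_{dsm}\subseteq\cF_{sm}$ we get $X\leq_{dsm}Z$ from $X\leq_{sm}Z$, and since every $f\in\cF_{dsm}$ is componentwise decreasing we have $-f\in\cF_\uparrow$, so $Y\leq_{st}Z$ yields $\E f(Z)\leq\E f(Y)$, i.e.\ $Z\leq_{dsm}Y$; transitivity then gives $X\leq_{dsm}Y$. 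I expect the only genuinely delicate point to be the construction and Markov verification of the shared uniform representation in the atomic case; all remaining steps are bookkeeping with the order relations recorded in \eqref{smpdo} and \eqref{eqbivord}.
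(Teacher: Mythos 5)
Your decomposition is exactly the paper's: the same intermediate vector $Z\sim\cM(F,T,C)$ carrying the marginals of $X$ and the edge copulas of $Y$, the same dependence step $X\leq_{sm}Z$ via Theorem \ref{thm: supermodular order of tree specifications generalized} upgraded through \eqref{eqbivord}, the same reduction to finite $N$, and the same assembly of the two partial comparisons into $\leq_{ism}$ resp.\ $\leq_{dsm}$. The divergence is entirely in the marginal step $Z\leq_{st}Y$. The paper does not build a coupling by hand: it first proves (Lemma \ref{prop: Copula of bivariate tree specifications}, an induction over the tree using generalized partial derivative operators $\partial_1^{H}$ that depend on a marginal $H$ only through $\overline{\Ran(H)}$) that the copula of a Markov tree distribution is itself realized by the Markov tree distribution of the rank-transformed variables; since $\overline{\Ran(F_n)}=\overline{\Ran(G_n)}$ forces $F_n(Z_n)\eqd G_n(Y_n)$, the vectors $Z$ and $Y$ share a common copula, and then $Z\leq_{st}Y$ follows at once from \cite[Theorem 4.1]{Mueller-Scarsini-2001} --- equivalently, once a common copula is known, the monotone coupling you are after comes for free from Sklar's theorem, with no need to restore conditional independence by auxiliary randomization.

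Your alternative is to construct the shared uniform representation directly, and that is where the genuine gap sits: you assert the existence of uniforms $(U_n)$ with $F_n^{-1}(U_n)\sim\cM(F,T,C)$ and $G_n^{-1}(U_n)\sim\cM(G,T,C)$ simultaneously, ``with the randomization chosen identically,'' but none of the three things that make this true is verified. Concretely one must check (a) that the randomized vector $(U_n)$ (e.g.\ the distributional transform of $Z_n$ with independent auxiliary uniforms) is itself Markov with respect to $T$; (b) that applying the non-injective maps $G_n^{-1}$ does not destroy conditional independence --- this requires showing that the conditional law of the neighbouring coordinates given $U_i$ is constant across each aligned gap interval of $\overline{\Ran(F_i)}=\overline{\Ran(G_i)}$, which is precisely the failure mode exhibited in Example \ref{exp: marginal distribution determines copula}; and (c) that the resulting bivariate margins are $C_e(G_i(\cdot),G_j(\cdot))$, which needs the realized copula of $(U_i,U_j)$ to agree with $C_e$ on $\Ran(G_i)\times\Ran(G_j)$, obtainable from agreement on $\Ran(F_i)\times\Ran(F_j)$ only via the Lipschitz continuity of copulas together with the equality of closed ranges, before invoking uniqueness in Proposition \ref{prop: existence of markov tree dirstribution}. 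All three verifications can be carried out, so your route is viable and would give a probabilistic-coupling counterpart to the paper's analytic lemma; but as written, the one step that carries the entire content of the theorem (and of the range hypothesis) is deferred rather than proven, and Lemma \ref{prop: Copula of bivariate tree specifications} is exactly the packaged form of that missing work.
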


\begin{remark}\label{rem38}
    \begin{enumerate}[(a)]
    \item 
    As a consequence of Theorem \ref{thm: maintwo}, we obtain distributional robustness for various functionals that are consistent with the increasing resp. decreasing supermodular order. In particular, we obtain distributional robustness of extreme order statistics and sums of random variables with respect to the stochastic order and the increasing convex order as in \eqref{eqminmaxsum}. Note that for bivariate copulas, \(B_e \leq_{lo} C_e\), \(B_e \leq_{uo} C_e\), and \(B_e\leq_{sm} C_e\) are equivalent due to \eqref{eqbivord}.
    \item Due to Example \ref{exp: marginal distribution determines copula}, the assumption \(\overline{\Ran(F_n)} = \overline{\Ran(G_n)}\) cannot be omitted because the univariate marginal distributions generally affect the dependence structure under the Markov property. For the proof of Theorem \ref{thm: maintwo}, we use that the copula of a Markov tree distribution depends only on the ranges of the marginal distribution functions.
    \item \label{rem38b} The assumption \(\overline{\Ran(F_n)} = \overline{\Ran(G_n)}\) is, in particular, satisfied when \(F_n\) and \(G_n\) are continuous. In Example \ref{exGaussobs}, we use the fact that the random variables \(\xi_i^+ := \max\{\xi_i,0\},\) for \(\xi_i\sim N(0,\sigma_i),\) \(i=1,2,\) are comparable in stochastic order and satisfy \(\Ran(F_{\xi_1^+}) = \Ran(F_{\xi_2^+}) = \{0\}\cup [1/2,1).\) More precisely, if \(\sigma_1 \leq \sigma_2,\) then \(\xi_1^+ \leq_{st} \xi_2^+.\)
    \end{enumerate}
\end{remark}

\begin{center}
\begin{table}[t]
\begin{tabular}{l|l} \toprule
 function \(f(x_1,\ldots,x_d)\) & Properties \\ \midrule
 \(\1_{\{x\geq t\}}\) & increasing supermodular  \\
 \(\1_{\{x\leq t\}}\) & decreasing supermodular \\
\midrule
\(P(\xi_1\leq x_1,\ldots,\xi_d\leq x_d)\) & increasing supermodular  \\
 \(P(\xi_1\geq x_1,\ldots,\xi_d\geq x_d)\) &  decreasing supermodular \\ \midrule
 \(\min\{x_1,\ldots,x_d\}\) & increasing supermodular\\
 \(\max\{x_1,\ldots,x_d\}\) &  increasing supermodular, directionally convex \\ \midrule
 \(\1_{\{\min\{x_1,\ldots,x_d\}\geq K\}}\) & increasing supermodular  \\
 \(\1_{\{\max\{x_1,\ldots,x_d\}\leq K\}}\) &  decreasing supermodular \\ \midrule
 \((K-\max\{x_1,\ldots,x_d\})_+\) &  decreasing supermodular  \\
 \((\max\{x_1,\ldots,x_d\}-K)_+\) & increasing supermodular, directionally convex  \\ 
 \((K-\min\{x_1,\ldots,x_d\})_+\) &   decreasing supermodular, directionally convex \\
\((\min\{x_1,\ldots,x_d\}-K)_+\) &  increasing supermodular \\ \midrule
 \(\varphi(\sum_{n=1}^d \alpha_n x_n)\), \(\varphi\) convex &  supermodular, directionally convex \\
 \(\varphi(\sum_{n=1}^d \alpha_n x_n)\), \(\varphi\) increasing convex &  increasing supermodular, directionally convex \\
 \(\varphi(\sum_{n=1}^d  g_n(x_n))\), \(\varphi\) convex &  supermodular\\
 \(\varphi(\sum_{n=1}^d g_n(x_n))\), \(\varphi\) increasing convex &  increasing supermodular
\\ \midrule
\end{tabular}
\caption{
Important examples and classifications of functions implying comparison results for \(\E f(X)\) with respect to the integral stochastic orders considered in this paper, where \((\xi_1,\ldots,\xi_d)\) is a random vector (independent of \(X\)) on a probability space \((\Omega,\cA,P)\) and where \(t\in \R^d,\) \(K\in \R,\) \((y)_+:=\max\{y,0\},\) \(\alpha_n\geq 0,\) and \(g_n\colon \R \to \R\) increasing. Since the integral stochastic orders satisfy various invariance properties, the above examples also apply to the respective transformations of the functions, for example to componentwise increasing/decreasing transformations in the case of the supermodular order; see \cite{Bauerle-1997}. 
}\label{table_functions}
\end{table}
\end{center}

\subsection[Directionally convex functions]{Inequalities for directionally convex functions}

When the marginals are in convex order, a comparison with respect to the directionally convex order can be achieved if the underlying multivariate copula is CI. This is the content of the following well known result, which is not restricted to Markov tree distributions.

\begin{lemma}[Common \CI copula; {\cite[Theorem 4.5]{Mueller-Scarsini-2001}}]\label{lemcCI}~\\
    Let \(U=(U_0,\ldots,U_d)\) and \(V=(V_0,\ldots,V_d)\)  be random vectors having the same $(d+1)$-dimensional copula \(C=C_U=C_V.\)  If \(C\) is CI, then \(U_i\leq_{cx} V_i\) for all \(i\in \{0,\ldots,d\}\) implies \(U\leq_{dcx} V.\)
\end{lemma}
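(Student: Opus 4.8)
The plan is to unravel the directionally convex order through its defining test functions and to change the marginals one coordinate at a time, exploiting the common copula to couple $U$ and $V$ on a single probability space. First I would note that it suffices to verify $\E f(U)\le \E f(V)$ for $f\in\cF_{dcx}$, and by a standard mollification (which preserves supermodularity and componentwise convexity) I may assume $f$ is smooth. Writing $a_i:=F_{U_i}^{-1}$ and $b_i:=F_{V_i}^{-1}$ for the increasing quantile functions and letting $W=(W_0,\dots,W_d)\sim C$, the common-copula hypothesis together with \eqref{eqsklar} lets me realize $U\eqd(a_i(W_i))_{i}$ and $V\eqd(b_i(W_i))_{i}$ on the same space, with $a_i(W_i)\le_{cx} b_i(W_i)$ for each $i$. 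I then interpolate via hybrid vectors $Z^{(k)}$ whose first $k$ coordinates use $b$ and whose remaining coordinates use $a$, so that $Z^{(0)}\eqd U$ and $Z^{(d+1)}\eqd V$; the claim reduces to $\E f(Z^{(k)})\le \E f(Z^{(k+1)})$, a single-coordinate swap in position $k$.

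For the single-coordinate step I would fix the homotopy $c_t:=(1-t)a_k+t\,b_k$ and write, with $\phi(x,w_{-k})$ denoting $f$ evaluated at the fixed hybrid coordinates and its $k$-th slot equal to $x$,
\[
\E f(Z^{(k+1)})-\E f(Z^{(k)})=\int_0^1 \E\big[\partial_x\phi\big(c_t(W_k),W_{-k}\big)\,\big(b_k(W_k)-a_k(W_k)\big)\big]\,dt .
\]
Componentwise convexity of $f$ makes $\partial_x\phi$ increasing in its first slot, hence increasing in $w_k$ along $c_t$, while supermodularity of $f$ together with the monotonicity of every $a_j,b_j$ makes $\partial_x\phi$ increasing in each coordinate of $w_{-k}$. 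Thus the integrand is a covariance-type pairing of a function increasing in $w_k$ with the mean-zero weight $m(w):=b_k(w)-a_k(w)$.

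The crux is to convert the purely marginal relation $a_k(W_k)\le_{cx} b_k(W_k)$ into nonnegativity of this pairing. Conditioning on $W_k$, set $\lambda(w):=\E[\partial_x\phi(c_t(w),W_{-k})\mid W_k=w]$. I would show $\lambda$ is increasing: directly in the first argument, and through the conditioning because the \CI property forces the conditional law of $W_{-k}$ given $W_k=w$ to be stochastically increasing in $w$ in the multivariate sense -- a fact I would establish by peeling off the coordinates of $W_{-k}$ one at a time and applying \eqref{defconin} at each step to an increasing test function. Since $W_k$ is uniform, the pairing equals $\int_0^1 \lambda\,dM$ with $M(p)=\int_0^p m$, and the convex-order hypothesis gives $M\le 0$ with $M(0)=M(1)=0$; integration by parts then yields $\int_0^1\lambda\,dM=-\int_0^1 M\,d\lambda\ge 0$ since $d\lambda\ge 0$. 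Integrating over $t$ closes the single-coordinate step, and telescoping over $k$ proves the theorem. I expect the main obstacle to be precisely this last transfer: the convex order controls only the marginal law of the swapped coordinate, so everything hinges on extracting the correct monotonicity of $\lambda$ from \CIns, on the integration-by-parts sign condition, and on the measure-theoretic care needed when the marginals are discontinuous so that the common-copula representation remains valid.
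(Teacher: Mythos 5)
The paper does not prove this lemma at all: it is imported verbatim as Theorem 4.5 of M\"uller and Scarsini (2001), so there is no in-paper argument to compare against. Your blind proof is, as far as I can check, correct, and it essentially reconstructs the strategy behind the cited result: couple $U$ and $V$ through a single $W\sim C$ via quantile transforms (which is valid for arbitrary, possibly discontinuous, marginals, since $\bigl(F_{U_i}^{-1}(W_i)\bigr)_i\eqd U$ whenever $C$ is a copula of $U$), telescope over coordinates, and reduce each single-coordinate swap to a univariate convex-order statement weighted by an increasing function. The two ingredients you isolate are exactly the right ones and both hold: (i) CI of $C$ does imply that the conditional law of $W_{-k}$ given $W_k=w$ is increasing in $w$ in the multivariate stochastic order --- your peeling induction works because CI is closed under marginalization, so the inner conditioning step uses \eqref{defconin} with $J=\{k,j_1,\dots,j_{d-1}\}$ and the outer step applies the induction hypothesis to the CI subvector; (ii) the Hardy--Littlewood/Lorenz characterization of $\leq_{cx}$ in terms of quantile functions gives $M\leq 0$, $M(0)=M(1)=0$, and the integration by parts against the nonnegative measure $d\lambda$ closes the argument. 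The remaining gaps you flag are genuinely only routine: the reduction to smooth directionally convex $f$ (mollification preserves supermodularity and componentwise convexity), integrability of $m=b_k-a_k$ (guaranteed since convex order presupposes finite means), and the vanishing of the boundary terms in the Stieltjes integration by parts ($M$ is continuous with $M(0)=M(1)=0$, so no common atoms arise). One presentational suggestion: your step (ii) can be quoted as the known fact that $X\leq_{cx}Y$ implies $\int_0^1\lambda\, q_X\,d\lambda_{\mathrm{Leb}}\leq\int_0^1\lambda\, q_Y\,d\lambda_{\mathrm{Leb}}$ for every bounded increasing $\lambda$ on $(0,1)$, which would shorten the write-up without changing its substance.
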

As we demonstrate in Example \ref{exp:counter for ci}, bivariate CI specifications generally do not result in a Markov tree distribution with a CI copula. 
A sufficient condition for the underlying copula to be \textnormal{CI} is provided by the following special case of \cite[Proposition 7.1]{Fallat-2017}.

\begin{lemma}[\(\MTP\) specifications]\label{lemMTP2}~\\
    Let \(V=(V_0,\ldots,V_d) \sim \cM(G,T,C)\) be a Markov tree distributed random vector such that \((V_i,V_j)\) is \(\MTP\) for all \(e=(i,j)\in E.\) Then \(V\) is \(\MTP.\)
\end{lemma}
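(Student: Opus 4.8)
The plan is to exploit the density factorization that the Markov tree property induces, and then reduce log-supermodularity of the joint density to log-supermodularity of the bivariate edge densities, which is exactly the \(\MTP\) assumption.

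First I would record the density factorization. Since a tree is a decomposable (chordal) graph whose maximal cliques are the edges \(e=(i,j)\in E\) and whose minimal separators are the single nodes \(n\in N\), the Markov tree property (Definition \ref{defMTD}) yields the clique--separator factorization
\begin{align*}
 f(x)=\frac{\prod_{(i,j)\in E} f_{ij}(x_i,x_j)}{\prod_{n\in N} f_n(x_n)^{\deg(n)-1}},
\end{align*}
where \(f_{ij}\) is the Lebesgue density of \((V_i,V_j)\) and \(f_n\) that of \(V_n\); see e.g. \cite{Lauritzen-1996}. Each \((V_i,V_j)\) is absolutely continuous by the \(\MTP\) assumption, so all \(f_{ij}\) and their marginals \(f_n\) exist, and hence so does the joint density \(f\). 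Equivalently, rooting \(T\) at \(0\) and writing each conditional density as \(f_{ij}(x_i,x_j)/f_i(x_i)\), one obtains \(f(x)=f_0(x_0)\prod_{(i,j)\in E} f_{ij}(x_i,x_j)/f_i(x_i)\), which telescopes to the same expression; this matches the construction in Proposition \ref{prop: existence of markov tree dirstribution}.

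Then I would verify the defining lattice inequality \(f(x)f(y)\le f(x\wedge y)f(x\vee y)\) directly, which avoids taking logarithms of possibly vanishing densities. Inserting the factorization, the univariate factors satisfy \(f_n(x_n\wedge y_n)\,f_n(x_n\vee y_n)=f_n(x_n)\,f_n(y_n)\) because \(\wedge,\vee\) act coordinatewise, so they cancel identically on both sides. For each edge the \(\MTP\) property of \((V_i,V_j)\) gives
\begin{align*}
 f_{ij}(x_i,x_j)\,f_{ij}(y_i,y_j)\le f_{ij}(x_i\wedge y_i,x_j\wedge y_j)\,f_{ij}(x_i\vee y_i,x_j\vee y_j).
\end{align*}
Multiplying these nonnegative inequalities over all \(e\in E\) yields \(f(x)f(y)\le f(x\wedge y)f(x\vee y)\), i.e. \(\log f\) is supermodular and \(V\) is \(\MTP\). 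In the smooth, strictly positive case this is the one-line observation that \(\tfrac{\partial^2}{\partial x_k\partial x_\ell}\log f=\tfrac{\partial^2}{\partial x_k\partial x_\ell}\log f_{k\ell}\ge 0\) if \(\{k,\ell\}\in E\) and \(=0\) otherwise, the univariate terms never contributing to mixed partials.

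The main obstacle is the bookkeeping around support and positivity, since the denominator and the passage to logarithms are only meaningful where all \(f_n>0\). I would handle this by arguing via the lattice inequality rather than logarithms: if \(f(x)f(y)=0\) the inequality is trivial, while if \(f(x),f(y)>0\) then every factor is positive, the univariate factors cancel, and the edge inequalities multiply cleanly; the sublattice property of each \(\MTP\) edge law then guarantees positivity at the meet and join. The only remaining formal point, justifying the factorization from the conditional-independence Definition \ref{defMTD}, is standard for decomposable graphical models and can alternatively be obtained by iterating the two-node Markov factorization outward from the root. This is precisely the tree specialization of \cite[Proposition 7.1]{Fallat-2017}.
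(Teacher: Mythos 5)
Your proof is correct, but it takes a genuinely different route from the paper, which gives no argument for Lemma \ref{lemMTP2} at all: it simply invokes \cite[Proposition 7.1]{Fallat-2017}, a result for general decomposable graphical models. You reconstruct the argument behind that citation in the tree case: the clique--separator factorization (equivalently, the telescoped product of conditional densities coming from the construction in Proposition \ref{prop: existence of markov tree dirstribution}), the exact cancellation of the univariate factors under \(\wedge\) and \(\vee\), and the termwise multiplication of the edge inequalities, with the case distinction \(f(x)f(y)=0\) versus \(f(x)f(y)>0\) replacing the passage to logarithms. This is sound: the cancellation \(f_n(x_n\wedge y_n)\,f_n(x_n\vee y_n)=f_n(x_n)\,f_n(y_n)\) is an exact identity for any fixed version of \(f_n\), and positivity of the meet/join factors is forced by the edge inequalities themselves, as you note. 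The trade-off between the two routes: the paper's citation is economical and inherits the generality of \cite{Fallat-2017} (arbitrary decomposable graphs, a framework not tied to Lebesgue densities), whereas your argument is self-contained, shows exactly where the tree structure enters (a mixed partial of \(\log f\) sees at most one edge term and no univariate terms), and makes the absolute-continuity and support issues explicit. The one point you should still pin down is the choice of versions: a node \(n\) lying on several edges has one marginal density per incident edge, and these agree only Lebesgue-almost everywhere; you should fix a single version \(f_n\) per node (and the \(\MTP\) versions \(f_{ij}\) per edge) and observe that the resulting formula, extended by \(0\) wherever some \(f_n\) vanishes, is still a version of the joint density, since altering each \(f_n\) on a one-dimensional null set only changes the formula on a Lebesgue-null subset of \(\R^{d+1}\). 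After that, your pointwise lattice inequality goes through verbatim.
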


Combining the above lemmas and Theorem \ref{thm: supermodular order of tree specifications generalized}, we establish the following \(\leq_{dcx}\)-comparison result for Markov tree distributions, which allows a comparison of the marginal distributions in convex order. Note that the bivariate copula specifications of \(Y\) are now assumed to satisfy the stronger positive dependence concept \(\MTP.\) 

\begin{theorem}[Comparison of marginals in convex order]\label{themaindcx}~\\
For a tree \(T=(N,E),\)
let $X = (X_n)_{n\in N}\sim\cM(F,T,B) $ and $Y = (Y_n)_{n\in N} \sim\cM(G,T,C)$ be Markov tree distributed sequences of random variables.
Assume for all \(n\in N\) that the marginal distribution functions \(F_n\) and \(G_n\) are continuous. 
If for all \(e=(i,j)\in E,\)
\begin{enumerate}[(i)]
    \item \label{themaindcx1} $B_{e}$ is SI if $j\neq k^*$ for some fixed child $k^*$ of the root (i.e., for \((0,k^*)\in E\)),
    \item \label{themaindcx2} $C_e$ is \(\MTP\),
    \item \label{themaindcx3} \(B_e \leq_{lo} C_e\) (resp. \(\geq_{lo}\)),
\end{enumerate}
then \(F_n\leq_{cx} G_n\) (resp. \(\geq_{cx}\)) for all \(n\in N\) implies \(X\leq_{dcx} Y\) (resp. \(\geq_{dcx}\)).
\end{theorem}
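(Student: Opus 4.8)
The plan is to interpolate a third Markov tree distribution that shares its marginals with $X$ and its edge copulas with $Y$, so that the comparison splits into a pure dependence step (fixed marginals) and a pure marginal step (fixed copula). Concretely, I introduce $W=(W_n)_{n\in N}\sim\cM(F,T,C)$ and establish the chain $X\leq_{dcx}W\leq_{dcx}Y$ (resp.\ $\geq_{dcx}$), concluding by transitivity of the directionally convex order.

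For the first link, $X$ and $W$ have identical marginals $F$, so for every edge $e=(i,j)\in E$ the pairs $(X_i,X_j)$ and $(W_i,W_j)$ share the marginals $F_i,F_j$, and by \eqref{eqbivord} the hypothesis $B_e\leq_{lo}C_e$ is equivalent to $(X_i,X_j)\leq_{sm}(W_i,W_j)$, which is condition \eqref{thm13}. Assumption \eqref{themaindcx1} that $B_e$ is SI for $j\neq k^*$ is exactly condition \eqref{thm11}, and assumption \eqref{themaindcx2} together with the implication $\MTP\Rightarrow$ CI in \eqref{implposdepcon} shows that each $C_e$ is CI, so both directional stochastic monotonicity requirements in \eqref{thm12} hold for $W$ irrespective of the $L$- and $P$-restrictions. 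Theorem \ref{thm: supermodular order of tree specifications generalized} then yields $X\leq_{sm}W$, and since $\leq_{sm}$ implies $\leq_{dcx}$ by \eqref{smpdo}, the first link follows.

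For the second link, $W$ and $Y$ carry the same edge copulas $C$; since all marginals are continuous, the copula of a Markov tree distribution depends only on $(T,C)$ (this is precisely the fact used for Theorem \ref{thm: maintwo}; see Remark \ref{rem38}\eqref{rem38b}), so $W$ and $Y$ possess a common copula, namely the Markov tree distribution with uniform marginals and edge copulas $C$. On this uniform-margin model the bivariate margin attached to $e$ is exactly $C_e$, whose joint density is the copula density of $C_e$ and hence log-supermodular, so this bivariate margin is $\MTP$; Lemma \ref{lemMTP2} upgrades this to $\MTP$ of the whole common copula, which is therefore CI by \eqref{implposdepcon}. With a common CI copula and $F_n\leq_{cx}G_n$ for all $n\in N$, Lemma \ref{lemcCI} delivers $W\leq_{dcx}Y$, completing the chain.

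The reverse assertion follows from the same two links: one applies the $\geq_{sm}$-version of Theorem \ref{thm: supermodular order of tree specifications generalized} (the SI structure on $X$ and $W$ in \eqref{thm11}--\eqref{thm12} is unchanged, only \eqref{thm13} is reversed) to get $W\leq_{dcx}X$, and Lemma \ref{lemcCI} with the marginal order $G_n\leq_{cx}F_n$ to get $Y\leq_{dcx}W$. The step I expect to be the main obstacle is the second link, and specifically the mismatch that $\MTP$ is only meaningful for absolutely continuous laws while the hypotheses grant only continuous—not necessarily absolutely continuous—marginals. This is exactly why I pass to the common copula: there the edge margins inherit genuine log-supermodular densities from the $C_e$, so that Lemma \ref{lemMTP2} and the implication $\MTP\Rightarrow$ CI apply and feed directly into the common-copula hypothesis of Lemma \ref{lemcCI}.
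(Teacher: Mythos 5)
Your proposal is correct and takes essentially the same route as the paper: the paper's proof likewise interpolates through $Z\sim\cM(F,T,C)$, derives $X\leq_{sm}Z$ from Theorem \ref{thm: supermodular order of tree specifications generalized} (using $\MTP\Rightarrow\CIns$ so that condition \eqref{thm12} holds trivially and the path $P$ can avoid $k^*$), identifies the common copula of $Z$ and $Y$ via Lemma \ref{prop: Copula of bivariate tree specifications} under the continuity assumption, and concludes with Lemma \ref{lemMTP2}, Lemma \ref{lemcCI}, and transitivity of $\leq_{dcx}$. Your explicit remark that $\MTP$ is invoked only at the copula level (where the edge margins are exactly the $C_e$) is precisely the reason the paper's argument is sound, even though the paper leaves this point implicit.
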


\begin{remark}
By the continuity assumption on the marginal distribution functions in Theorem \ref{themaindcx}, the copula of \(X\) is the distribution function of \((F_n(X_n))_{n\in N}\), and the copula of \(Y\) is the distribution function of \((G_n(Y_n))_{n\in N}.\) 
In the proof of Theorem \ref{themaindcx}, we apply Lemma \ref{lemMTP2} and compare the marginals of \(X\) and \(Y,\) using that \((G_n(Y_n))_{n\in N}\) is \(\MTP\) (and thus CI) as a consequence of the \(\MTP\) assumptions on \(C_e,\) \(e\in E.\)
\end{remark}

\section{A special property of star structures}\label{secSM}
In this section, we study a significant extension of the comparison results for star structures in Lemma \nolinebreak \ref{lemfacmod} to a comparison of general dependencies. 
This extension builds on the recently established Schur order for conditional distributions which compares conditional distribution functions of bivariate random vectors in terms of their strength of dependence, measured by the variability in the conditioning variable. 
The Schur order for conditional distributions has the fundamental properties that minimal elements characterize independence (i.e., no variability in the conditioning variable) and maximal elements characterize perfect dependence (i.e., maximal variability of the decreasing rearrangements); see \cite{Ansari-Fuchs-2022,Ansari-Rockel-2024}. Here a random variable \(Y\) is said to \emph{perfectly depend} on \(X\) if there exists a Borel measurable function \(f\) (which is not necessarily increasing or decreasing) such that \(Y=f(X)\) almost surely.
Note that perfect dependence is not a symmetric concept, i.e., perfect dependence of \(Y\) on \(X\) does not imply perfect dependence of \(X\) on \(Y.\) 
 From Lemma \ref{lemfacmod} on Markovian star structures, we know that strong positive dependence between \(Y_n\) and \(Y_0\) for all \(n\in \{1,\ldots,n\}\) leads to strong positive dependence among \((Y_1,\ldots,Y_d)\) in the sense of the supermodular order. Similarly, as can easily be verified, strong negative dependence between each \(Y_n\) and \(Y_0\) also implies strong positive dependence among \((Y_1,\ldots,Y_d).\)

 Now, assume that, for every \(n\in \{1,\ldots,d\},\) \(X_n\) is less dependent on the common factor variable \(X_0\) than \(Y_n\) on \(Y_0\) in the sense of the Schur order for conditional distributions and assume that \((Y_n,Y_0)\) has positive dependence in some sense.  Then, a fairly intuitive result for Markovian star structures (Proposition \ref{propSch}) states that \((Y_1,\ldots,Y_d)\) exhibits stronger positive dependence than \((X_1,\ldots,X_d).\) 
Similarly, one might expect for Markov processes that stronger dependence (in the sense of the Schur order) among \((Y_n,Y_{n+1})\) compared to \((X_n,X_{n+1})\) for all \(n\in \N\) would result in stronger dependence among \((Y_n)_{n\in \N}\) compared to \((X_n)_{n\in \N}\)---at least when all \((Y_n,Y_{n+1})\) are conditionally increasing. 
Surprisingly, as we show below, comparison results with respect to the Schur order for conditional distributions cannot be extended from star structures to chain structures and, consequently, not to Markov tree distributions. 
In other words, only for star structures, greater variability in the conditioning variable increases the strength of (positive) dependence of the entire vector in the supermodular order.
Hence, Theorem \ref{thm: supermodular order of tree specifications generalized} is also general in the sense that there is no direct extension to the Schur order for conditional distributions, which we formally define as follows; see \cite{Ansari-Fuchs-2022}.

For integrable functions \(f,\,g\colon (0,1)\to \R\) the \emph{Schur order} \(f\prec_S g\) is defined by
\begin{align}\label{defschurfun}
\begin{split}
f\prec_S g \quad \colon\Longleftrightarrow \quad \int_0^x f^*(t)d \lambda(t) &\leq \int_0^x g^*(t)d \lambda(t) ~~~\text{for all } x\in (0,1) \text{ and} \\ ~~~
\int_0^1 f^*(t)d \lambda(t) &= \int_0^1 g^*(t)d\lambda(t),
\end{split}
\end{align}
where \(h^*\) denotes the decreasing rearrangement 
of an integrable function \(h\colon (0,1) \linebreak\to \R,\) i.e., the essentially uniquely determined decreasing function \(h^*\) such that \(\lambda(h^*\geq w)=\lambda(h\geq w)\) for all \(w\in \R,\) 
where \(\lambda\) denotes the Lebesgue measure on \((0,1);\) 
see e.g. \cite[Section 3.2]{Rueschendorf-2013}.
It is immediately clear that minimal elements in the Schur order are constant functions while, in general, maximal elements do not exist.

The Schur order for conditional distributions is defined by comparing conditional distribution functions in their conditioning variable with respect to the Schur order for functions as defined in \eqref{defschurfun}. To this end, denote by \(q_W\) the (generalized) \emph{quantile function} of a random variable \(W,\) i.e., \(q_W(t):=\inf\{x\in \R\mid F_W(x)\geq t\}\) for \(t\in (0,1).\)

\begin{definition}[Schur order for conditional distributions]~\\
    Let \((U,V)\) and \((U',V')\) be bivariate random vectors with \(V\eqd V'.\) Then the Schur order for conditional distributions is defined by 
\begin{align}\label{defSchurOrder}
    (V|U) \leq_S (V'|U') ~~ \colon \Longleftrightarrow ~~ F_{V|U=q_U(\cdot)}(v) \prec_S F_{V'|U'=q_{U'}(\cdot)}(v) ~~ \text{for all } v\in [0,1].
\end{align}
\end{definition}

By definition, the Schur order for conditional distributions is invariant under rearrangements of the conditioning variable. It compares the variability of conditional distribution functions in the conditioning variables in the sense of the Schur order for functions. Minimal elements characterize independence and maximal elements characterize perfect directed dependence; see \cite[Theorem 3.5]{Ansari-Fuchs-2022}. Further, the Schur order for conditional distributions has the property that, for \(U\eqd U'\) and \(V'\uparrow_{st} U',\) 
\begin{align}\label{propSchurOrder}
      (V|U)\leq_S (V'|U')  \quad \Longrightarrow \quad (U,V)\leq_{sm} (U',V'),
\end{align}
i.e., less variability of the conditional distribution function \(u\mapsto F_{V|U=u}(v)\) in the conditioning variable than \(u\mapsto F_{V'|U'=u}(v)\) in sense of \eqref{defschurfun}  for all \(v\) implies that \((U,V)\) is smaller or equal in the supermodular order than \((U',V')\), provided \(V'\) is stochastically increasing in \(U';\)
see \cite[Proposition 3.17]{Ansari-Rueschendorf-2021}.
Note that in \eqref{propSchurOrder}, there is no positive dependence assumption on \((U,V).\)
If, additionally, \(V\uparrow_{st} U,\) then the reverse direction in \eqref{propSchurOrder} also holds true; see \cite[Proposition 3.4]{Ansari-Fuchs-2022}. In this case, the Schur order is equivalent to the supermodular order, bringing us back to the setting of Lemma \ref{lemfacmod}.

The following result is a version of \cite[Corollary 4(i)]{Ansari-Rueschendorf-2023} and extends \eqref{propSchurOrder} to a vector of conditionally independent random variables. It states that
strengthening the supermodular ordering condition \eqref{lemfacmod3} to the Schur order allows dropping the positive dependence assumption \eqref{lemfacmod1} of Lemma \ref{lemfacmod} on \((X_0,X_i),\) \(i\in \{1,\ldots,d\}.\)

\begin{proposition}[Comparison of star structure based on Schur order]\label{propSch}~\\
Let \(X=(X_0,\ldots,X_d)\) and \(Y=(Y_0,\ldots,Y_d)\) be random vectors such that \(X_1,\ldots,X_d\) are conditionally independent given \(X_0\) and such that \(Y_1,\ldots,Y_d\) are conditionally independent given \(Y_0\) with \(X_0\eqd Y_0.\) Assume for all \(i\in \{1,\ldots,d\}\) that
\begin{enumerate}[(i)]
    \item\label{propSch2} \(Y_i\uparrow_{st} Y_0,\) 
    \item\label{propSch3} \((X_i|X_0)\leq_{S} (Y_i|Y_0).\)
\end{enumerate}
Then it follows that \(X\leq_{sm} Y.\) In particular, \(Y\) is PSMD. 
\end{proposition}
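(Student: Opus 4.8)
The plan is to bridge the bivariate Schur-order building block \eqref{propSchurOrder} and the multivariate comparison for conditionally increasing factor models in Lemma~\ref{lemfacmod}, by reducing the (possibly non-monotone) vector $X$ to a stochastically increasing one through rearrangement in the conditioning variable. As a preliminary observation, since $X_0\eqd Y_0$, $Y_i\uparrow_{st}Y_0$, and $(X_i\mid X_0)\leq_S(Y_i\mid Y_0)$, property \eqref{propSchurOrder} already yields $(X_0,X_i)\leq_{sm}(Y_0,Y_i)$ for every $i\in\{1,\dots,d\}$. This controls the interaction of each component with the factor, but it does not suffice on its own: the supermodular order does not lift from the bivariate pairs to the full vector without a monotonicity hypothesis on the smaller vector, and $X$ need not be SI.

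To supply the missing monotonicity I would introduce the rearranged vector $\widehat X=(\widehat X_0,\widehat X_1,\dots,\widehat X_d)$, defined on the same factor $\widehat X_0=X_0$ by taking, given $\widehat X_0=x_0$, conditionally independent components whose conditional distribution functions are the essentially unique rearrangements of $x_0\mapsto F_{X_i\mid X_0=x_0}$ that are increasing in the conditioning level. By construction $\widehat X$ is SI, i.e.\ $\widehat X_i\uparrow_{st}\widehat X_0$; moreover, rearranging in the conditioning variable preserves the marginal law of each $X_i$ (the rearrangement preserves the integral against the uniform factor level) and, by the rearrangement invariance of the Schur order, preserves the Schur class of each conditional distribution. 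Hence $\widehat X_0\eqd X_0\eqd Y_0$ and $(\widehat X_i\mid\widehat X_0)\leq_S(Y_i\mid Y_0)$, so \eqref{propSchurOrder} (with target $Y$ SI) gives $(\widehat X_0,\widehat X_i)\leq_{sm}(Y_0,Y_i)$. Applying Lemma~\ref{lemfacmod} to the two SI factor models $\widehat X$ and $Y$—whose SI hypothesis on the smaller vector is now met by $\widehat X$—yields $\widehat X\leq_{sm}Y$.

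It then remains to prove the single inequality $X\leq_{sm}\widehat X$, and this is the heart of the matter. Conditioning on the factor level $S$ (uniform, with $X_0=\widehat X_0=q_{X_0}(S)$) turns both sides into integrals of the supermodular integrand $(s,x_1,\dots,x_d)\mapsto f(q_{X_0}(s),x_1,\dots,x_d)$ against the product kernels $\bigotimes_i F_{X_i\mid X_0=q_{X_0}(s)}$ and $\bigotimes_i\widehat F_{i,s}$, the passage from $X$ to $\widehat X$ being precisely the rearrangement of each kernel to be stochastically increasing in $s$. Thus $X\leq_{sm}\widehat X$ is a rearrangement inequality stating that comonotone alignment of the conditional kernels with the common factor maximizes the expectation of a supermodular function. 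I expect this to be the main obstacle, precisely because it cannot be reduced to a one-component-at-a-time argument: integrating out any coordinate that is not yet SI destroys the supermodularity of the reduced bivariate integrand, so the rearrangement must be carried out simultaneously in all coordinates. I would establish it by a global coupling/transport argument in the conditioning variable, exploiting the conditional independence given the factor together with the rearrangement invariance of the Schur order—this being exactly the technical content imported from \cite[Corollary 4(i)]{Ansari-Rueschendorf-2023}. Transitivity of $\leq_{sm}$ then gives $X\leq_{sm}\widehat X\leq_{sm}Y$.

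Finally, the PSMD conclusion follows by specialization. Taking $X=Y^{\perp}$, the independent vector with the same one-dimensional marginals as $Y$, each $Y_i^{\perp}$ is independent of $Y_0^{\perp}$, so its conditional distribution function is constant in the conditioning variable and is therefore a minimal element in the Schur order; consequently $(Y_i^{\perp}\mid Y_0^{\perp})\leq_S(Y_i\mid Y_0)$ holds trivially, while $Y_0^{\perp}\eqd Y_0$ and $Y_i\uparrow_{st}Y_0$ are in force. The comparison just proved then reads $Y^{\perp}\leq_{sm}Y$, which is exactly the statement that $Y$ is PSMD.
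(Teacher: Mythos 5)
Your preparatory steps are all sound: the rearranged vector $\widehat X$ is well defined (it is the increasing-rearrangement-of-a-copula construction of \cite{Ansari-Rueschendorf-2021,strothmann2022} recalled in Remark~\ref{remdisasu}), it preserves each marginal law and each Schur class $(X_i\mid X_0)$, so \eqref{propSchurOrder} together with Lemma~\ref{lemfacmod} indeed yields $\widehat X\leq_{sm}Y$; and your derivation of the PSMD conclusion from Schur-minimality of constant conditional distribution functions is exactly right. The genuine gap is the inequality $X\leq_{sm}\widehat X$, which you correctly call the heart of the matter and then do not prove. Note that this inequality \emph{is} Proposition~\ref{propSch} in the special case $Y=\widehat X$ (one has $X_0\eqd\widehat X_0$, $\widehat X_i\uparrow_{st}\widehat X_0$, and $(X_i\mid X_0)\leq_S(\widehat X_i\mid\widehat X_0)$), so the reduction has not decreased the difficulty, and your own observation that integrating out a non-SI coordinate destroys supermodularity shows it cannot be reached by iterating the bivariate result \eqref{propSchurOrder} coordinate by coordinate. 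The fallback you offer---that this step is ``the technical content imported from \cite[Corollary 4(i)]{Ansari-Rueschendorf-2023}''---does not fill the gap so much as make the detour redundant: that corollary is formulated for conditionally independent factor models with Schur-ordered conditional specifications and an SI target, hence applies verbatim to the pair $(X,Y)$ itself. That single application is in fact the paper's entire proof: it rewrites $F_{X_i|X_0=q_{X_0}(u)}(x)=\partial_2^{F_{X_0}}C_{X_i,X_0}(F_{X_i}(x),u)$ outside a Lebesgue-null set using \cite[Theorem 2.2]{Ansari-Rueschendorf-2021}, observes that assumption \eqref{propSch3} transfers to the Schur order $\prec_S$ of these partial derivatives, and invokes the corollary.

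If you want your two-step route to stand as a genuine, self-contained alternative, the missing step can be closed without \cite{Ansari-Rueschendorf-2023}, but by a different tool than a ``coupling/transport in the conditioning variable'': write $X=\bigl(q_{X_0}(U),\,q_{X_1|X_0=q_{X_0}(U)}(V_1),\dots,q_{X_d|X_0=q_{X_0}(U)}(V_d)\bigr)$ with $U,V_1,\dots,V_d$ i.i.d.\ uniform, verify that for each fixed $v$ the map $u\mapsto q_{\widehat X_i|\widehat X_0=q_{X_0}(u)}(v)$ is (a.e.) the increasing rearrangement of $u\mapsto q_{X_i|X_0=q_{X_0}(u)}(v)$, and then, for each fixed $(v_1,\dots,v_d)$, bound the inner expectation over $U$ by the classical fact that the comonotone coupling---all components increasing functions of the common $U$---maximizes expectations of supermodular functions within a Fr\'echet class \cite{Tchen-1980}; integrating over $(v_1,\dots,v_d)$ gives $X\leq_{sm}\widehat X$. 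Be aware that your phrase ``rearrangement of each kernel'' hides the subtlety this argument must respect: the rearrangement is performed separately for every threshold $v$, with possibly different measure-preserving maps, so a kernel of $\widehat X$ at a given factor level is in general not one of the kernels of $X$; this is precisely why a transport of factor levels alone cannot work, whereas the threshold-wise quantile/rearrangement argument does.
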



In the following remark, we discuss the distinctive property of star structures that enables the above generalization of Theorem \ref{thm: supermodular order of tree specifications generalized} to the Schur order.

\begin{remark}
\begin{enumerate}[(a)]
    \item 
    Since \((V|U)\leq_S (V'|U')\) is equivalent to \((U,V)\leq_{sm} (U',V')\) whenever \(U\eqd U',\) \(V\uparrow_{st} U\) and \(V'\uparrow_{st} U',\) 
    Proposition \ref{propSch} extends Lemma \ref{lemfacmod} to a broader class of conditionally independent distributions that allow a non-positive dependence structure for \(X.\) 
    Surprisingly, as we show in Example \ref{exp:d}, Proposition \nolinebreak \ref{propSch} cannot be extended from star structures to Markov tree distributions: 
    To be precise, let $T=(N,E)$ for $N=\{0,1,2,3\}$ and $E=\{(0,1),(1,2),(2,3)\}$ be a chain of \(4\) nodes. Then, we can construct Markov tree distributed random variables \(X=(X_n)_{n\in N}\) and \(Y=(Y_n)_{n\in N}\) such that 
    \begin{itemize}
        \item $(X_i,X_{i+1})\leq_{sm}(Y_i,Y_{i+1})$ for all $i\in \{0,1,2\}$,
        \item \((Y_i,Y_{i+1})\) being \(\MTP\) for all $i\in \{0,1,2\}$, and
        \item \((X_j|X_i)\leq_{S}(Y_j|Y_i)\) for all \((i,j)\in E,\)
    \end{itemize} 
    but \(X\not\leq_{lo} Y .\) Hence,  we obtain that less variability of the bivariate specifications \(\{(X_i,X_j)\}_{(i,j)\in E}\) compared to \(\{(Y_i,Y_j)\}_{(i,j)\in E}\)
    with respect to the Schur order, as in \eqref{defSchurOrder}, does \emph{not} imply that the entire vector \(X=(X_n)_{n\in N}\) is smaller than \(Y=(Y_n)_{n\in N}\) in the lower orthant order and thus neither in the supermodular order---even if the specifications \((Y_i,Y_j)_{(i,j)\in E}\) satisfy the strong positive dependence concept \(\MTP;\) see also Figure \ref{fig:counterexample markov}\,d). Consequently, Proposition \ref{propSch} on the rearrangement-based Schur order has no direct extension to Markov tree distributions.
    \item The Schur order for conditional distributions has the fundamental property that it implies an ordering of various dependence measures. A dependence measure is a functional \((X,Y)\mapsto\kappa(Y,X)\) such that (i) \(\kappa(Y,X)\in [0,1],\) (ii) \(\kappa(Y,X)=0\) if and only if \(X\) and \(Y\) are independent, and (iii) \(\kappa(Y,X)=1\) if and only if \(Y\) is perfectly dependent on \(X.\)
    For instance,
    Chatterjee's rank correlation, which has recently attracted a lot of attention in the statistics literature \cite{chatterjee2021,chatterjee2021jasa,Dette-2013}, is consistent with the Schur order for conditional distributions; see \cite{Ansari-Fuchs-2022}.
    By Proposition \ref{propSch}, the Schur order also implies comparison results with respect to the supermodular order.  Roughly speaking, large elements in the Schur order for conditional distributions lead to strong dependencies and, if the dependencies are positive, to large elements in supermodular order. 
\end{enumerate}
\end{remark}

\section[Robust hidden Markov models]{Distributional robustness in hidden Markov models}\label{secHidMarkov}

In this section, we apply our comparison results to hidden Markov models (HMMs), which form a subclass of Markov tree distributions. 
Using Theorem \ref{thm: supermodular order of tree specifications generalized} and the results presented in Section \ref{sec3} allows us to study distributional robustness of various functionals under ambiguity.
As an illustrative example, we consider the maximum of a perturbed random walk and derive uncertainty bands for its distribution function in a setting with model uncertainty.
For a comprehensive overview of hidden Markov models, including the subsequent definitions, we refer to \cite{cappe2009inference, ephraim2002hidden}, along with the literature referenced therein.

 Any HMM \((X,X^*)\) has a functional representation, known as a (general) state-space model, by
\begin{align}\begin{split}\label{defHMM}
    X_{n} &= f_n(X_{n-1},\delta_{n})\quad P\text{-almost surely for } n\in \N,\\
    X_{n}^* &= f_n^*(X_n,\varepsilon_{n}) \quad P\text{-almost surely for } n\in \N_0,
    \end{split}
\end{align}
for some measurable functions \(f_n,f^*_n\colon \R^2\to \R\) and i.i.d.\@ uniformly on $[0,1]$ distributed random variables \(\{\delta_n\}_{n\in\N},\{\varepsilon_n\}_{n\in\N_0},\) that are independent of the initial random variable $X_0$. In the language of Markov tree distributions, a HMM can equivalently be written as a sequence $Z=(Z_n)_{n\in\mathbb N_0}$ of random variables defined by $ Z_{2n}=X_{n}$ and $ Z_{2n+1}=X^*_n,$ for all $n\in\mathbb N_0$,
where \(Z\) follows a Markov tree distribution with respect to the tree 
\begin{align}\label{eq: edges HMM}
 T=(N,E), \text{ with } N = \N_0 \text{ and } E=\big\{(i,j)\in 2\mathbb N_0\times \mathbb N_0\, \vert\,  j-i\in \{1,2\}\big\};   
\end{align}
see Figure \ref{figHMM} for an illustration of the underlying tree structure.

There are two perspectives on the interpretation and application of HMMs. First, in fields such as communication theory, one can view the hidden process $X$ as a signal transmitted via a communications channel. Given the inherent noise in the channel, the receiver perceives the distortion $X^*$ and aims to reconstruct the original signal \(X\); see e.g. \cite{ephraim2002hidden, kailath1998detection}. 
Second, in many models, such as in finance \cite{Mamon-2007,Rossi-2006}, economics \cite{Hamilton-1989}, climatology \cite{Ailliot-2015} or reliability theory \cite{Smyth-1994}, the focus lies directly on the observable process $X^*$, which is driven by a (latent) factor process $X$, the hidden process. For instance, $X^*$ may describe the market price of a stock, while $X$ models an economic factor influencing price fluctuations. 
In particular, any ARMA process has a representation of the form \eqref{defHMM} with linear functions \(f\) and \(f^*;\) see \nolinebreak\cite{Akaike-1974}. 
In the sequel, we adopt the latter perspective, focusing on drawing inferences from the observations of the hidden process.

As an interesting quantity, we consider the maximum of the observations \(X_0^*,\ldots,X_n^*,\) noting that the maximum of a stochastic process is closely related to first passage times. Specifically, the maximum process exceeds a threshold \(s\) at a certain time \(n\) if and only if the first time where the underlying stochastic process enters the interval \((s,\infty)\) is less or equal than \(n.\) 
For an overview of recent literature and for various applications related to first passage times and the maximum of a perturbed random walk, we refer \cite{Araman-2005,Araman-2006,Iksanov-2016,Iksanov-2025,Iksanov-2017a,Iksanov-2017b}.

We make use of our comparison results to establish distributional robustness of \(\max_{0\leq k\leq n}\{X_k^*\}\) in stochastic order under ambiguity of \(X^*.\) 
Our findings are illustrated through uncertainty bands for the associated distribution function \(F_{\max_{0\leq k\leq n}\{X_k^*\}}.\)
Key is that the function 
\begin{align}\label{eqindmax}
    f\colon (x_0,\ldots,x_n) \mapsto \1_{\{\max \{0, x_0,\ldots,x_n\}\leq t\}}
\end{align}
is decreasing supermodular.
As a direct consequence of Theorems \ref{thm: supermodular order of tree specifications generalized}, \ref{thm: maintwo} and \ref{themaindcx}, the following result enables the comparison of various functionals (as outlined in Table \ref{table_functions}) within classes of hidden Markov models.

 \begin{corollary}[Comparison results for hidden Markov models]\label{thmHMM}~\\
    Let \((X,X^*)\) and \((Y,Y^*)\) be HMMs. Assume that \(X_n^*\uparrow_{st} X_n\) for all \(n\in \N\) and that \(X_{n+1}\uparrow_{st} X_n,\) \(Y_n\uparrow_{st} Y_{n+1}\) as well as \(Y_n^*\uparrow_{st} Y_n\) for all \(n\in \N_0.\)
\begin{enumerate}[(i)]
    \item \label{thmHMM1} If \((X_{n},X_{n+1})\leq_{sm} (Y_{n},Y_{n+1})\) and \((X_n,X_n^*)\leq_{sm} (Y_n,Y_n^*)\) for all \(n\in \N_0,\) then \((X,X^*)\leq_{sm} (Y,Y^*).\)
    \item \label{thmHMM2} Assume that \(\overline{\Ran(F_{X_n})} = \overline{\Ran(F_{Y_n})}\) and \(\overline{\Ran(F_{X_n^*})} = \overline{\Ran(F_{Y_n^*})}\) for all \(n\in \N_0.\) If \(C_{X_n,X_{n+1}}\leq_{lo} C_{Y_n,Y_{n+1}},\) \(C_{X_n,X_{n}^*}\leq_{lo} C_{Y_n,Y_{n}^*},\) \(X_n\leq_{st} Y_n\) (resp. \(\geq_{st}\)), and \(X_n^*\leq_{st} Y_n^*\) (resp. \(\geq_{st}\)) for all \(n\in \N_0,\) then \((X,X^*)\leq_{ism} (Y,Y^*)\) (resp. \(\leq_{dsm}\)).
    \item Assume that all marginal distribution functions are continuous. If \(C_{X_n,X_{n+1}}\linebreak \leq_{lo} C_{Y_n,Y_{n+1}}\) and \(C_{X_n,X_{n}^*}\leq_{lo} C_{Y_n,Y_{n}^*}\) with \(C_{Y_n,Y_{n+1}}\) and \(C_{Y_n,Y_{n}^*}\) \(\MTP,\) and if \(X_n\leq_{cx} Y_n\) and \(X_n^*\leq_{cx} Y_n^*\) for all \(n\in \N_0,\) then \((X,X^*)\leq_{dcx} (Y,Y^*).\)
\end{enumerate}
\end{corollary}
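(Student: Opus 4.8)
The plan is to recognize Corollary \ref{thmHMM} as the specialization of the three main comparison results---Theorem \ref{thm: supermodular order of tree specifications generalized}, Theorem \ref{thm: maintwo} and Theorem \ref{themaindcx}---to the Markov tree encoding of the HMM described around \eqref{eq: edges HMM}. Accordingly, I would first pass to that encoding, fix the path $P$ and the distinguished root child $k^*$ once and for all, verify the hypotheses of each theorem edge by edge, and finally transport the resulting orders back to $(X,X^*)$ and $(Y,Y^*)$.

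For the setup, write $Z^X=(Z_n^X)_{n\in\N_0}$ with $Z_{2n}^X=X_n$ and $Z_{2n+1}^X=X_n^*$, and define $Z^Y$ from $(Y,Y^*)$ likewise. By the discussion preceding \eqref{eq: edges HMM}, both are Markov tree distributed with respect to the caterpillar tree $T=(N,E)$ of \eqref{eq: edges HMM}, with root $0=X_0$, even ``spine'' nodes carrying the hidden chain and odd ``leaf'' nodes carrying the observations, so $L=2\N_0+1$. Thus $Z^X\sim\cM(F,T,B)$ and $Z^Y\sim\cM(G,T,C)$, where the spine edge $(2n,2n+2)$ carries $B_{(2n,2n+2)}=C_{X_n,X_{n+1}}$ and $C_{(2n,2n+2)}=C_{Y_n,Y_{n+1}}$, the leaf edge $(2n,2n+1)$ carries $B_{(2n,2n+1)}=C_{X_n,X_n^*}$ and $C_{(2n,2n+1)}=C_{Y_n,Y_n^*}$, while the nodes carry $F_{2n}=F_{X_n}$, $F_{2n+1}=F_{X_n^*}$ (analogously for $G$). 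I would take $P$ to be the infinite spine $\{2,4,6,\dots\}$, a path starting at the root child $2$, and, since $P$ is infinite and meets no leaf, $k^*=1$ as the remaining root child.

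The heart of the proof is the edge-by-edge verification of the $\uparrow_{st}$-conditions \eqref{thm11} and \eqref{thm12}. On the $X$-side, ``$B_e$ SI'' for an edge $(i,j)$ reads $Z_j\uparrow_{st}Z_i$; for spine edges this is $X_{n+1}\uparrow_{st}X_n$ and for leaf edges with $n\geq1$ it is $X_n^*\uparrow_{st}X_n$, both standing hypotheses, while the sole edge $(0,1)=(0,k^*)$---for which only the unavailable $X_0^*\uparrow_{st}X_0$ could be asked---is exactly the one exempted by \eqref{thm11}. On the $Y$-side, the spine edges have $j\notin L$, so \eqref{thm12} demands $Y_n\uparrow_{st}Y_{n+1}$, and the leaf edges have $j\notin P$, so \eqref{thm12} demands $Y_n^*\uparrow_{st}Y_n$; again both hold by hypothesis. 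With these verified, part \eqref{thmHMM1} follows from Theorem \ref{thm: supermodular order of tree specifications generalized} using $(X_n,X_{n+1})\leq_{sm}(Y_n,Y_{n+1})$ and $(X_n,X_n^*)\leq_{sm}(Y_n,Y_n^*)$ as \eqref{thm13}; part \eqref{thmHMM2} follows from Theorem \ref{thm: maintwo}, whose range hypothesis $\overline{\Ran(F_m)}=\overline{\Ran(G_m)}$ and $\leq_{lo}$-inequalities are assumed and whose $F_m\leq_{st}G_m$ (resp.\@ $\geq_{st}$) at every node is just $X_n\leq_{st}Y_n$, $X_n^*\leq_{st}Y_n^*$ (resp.\@ $\geq_{st}$); and the final part follows from Theorem \ref{themaindcx}, whose conditions become ``$B_e$ SI for $j\neq k^*$'' (just checked), ``$C_e$ is $\MTP$'' (the $\MTP$ assumptions on $C_{Y_n,Y_{n+1}}$ and $C_{Y_n,Y_n^*}$), $B_e\leq_{lo}C_e$, continuity of all marginals, and the nodewise convex order $F_m\leq_{cx}G_m$ coming from $X_n\leq_{cx}Y_n$, $X_n^*\leq_{cx}Y_n^*$. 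Each theorem delivers the stated order for $Z^X$ versus $Z^Y$; since every order in play compares only finite-dimensional distributions and its generating class is invariant under permutations of coordinates, the relabeling $Z$ carries the conclusion back to $(X,X^*)$ and $(Y,Y^*)$.

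The step I expect to be the main obstacle is conceptual rather than computational: the consistent bookkeeping of the SI directions, i.e.\@ checking that the two edge families realize exactly the arrow pattern required by \eqref{thm11}--\eqref{thm12}, and recognizing that the missing assumption $X_0^*\uparrow_{st}X_0$ is harmless precisely because $X_0^*$ can be designated as $k^*$ and the spine serves as the path $P$. A minor additional point for the final part is that Theorem \ref{themaindcx} imposes no SI condition on $Y$ at all: the $Y$-side arrows in the corollary's preamble are not needed there, being already implied, via \eqref{implposdepcon}, by the $\MTP$ hypotheses on the $Y$-copulas.
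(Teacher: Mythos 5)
Your proposal is correct and follows exactly the paper's route: the paper's own proof is just the compressed version of your argument, namely viewing the HMM as a Markov tree distribution on the caterpillar tree of \eqref{eq: edges HMM} and invoking Theorems \ref{thm: supermodular order of tree specifications generalized}, \ref{thm: maintwo} and \ref{themaindcx}. Your explicit bookkeeping (spine as the path $P$, $k^*=1$ explaining why $X_0^*\uparrow_{st}X_0$ may be dropped, edge-by-edge verification of \eqref{thm11}--\eqref{thm12}) is precisely the verification the paper leaves implicit, and it is carried out correctly.
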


\begin{remark}
    The SI assumptions in the above result are not very restrictive: 
Typically, there is a strong positive dependence between the latent variable \(X_n\) and its perturbed observation \(X_n^*.\) 
A standard example, where the SI condition \(X_n^*\uparrow_{st} X_n\) is fulfilled, is the additive error model \(f_n^*(x,z) = x + \sigma z\) for some \(\sigma \geq 0.\) As with many models for stochastic processes, outcomes that are close in time exhibit strong positive dependence. Consequently, also the SI assumptions \(X_{n+1}\uparrow_{st} X_n\) and \(Y_n\uparrow_{st} Y_{n+1}\) are quite natural. Note that Corollary \ref{thmHMM}\eqref{thmHMM1} can also be interpreted as an extension of Lemma \ref{lemfacmod} to a multi-factor model, where the hidden process serves as a common factor process.
\end{remark}

\begin{figure}[t]
	\begin{tikzpicture}[scale=.85, transform shape, remember picture]
         
        \begin{scope}[shift={(0,0)}] 
			\tikzstyle{every circle node} = []
			\node[circle] (a) at (0, 0) {\(X_0\)};
			\node[circle] (b) at +(0: 1.7) {\(X_1\)};
			\node[circle] (c) at +(0: 3.4) {\(X_2\)};
			\node[circle] (d) at +(0: 5.1) {\(X_3\)};
            \node[circle] (ya) at (a |- 0,-1.5) {\(X_0^*\)};
            \node[circle] (yb) at (b |- 0,-1.5) {\(X_1^*\)};
            \node[circle] (yc) at (c |- 0,-1.5) {\(X_2^*\)};
            \node[circle] (yd) at (d |- 0,-1.5) {\(X_3^*\)};
			\draw [->] (a) -- (b) node[pos=0.5,above]{};
			\draw [->] (b) -- (c) node[pos=0.5,above]{};
			\draw [->] (c) -- (d) node[pos=0.5,above]{};
            \draw [-] (a) -- (ya) node[pos=0.5,right]{};
			\draw [->] (b) -- (yb) node[pos=0.5,right]{};
			\draw [->] (c) -- (yc) node[pos=0.5,right]{};
            \draw [->] (d) -- (yd) node[pos=0.5,right]{\(\ \ \ \cdots\)};
		\end{scope}

  \begin{scope}[shift={(7.8,0)}] 
			\tikzstyle{every circle node} = []
			\node[circle] (a) at (0, 0) {\(Y_0\)};
			\node[circle] (b) at +(0: 1.7) {\(Y_1\)};
			\node[circle] (c) at +(0: 3.4) {\(Y_2\)};
			\node[circle] (d) at +(0: 5.1) {\(Y_3\)};
            \node[circle] (ya) at (a |- 0,-1.5) {\(Y_0^*\)};
            \node[circle] (yb) at (b |- 0,-1.5) {\(Y_1^*\)};
            \node[circle] (yc) at (c |- 0,-1.5) {\(Y_2^*\)};
            \node[circle] (yd) at (d |- 0,-1.5) {\(Y_3^*\)};
			\draw [<-] (a) -- (b) node[pos=0.5,above]{};
			\draw [<-] (b) -- (c) node[pos=0.5,above]{};
			\draw [<-] (c) -- (d) node[pos=0.5,above]{};
            \draw [->] (a) -- (ya) node[pos=0.5,right]{};
			\draw [->] (b) -- (yb) node[pos=0.5,right]{};
			\draw [->] (c) -- (yc) node[pos=0.5,right]{};
            \draw [->] (d) -- (yd) node[pos=0.5,right]{\(\ \ \ \cdots\)};
		\end{scope}
    \end{tikzpicture}
    \caption{The graphs illustrate the SI conditions on the hidden Markov processes \((X,X^*)=(X_n,X_n^*)_{n\in \N_0}\) and \((Y,Y^*)=(Y_n,Y_n^*)_{n\in \N_0}\) which lead to the comparison results in Corollary \ref{thmHMM}, where an arrow \(U \to V\) indicates \(V\uparrow_{st} U.\) The root of the underlying tree corresponds to the variable \(X_0\) and \(Y_0,\) respectively. Note that only the SI condition between \(X_0\) and \(X_0^*\) can be dropped, see Proposition \ref{propSIass}.
 }
	\label{figHMM}
\end{figure}

\subsection{A classical random walk}\label{subsecnrw}

As a starting point, we model the hidden process as a classical random walk with independent and standard normal increments, i.e.,
\begin{align}\label{eq: random walk}
    X_n = \sum_{i=1}^n \xi_i, \quad X_0:=0,
\end{align}
for i.i.d.\@ \(N(0,1)\)-distributed random variables \(\{\xi_i\}_{i\in \N}\).
Then, \(X = (X_n)_{n\in\N_0}\) is a discrete-time Gauss-Markov process specified by the means \(\E X_n=0\) and the covariances \(\Cov(X_i,X_j) = \min\{i,j\}\) for all \(i,j,n\in\N_0.\) Equivalently, by Proposition \ref{prop: existence of markov tree dirstribution}, \(X\) can be considered as a discrete-time Markov process with marginal and bivariate copula specifications 
\begin{align}\label{eqnoisyrwcor0}
    X_n &\sim N(0,n),\\
    \label{eqnoisyrwcor}C_{X_n,X_{n+1}} &= C_{\rho_{n,n+1}}^{\Gauss}, \quad \rho_{n,n+1} := \Cor(X_n,X_{n+1}) = \sqrt{n/(n+1)},
\end{align}
where $C_{\rho}^{\Gauss}$ denotes the Gaussian copula with correlation parameter \(\rho.\) For \(\rho\geq 0,\)  it is well known that $C_{\rho}^{\Gauss}$ is CI, which is equivalent to 
\begin{align}\label{eq: random walk2}
    X_{n+1}\uparrow_{st} X_n \quad \text{and} \quad X_n\uparrow_{st} X_{n+1} \quad \text{for all } n\in \N_0.
\end{align}
The random walk \(X\) is represented by the red paths in Figure \ref{fig:paths}, and the distribution function of the associated maximum \(\max_{k\leq d}\{X_k\}\) is illustrated by the upper (red) graph in Figure \ref{fig:HM_plot} for \(d = 200.\)

\subsection[Perturbed random walk]{A perturbed random walk under model uncertainty}\label{secpertrw}

In the sequel, we model the observation process \(X^* = (X_n^*)_{n\in \N_0}\) by incorporating noise in various ways, and analyze how the noise affects the maximum observations.

\begin{example}[Gaussian error model]\label{exGaussobs}
    As a standard model for noise, we choose independent additive Gaussian errors to model the perturbed random walk \(X^* = (X_n^*)_{n\in \N},\) i.e.,
    \begin{align}\label{eqGausssobs}
        X_n^* = X_n + \sigma_n \varepsilon_n, \quad \sigma_n\in [0,\overline{\sigma}_n],
    \end{align}
    for i.i.d. standard normal random variables \((\varepsilon_i)_{i\in \N}\) independent of \(X\) and for some deterministic \(\overline{\sigma}_n > 0.\) The parameter \(\sigma_n\) is assumed to be ambiguous in the sense that it is only known to lie within the interval \([0,\overline{\sigma}_n].\)
    For the family of models in \eqref{eqGausssobs}, it can easily be verified that
    \begin{align}
        \label{eqGausssobs1} X_n^* &\sim N(0,n+\sigma_n),\\
        \label{eqGausssobs2a} C_{X_n,X_n^*} &= C_{\rho_n}^{\Gauss}, \quad \text{with}\quad \rho_n := \sqrt{n/(n+\sigma_n)}.
    \end{align}
    As a direct consequence of \eqref{eqGausssobs}, the correlation of \((X_n,X_n^*)\) decreases with \(\sigma_n\) and lies within the interval \([\underline{\rho}_n,1]\), for 
   \(
        \underline{\rho}_n := \sqrt{n/(n+\overline{\sigma}_n)}.
    \)
    We use our comparison results to show that, for fixed \(d\in \N,\) \(\max_{n\leq d} \{X_n^*\} \) increases in \(\sigma_n\) with respect to the stochastic order. To this end, let
    \(Y_n^* := X_n + \sigma_n'\varepsilon_n\) for some fixed \(\sigma_n'\leq \sigma_n.\) Then we have
    \begin{align}\label{eqGausssobs2}
    F_{X_n^*}(x) &\geq F_{Y_n^*}(x) \quad \text{for all } x\leq 0,\\
        \label{eqGausssobs3} F_{X_n^*}(x) &\leq F_{Y_n^*}(x) \quad \text{for all } x\geq 0,\\
        \label{eqGausssobs4} C_{X_n,X_n^*} &\leq_{lo} C_{Y_n,Y_n^*}
    \end{align}
    for all \(n\in \N.\) For \eqref{eqGausssobs2} and \eqref{eqGausssobs3}, we use the fact that the univariate normal distribution with zero mean is increasing in its variance parameter with respect to the convex order. Note that \eqref{eqGausssobs3} is equivalent to \(\max\{X_n^*,0\} \geq_{st} \max\{Y_n^*,0\},\) see also Remark \ref{rem38}\eqref{rem38b}. 
    For \eqref{eqGausssobs4}, we use that the Gaussian copula family is increasing in its parameter with respect to the lower orthant order; see e.g. \cite[Table 5]{Ansari-Rockel-2024}. 
    Since the function \(f\) in \eqref{eqindmax} is decreasing supermodular, it follows from Corollary \ref{thmHMM}\eqref{thmHMM2} that
    \begin{align*}
        P(\max_{0\leq n\leq d} \{X_n^*\} \leq t ) &= \E [\1_{\max_{n\leq d} \{X_n^*,0\}\leq t}] \\
        &\leq \E [\1_{\max_{n\leq d} \{Y_n^*,0\}\leq t}] = P(\max_{0\leq n\leq d} \{Y_n^*\} \leq t )
    \end{align*}
    for all \(t\in \R,\)
    using \eqref{eqGausssobs3}, \eqref{eqGausssobs4}, and \eqref{eq: random walk2} as well as \(X_k^*\uparrow_{st} X_k\) and \(Y_k^*\uparrow_{st} Y_k\) for all \(k\in \N_0.\) 
    Hence, defining \(\underline{X}^* = (\underline{X}_n^*)_{n\in \N_0}\) and \(\overline{X}^* = (\overline{X}_n^*)_{n\in \N_0}\) by
    \begin{align}\label{defboundsGau}
        \underline{X}_n^* := X_n \quad \text{and} \quad \overline{X}_n^* := X_n + \overline{\sigma}_n \varepsilon_n,
    \end{align}
    we obtain \(P(\max_n\{\underline{X}_n^*\}>t) \leq P(\max_n\{X_n^*\}>t) \leq P(\max_n \{\overline{X}_n^* > t\})\) for all \(t.\) 
    This implies distributional robustness of the extreme order statistics in stochastic order. 
    While \(\underline{X}^*\) represents the setting without error, the process \(\overline{X}^*\) models the scenario with the largest observation errors, as determined by the parameter bounds \(\overline{\sigma}_n.\)
    In Figure \ref{fig:HM_plot}, we illustrate the uncertainty bands for \(\overline{\sigma}_n = 3\) (left plot) and \(\overline{\sigma}_n = 0.3 n\) 
    (right plot).
\end{example}

\begin{figure}[tb]
	\centering

	\begin{minipage}{0.47\textwidth}
		\centering
		\includegraphics[width=\linewidth]{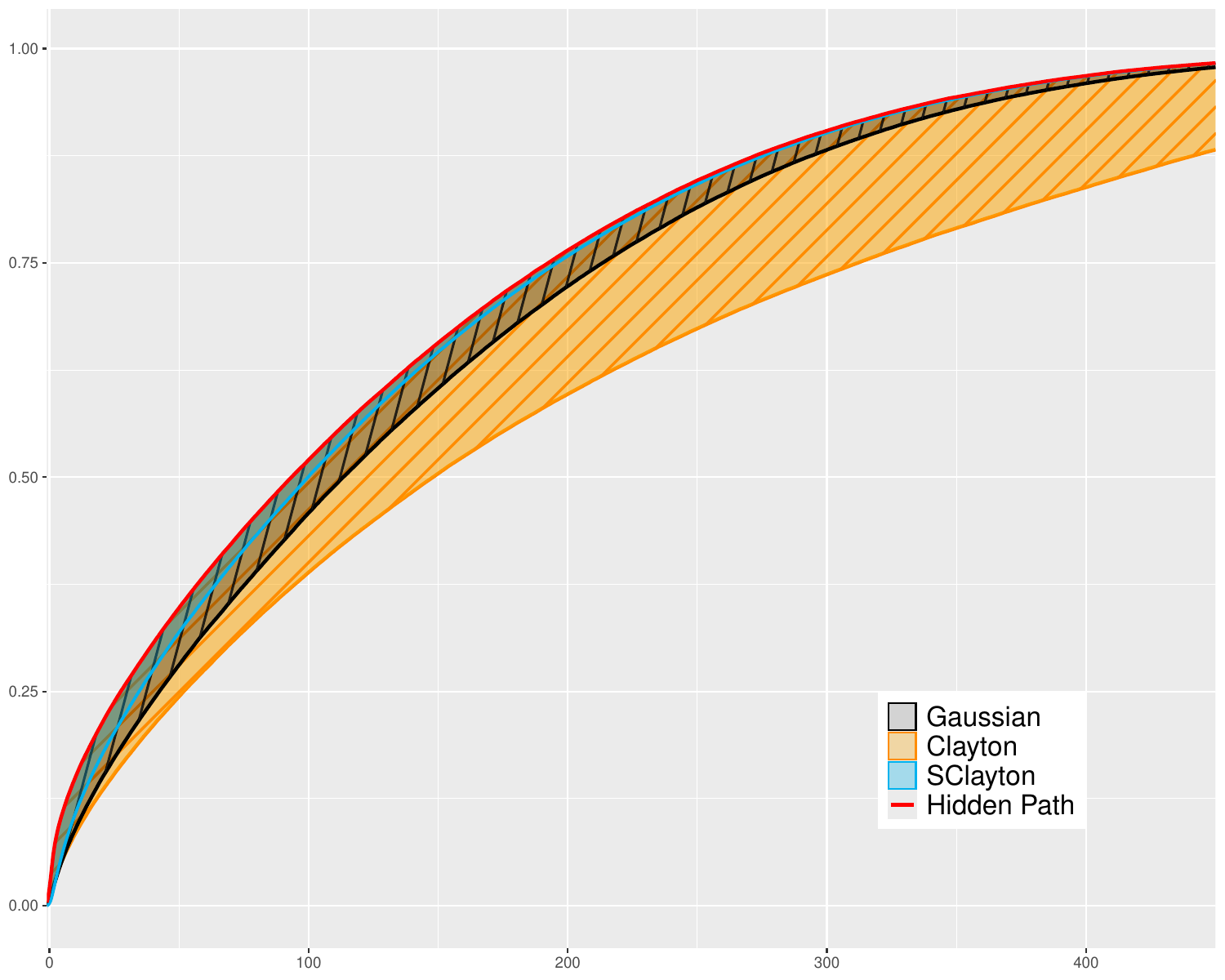}
	\end{minipage}
	\hspace{0.02\textwidth}
	\begin{minipage}{0.47\textwidth}
		\centering
		\includegraphics[width=\linewidth]{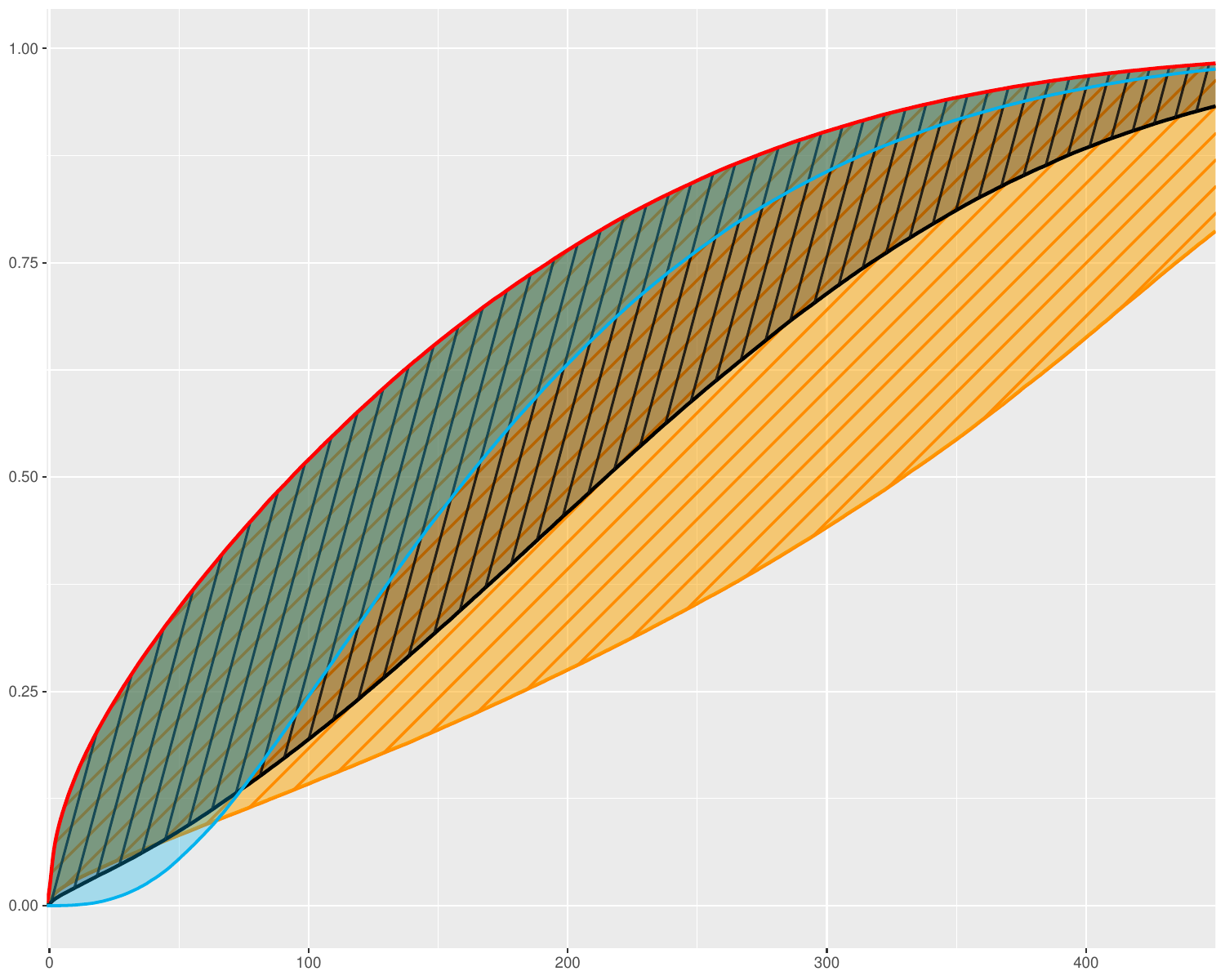}
	\end{minipage}
 \caption{Uncertainty bands of the distribution function $t\mapsto F_{\max\{X_0^*,\ldots,X_d^*\}}(t)$ in the classes of hidden Markov models considered in Theorem \ref{thedisrw} and Examples \ref{exClaycem} and \ref{exsClayton}, where \(d = 200\) and \(\sigma_n = 3\) (left plot) resp. \(\sigma_n = 0.3 n\) (right plot).
 The upper (red) graph is the distribution function of \(\max_{n\leq d}\{\underline{X}_n^*\}\), which corresponds to the hidden Markov model without observation error.
 The black/orange/blue graph corresponds to the distribution function of \(\max_{n\leq d}\{\overline{X}^*_n\}\) in the Gaussian/Clayton/survival Clayton setting, considering maximal uncertainty in Example \ref{exGaussobs}/\ref{exClaycem}/\ref{exsClayton}.
 }
		\label{fig:HM_plot}
\end{figure}

In Example \ref{exGaussobs}, the hidden Markov model \((X,X^*)\) follows a multivariate normal model, for which comparison results are well known; see \cite{Mueller-2001}. 
    However, the bounds \(\underline{X}^* = (\underline{X}_n^*)_{n\in \N}\) and \(\overline{X}^* = (\overline{X}_n^*)_{n\in \N}\) in \eqref{defboundsGau} are applicable to a much more general setting involving the marginal and copula ambiguity sets 
    \begin{align}\label{remGaudus1}
        \cF_n &:= \{F \text{ continuous cdf}\mid F_{N(0,n)}(t) \geq F(t) \geq F_{N(0,n+\overline{\sigma}_n)}(t) ~\forall t\geq 0\},\\
        \label{remGaudus2}\cC_n &:= \{ C \text{ bivariate copula} \mid C \text{ SI and } C\geq_{lo} C_{\underline{\rho}_n}^{Ga}, ~\underline{\rho}_n = \sqrt{n/(n+\overline{\sigma}_n)}\}.
    \end{align}
    Here, \(F_{N(0,n)}\) is the distribution function of \(\underline{X}_n^*\), \(F_{N(0,n+\overline{\sigma}_n)}\) is the distribution function of \(\overline{X}_n^*,\) and \(C_{\underline{\rho}_n}^{Ga}\) is the copula of \((X_n,\overline{X}_n^*).\) 
    The following result summarizes these findings. 
    

\begin{theorem}[Distorted random walk under ambiguity]\label{thedisrw}~\\
Consider the processes \(\underline{X}^*\) and \(\overline{X}^*\) in \eqref{defboundsGau}. Then, for any hidden Markov model \((X,X^*) = (X_n,X_n^*)_{n\in \N_0}\) specified by the random walk \(X\) in \eqref{eq: random walk} and by 
    \(F_{X_n^*}\in \cF_n\) and \(C_{X_n,X_n^*}\in \cC_n\) for all \(n\in \N,\) 
we have
\begin{align}\label{eqthedisrw1}
    \max_{0\leq n\leq d}\{\underline{X}_n^*\} \leq_{st} \max_{0\leq n\leq d}\{X_n^*\} \leq_{st} \max_{0\leq n\leq d}\{\overline{X}_n^*\}.
\end{align}
\end{theorem}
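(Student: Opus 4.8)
The plan is to reduce Theorem~\ref{thedisrw} to an application of Corollary~\ref{thmHMM}\eqref{thmHMM2}, since the latter already packages the required supermodular-type comparison for hidden Markov models. The statement asserts a two-sided stochastic ordering of the maxima, so I would establish the two inequalities in \eqref{eqthedisrw1} separately, each by comparing the general admissible observation process $X^*$ against one of the extreme processes $\underline{X}^*$ and $\overline{X}^*$ defined in \eqref{defboundsGau}.

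\medskip

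First I would fix an arbitrary admissible hidden Markov model $(X,X^*)$ with $F_{X_n^*}\in\cF_n$ and $C_{X_n,X_n^*}\in\cC_n$ and verify that the hypotheses of Corollary~\ref{thmHMM}\eqref{thmHMM2} hold for the pair $(\underline{X}^*,X^*)$ (for the lower bound) and for the pair $(X^*,\overline{X}^*)$ (for the upper bound). The hidden process $X$ is the same Gauss--Markov random walk in all three models, so the SI conditions $X_{n+1}\uparrow_{st}X_n$, $X_n\uparrow_{st}X_{n+1}$ on the hidden chain are exactly \eqref{eq: random walk2}, and the range condition $\overline{\Ran(F_{X_n})}=\overline{\Ran(F_{X_n})}$ is trivial. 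For the observation edges I would check: the SI condition $X_n^*\uparrow_{st}X_n$ follows from $C_{X_n,X_n^*}\in\cC_n$ being SI by definition of $\cC_n$ in \eqref{remGaudus2}; the copula comparison $C_{X_n,X_n^*}\geq_{lo}C_{\underline{\rho}_n}^{Ga}=C_{X_n,\overline{X}_n^*}$ is again part of the definition of $\cC_n$, while the lower bound $\underline{X}_n^*=X_n$ is comonotone with $X_n$ and hence $\leq_{lo}$-dominates every copula; and the marginal comparison in convex order, $F_{N(0,n)}\leq_{cx}F_{X_n^*}\leq_{cx}F_{N(0,n+\overline{\sigma}_n)}$, follows from $F_{X_n^*}\in\cF_n$ together with the fact that zero-mean normals are increasing in variance for $\leq_{cx}$. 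Here I must be careful that $\cF_n$ is phrased via the pointwise ordering of distribution functions for $t\geq 0$ (i.e.\ via $\geq_{st}$ of the positive parts), which is precisely the hypothesis $X_n^*\leq_{st}Y_n^*$ needed in Corollary~\ref{thmHMM}\eqref{thmHMM2} when testing against the decreasing supermodular indicator $f$ in \eqref{eqindmax}.

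\medskip

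With the hypotheses verified, Corollary~\ref{thmHMM}\eqref{thmHMM2} yields $(X,\underline{X}^*)\leq_{dsm}(X,X^*)$ and $(X,X^*)\leq_{dsm}(X,\overline{X}^*)$ in the decreasing supermodular order (the relevant direction, since the indicator $\1_{\{\max\{0,x_0,\dots,x_n\}\leq t\}}$ is decreasing supermodular, not increasing). Applying the functional $f$ from \eqref{eqindmax} to each comparison gives $P(\max_{n\leq d}\{\underline{X}_n^*\}\leq t)\leq P(\max_{n\leq d}\{X_n^*\}\leq t)\leq P(\max_{n\leq d}\{\overline{X}_n^*\}\leq t)$ for all $t$, which is exactly the reversed-CDF statement defining the $\leq_{st}$ chain in \eqref{eqthedisrw1}. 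I would note that $f$ depends only on the observation coordinates $(x_0^*,\dots,x_d^*)$, so the comparison of the full vectors $(X,X^*)$ projects down to the marginal comparison of the observation maxima.

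\medskip

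\textbf{The main obstacle} will be the lower inequality, where the admissible model is compared against the \emph{degenerate} process $\underline{X}_n^*=X_n$: its copula with $X_n$ is the comonotone (Fr\'echet upper bound) copula, which is SI but lies at the boundary of $\cC_n$, and its marginal $N(0,n)$ is the $\leq_{cx}$-minimal element of $\cF_n$. I must confirm that Corollary~\ref{thmHMM}\eqref{thmHMM2} still applies at this boundary---in particular that the convex ordering $N(0,n)\leq_{cx}F_{X_n^*}$ and the copula ordering hold as genuine (possibly non-strict) inequalities, and that the continuity and range assumptions of the corollary are not violated by the degeneracy (the conditional law of $\underline{X}_n^*$ given $X_n$ is a point mass, so the edge copula $C_{X_n,\underline{X}_n^*}$ is the comonotone copula rather than a model introducing new randomness). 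Once this boundary case is seen to satisfy the corollary's hypotheses---as it does, since comonotonicity is SI and $\geq_{lo}$-maximal---both inequalities follow uniformly over the ambiguity sets $\cF_n,\cC_n$, giving the claimed robustness.
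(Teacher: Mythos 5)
Your high-level strategy coincides with the paper's: Theorem \ref{thedisrw} is indeed proved (the work is done in Example \ref{exGaussobs}, which the theorem summarizes) by two applications of Corollary \ref{thmHMM}\eqref{thmHMM2} combined with the decreasing supermodular indicator \eqref{eqindmax}. However, two of your steps fail as written. The first problem is that your intermediate orderings point the wrong way, twice. Corollary \ref{thmHMM}\eqref{thmHMM2} concludes $(X,X^*)\leq_{dsm}(Y,Y^*)$ from $C_{X_n,X_n^*}\leq_{lo}C_{Y_n,Y_n^*}$ together with $\geq_{st}$-ordered marginals. You correctly record that $C_{X_n,\underline X_n^*}=M$ (the comonotonicity copula) dominates every copula in the pointwise order and that $C_{X_n,X_n^*}\geq_{lo}C^{\Gauss}_{\underline\rho_n}=C_{X_n,\overline X_n^*}$; with these orderings the corollary delivers $(X,X^*)\leq_{dsm}(X,\underline X^*)$ and $(X,\overline X^*)\leq_{dsm}(X,X^*)$ --- the reverse of your claims. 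Your assertion $(X,\underline X^*)\leq_{dsm}(X,X^*)$ would require $M\leq_{lo}C_{X_n,X_n^*}$, contradicting your own verification. You then make a second, compensating slip: the CDF chain $P(\max_n\{\underline X_n^*\}\leq t)\leq P(\max_n\{X_n^*\}\leq t)\leq P(\max_n\{\overline X_n^*\}\leq t)$ is equivalent to $\max_n\{\underline X_n^*\}\geq_{st}\max_n\{X_n^*\}\geq_{st}\max_n\{\overline X_n^*\}$, since $A\leq_{st}B$ means $F_A\geq F_B$ pointwise --- not to the $\leq_{st}$ chain you assert. The two sign errors cancel, so your final sentence agrees with \eqref{eqthedisrw1}, but neither intermediate assertion is true as stated.

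The second, more substantive gap is that Corollary \ref{thmHMM}\eqref{thmHMM2} cannot be invoked for the processes $(X,X^*)$, $(X,\underline X^*)$, $(X,\overline X^*)$ themselves, because its marginal hypothesis is a genuine stochastic order of the observation marginals, and that fails here: $\cF_n$ constrains $F_{X_n^*}$ only on $t\geq 0$, and even within the Gaussian family $N(0,n)$ and $N(0,n+\overline\sigma_n)$ are not $\leq_{st}$-comparable. Your convex-order claim $F_{N(0,n)}\leq_{cx}F_{X_n^*}\leq_{cx}F_{N(0,n+\overline\sigma_n)}$ is false for general members of $\cF_n$ (nothing is assumed about $F_{X_n^*}$ on $(-\infty,0)$, nor about its mean), and it is in any case the hypothesis of part (iii), not of part (ii). The missing step, which the paper supplies via Remark \ref{rem38}\eqref{rem38b}, is to apply the corollary to the hidden Markov models whose observations are the positive parts $\max\{X_n^*,0\}$: the SI conditions, the copula orderings, and the Markov tree structure survive the increasing map $x\mapsto\max\{x,0\}$; the positive parts are genuinely $\leq_{st}$-ordered by the definition of $\cF_n$; and the range condition of the corollary holds because every $F\in\cF_n$ satisfies $F(0)=1/2$ (forced by the two-sided constraint at $t=0$), so that $\overline{\Ran\bigl(F_{\max\{X_n^*,0\}}\bigr)}=\{0\}\cup[1/2,1]$ for every admissible model. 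Since $\max\{0,x_0,\dots,x_d\}$ depends on its arguments only through their positive parts, the dsm comparison of the transformed processes still yields \eqref{eqthedisrw1}. You gesture at this ("via $\geq_{st}$ of the positive parts") but never perform the reduction: you do not apply the corollary to the transformed processes, and you do not verify the range condition for the observation coordinates, so the application of the corollary is not justified in the form you give it.
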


\begin{remark}
    As an extension of the setting in \eqref{eqGausssobs}, Corollary \ref{thmHMM} is also applicable when the observation process is modeled as \(X_n^* = f_n(X_n,\varepsilon_n)\) for some componentwise increasing function \(f\colon \R^2\to \R.\) Then, \(X_n^*\) is still SI in \(X_n\,;\) we refer to Examples \ref{exClaycem} and \ref{exsClayton} for this modified setting.
    Moreover, it is also possible to model the hidden process under ambiguity. For instance, the parameter \(\rho_{n,n+1}\) in \eqref{eqnoisyrwcor} may be assumed to belong to a subinterval of \([0,1].\) Recall that the lower orthant order, the SI property, and copulas are invariant under increasing transformations; see \cite[Theorem 6.G.3]{Shaked-Shanthikumar-2007} and \cite[Proposition 2.12 and Theorem 3.3]{Cai-2012}. Consequently, the setting in Theorem \ref{thedisrw} also applies to transformed processes such as \((\exp(X_n^*))_{n\in \N_0}.\) 
\end{remark}

In Example \ref{exGaussobs} above, the perturbations are modeled as additive errors independent of \(X_n.\) In the sequel, we investigate the behavior of the observation process when the errors depend on the values of the hidden process \(X_n.\)
As an illustrative example, we use Clayton and survival Clayton copulas to specify the dependencies of \((X_n,X_n^*)\), where
\begin{align}\label{eq: Clayton copula}
    C_\theta^{\Cl}(u,v) &:= \max\{(u^{-\theta}+v^{-\theta}-1)^{-1/\theta},0\} \quad\text{ and }\\
    \label{eq: sClayton copula}\quad C_\theta^{\SCl}(u,v) &:= 1-u-v+C_\theta^{\Cl}(u,v), \quad (u,v)\in [0,1]^2
\end{align}
denote the Clayton and survival Clayton copula with parameter \(\theta\in (0,\infty).\) For \(\theta=0\) the Clayton and survival Clayton copula are defined as \(C_\theta^{\Cl}(u,v) = C_\theta^{\Cl}(u,v) = uv\), representing independence. For \(\theta\to\infty,\) they model comonotonicity.
While Clayton copulas exhibit lower tail dependencies, their associated survival copulas exhibit upper tail dependencies; see e.g. \cite{Ansari-Rockel-2024} and Figure \ref{fig:copula_plot}.
\begin{figure}[t]
	\centering

	\begin{minipage}{0.3\textwidth}
		\centering
		\includegraphics[width=\linewidth]{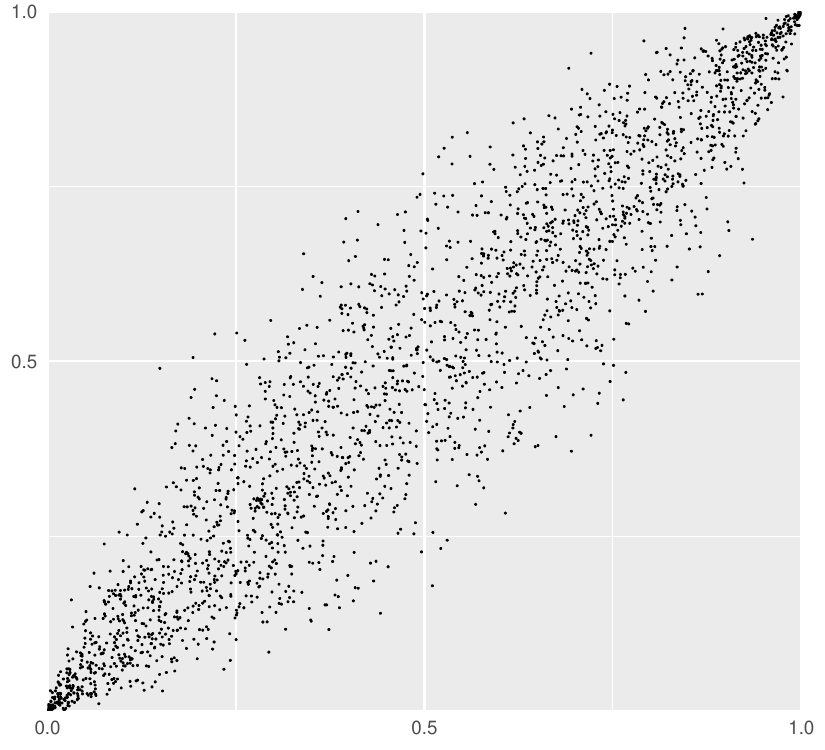}
	\end{minipage}
	\hspace{0.02\textwidth}
	\begin{minipage}{0.3\textwidth}
		\centering
		\includegraphics[width=\linewidth]{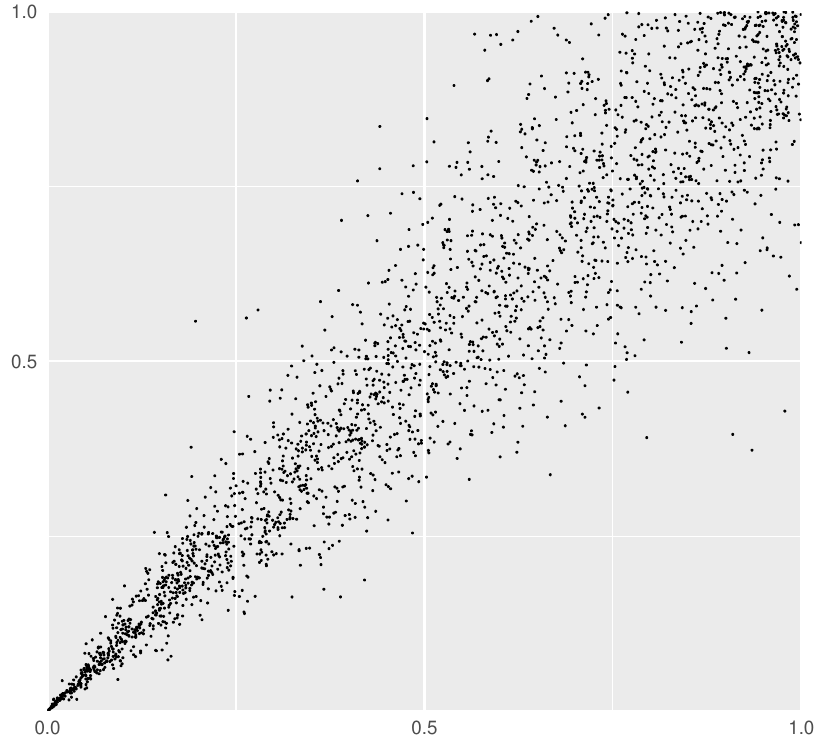}
	\end{minipage}
    \hspace{0.02\textwidth}
	\begin{minipage}{0.3\textwidth}
		\centering
		\includegraphics[width=\linewidth]{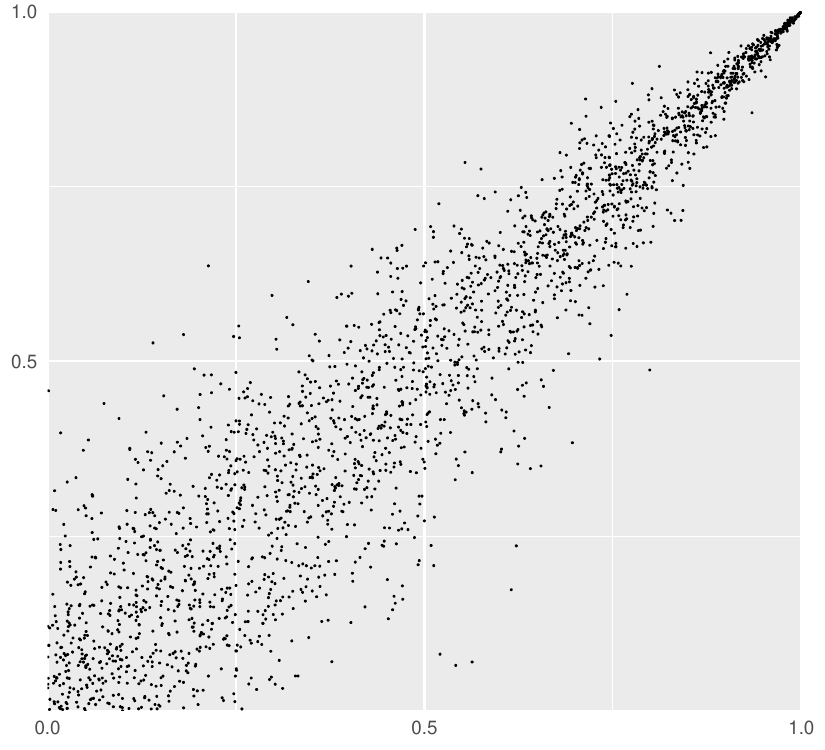}
	\end{minipage}
 \caption{Samples of a Gaussian copula (left), Clayton copula (mid), and survival Clayton copula (right), each having Kendall's tau value \(\tau = 0.795\) which corresponds to the parameter $\rho=\sqrt{9/10}$ for the Gaussian copula and $\theta=7.764$ for the Clayton and survival Clayton copula. The plots indicate that the Clayton copulas exhibit lower tail-dependencies, while the survival Clayton copulas have upper tail-dependencies.}
		\label{fig:copula_plot}
\end{figure}

\begin{example}[Clayton copula error model]\label{exClaycem}
  To analyze the influence of lower tail dependencies on \(\max_{k\leq n}\{X_k^*\}\), we now consider Clayton copulas as
dependence specifications of the observations process. The hidden process \(X\) is modeled as before by the random walk in \eqref{eq: random walk}. For better comparability with the setting in Example \ref{exGaussobs}, we assume that \(X^*\) is specified by the same marginals as in the Gaussian setting (see \eqref{eqGausssobs1}). However, \(X_n^*\) and \(X_n\) are now coupled by a Clayton copula, i.e.,
\begin{align}
    \label{eqClayton1} X_n^* &\sim N(0,n+\sigma_n), \quad \sigma_n\in [0,\overline{\sigma}_n], \\
        C_{X_n,X_n^*} &= C_{\theta_n^*}^{\Cl}, \quad \text{with}\quad \theta^* \in [\underline{\theta}_n,\infty).
\end{align}
For modeling a similar strength of positive dependence, we choose the lower bound for the parameter interval as \(\underline{\theta}_n := \theta(\underline{\rho}_n)\), where
\begin{align}\label{eq: Beta transform }
        \theta(\rho):=\frac{2\arcsin(\rho)}{\pi-2\arcsin(\rho)}
    \end{align}
is the parameter \(\theta\) such that the Clayton copula \(C_\theta^{\Cl}\) has the same Kendall's tau value as the Gaussian copula with parameter \(\rho\,;\) see e.g. \cite[Table 6]{Ansari-Rockel-2024}. Recall that \(\underline{\rho}_n\) is specified in Example \ref{exGaussobs}.
It is well known that any Clayton copula \(C_{\theta}^{\Cl}\) is CI for every \(\theta\geq 0\), and that the Clayton copula family is \(\leq_{lo}\)-increasing in its parameter; see e.g. \cite[Table 3]{Ansari-Rockel-2024}. 
Thus, specifying \(\underline{X}^* = (\underline{X}_n^*)_{n\in \N}\) and \(\overline{X}^* = (\overline{X}_n^*)_{n\in \N}\) by 
\begin{align}
    \underline{X}_n^* &= X_n,\\
    F_{\overline{X}_n^*} &= F_{N(0,n+\overline{\sigma}_n)} \quad \text{and} \quad C_{X_n,\overline{X}_n^*} = C_{\underline{\theta}_n}^{\Cl},
\end{align}
we can derive similar comparison results as in Theorem \ref{thedisrw}, where now the Gaussian copula in \eqref{remGaudus2} is replaced by the Clayton copula \(C_{\underline{\theta}_n}^{\Cl}.\) The uncertainty bands for the distribution function of \(\max_{k\leq n}\{X_k^*\}\) are visualized in Figure \ref{fig:HM_plot}.
\end{example}

\begin{figure}[t]
	\centering
	\begin{minipage}{0.9\textwidth}
		\centering
		\includegraphics[width=\linewidth]{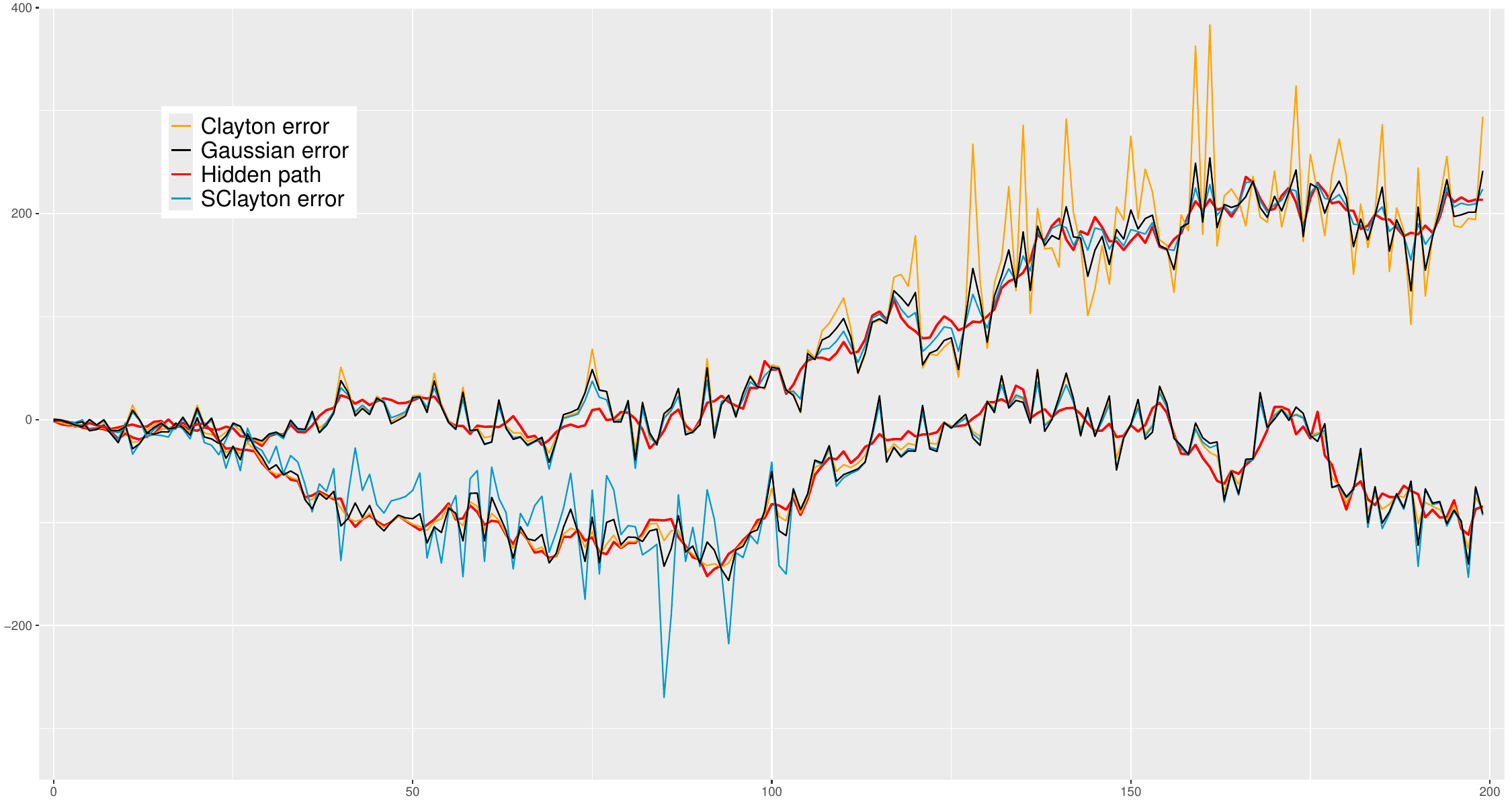}
	\end{minipage}
 \caption{Samples from the hidden process \(X=(X_n)_{n\in \N_0}\) and the observations process \(\underline{X}^* = (\underline{X}^*_n)_{n\in \N_0}\) for \(n\in [0,200]\) in various settings: The red graphs represent two sample paths of the hidden process in \eqref{eq: random walk} (which coincides with the observation process \(\overline{X}^*\) in the absence of errors). The black/orange/blue graphs model the observation \(\underline{X}^*\) in the Gaussian/Clayton/survival Clayton setting under maximal uncertainty, as specified in Example \ref{exGaussobs}/\ref{exClaycem}/\ref{exsClayton}. For this plot, we set \(\overline{\sigma}_n = 3\) for all \(n.\)}
		\label{fig:paths}
\end{figure}

As illustrated in Figure \ref{fig:paths}, \(X_n^*\) exhibits a strong dependence on \(X_n\) for small values of \(X_n.\) Conversely, when \(X_n\) attains large values, \(X_n^*\) becomes less dependent on \(X_n\), resulting in significant fluctuations. Consequently, in the Clayton copula setting, the uncertainty interval for \(F_{\max_{k\leq n}\{X_k^*\}}(t)\) is wider for \(t \gg 0\) (see Figure \ref{fig:HM_plot}), leading to greater uncertainty in the first-passage times.

In the following example, we analyze the influence of upper tail dependencies between the hidden process and the observations. For simplicity, these dependencies are modeled using survival Clayton copulas.

\begin{example}[Survival Clayton copula error model]\label{exsClayton}
    To analyze the influence of upper tail dependencies, we consider a setting similar to Example \ref{exClaycem}, replacing the Clayton copula with the survival Clayton copula defined in \eqref{eq: sClayton copula}. By straightforward symmetry arguments, survival Clayton copulas are also CI and \(\leq_{lo}\)-increasing in their parameter. Hence, the setting from Example \ref{exClaycem} similarly applies to survival Clayton copulas, allowing us to derive uncertainty bands for \(F_{\max_{k\leq n}\{X_k^*\}}\) in an analogous way.
\end{example}

Figure \ref{fig:paths} also illustrates the behavior of the perturbed random walk in the survival Clayton copula setting. Large values of \(X_n\) are strongly coupled with \(X_n^*,\) whereas the observation process exhibits greater fluctuations for small values of \(X_n.\) Consequently, there is less uncertainty in the distribution of \(\max_{k\leq n}\{X_k^*\}\) (see Figure \ref{fig:HM_plot}) and, therefore, also less ambiguity in the first-passage time process.

\section{Counterexamples}\label{seccex}

In this section, we provide various examples that demonstrate the generality of our results.

The following example shows that the SI assumptions on \(X\) and \(Y\) in 'opposite order' in Proposition \ref{propMKprocess} cannot be replaced by SI in the 'same order', i.e., there is no version of Proposition \ref{propMKprocess} under the assumptions that \(X_{i+1}\uparrow_{st} X_i\) and \(Y_{i+1}\uparrow_{st} Y_i\,;\) see also \cite[Example 4.4]{fang1994decrease}. More specifically, we show that even with two additional SI assumptions, as illustrated in Figure \ref{fig:counterexample markov}\,a), no supermodular comparison result can be established.

\begin{example}[Condition \eqref{propMKprocess2} in Proposition \ref{propMKprocess} cannot be replaced by \(Y_{i+1}\uparrow_{st} Y_i\)]\label{exp:a}~\\
Let $T=(N,E)$, $N=\{0,1,2\}$, $E=\{(0,1),(1,2)\}$, be a tree on \(3\) nodes.
    Consider the doubly stochastic matrices $a^{01},a^{12},b^{01},b^{12}\in\mathbb R^{3\times 3}$ given by
    \begin{align*}
    a^{01} = b^{01} =
    \frac{1}{30}
    \begin{pmatrix}
    4 & 4 & 2 \\
    3 & 4 & 3 \\
    3 & 2 & 5 \\
    \end{pmatrix},\quad
    a^{12} = \frac{1}{30}
    \begin{pmatrix}
    4 & 4 & 2 \\
    4 & 3 & 3 \\
    2 & 3 & 5 \\
    \end{pmatrix},\quad
    b^{12} =\frac{1}{30}
    \begin{pmatrix}
    5 & 4 & 1 \\
    3 & 3 & 4 \\
    2 & 3 & 5 \\
    \end{pmatrix}.
    \end{align*}
    Let $X=(X_1,X_2,X_3)$ and $Y=(Y_1,Y_2,Y_3)$ be random vectors which follow a Markov tree distribution with respect to the tree \(T\)
    specified by the bivariate distributions given by 
    \begin{align*}
        P[X_i=k, X_{i+1}=l]=a^{ii+1}_{kl}\text{ and } P[Y_i=k, Y_{i+1}=l]=b^{ii+1}_{kl}
    \end{align*}
    for \(k,l\in \{0,1,2\}\) and \(i\in \{0,1\}.\)
    All marginal distributions of the vectors $X$ and $Y$ are uniform on \(\{0,1,2\},\) i.e., \(X_i\eqd Y_i\sim U(\{0,1,2\})\) for \(i\in \{0,1,2\}.\)
    Further, for \(i\in \{0,1\},\) it holds that \((X_i,X_{i+1})\leq_{lo} (Y_i,Y_{i+1})\) and thus, due to identical marginals, \((X_i,X_{i+1})\leq_{sm} (Y_i,Y_{i+1})\,;\) see \eqref{eqbivord}.
    Moreover, 
    \begin{align*}
        X_{1}&\uparrow_{st} X_0,\phantom{Y_0,Y_2} X_{2}\uparrow_{st}X_1,\phantom{Y_0,Y_2} X_{1}\uparrow_{st}X_2, \\
        Y_{1}&\uparrow_{st} Y_0,\phantom{X_0,X_2} Y_{2}\uparrow_{st} Y_1,\phantom{X_0,X_2} Y_{1}\uparrow_{st}Y_2,
    \end{align*}
    i.e., \(X\) and \(Y\) are \textnormal{CIS} (see Lemma \ref{lem:CIStrees}), and the bivariate subvectors $(X_1,X_2)$ and $(Y_1,Y_2)$ are CI. However, \(Y\) violates 
    condition \eqref{propMKprocess2} of Proposition \ref{propMKprocess} as well as condition \eqref{thm12} of Theorem \ref{thm: supermodular order of tree specifications generalized} since \(Y_0\not\uparrow_{st} Y_1.\) 
    For the lower orthant set $\Theta\coloneqq (-\infty,1]^3$, we obtain 
    \begin{align*}
       P^X(\Theta) = 3\!\!\sum_{i,j,k=1}^2  a^{01}_{ij}a^{12}_{jk}=\frac{1.12}{3}>   \frac{1.11}{3}=3\!\!\sum_{i,j,k=1}^2 b^{01}_{ij}b^{12}_{jk}=P^Y(\Theta).
    \end{align*}
    This shows that $X\nleq_{lo}Y$ and thus, due to \eqref{smpdo}, $X\nleq_{sm}Y$. Hence, condition \eqref{propMKprocess2} of Proposition \ref{propMKprocess} cannot be replaced by \(Y_{i+1}\uparrow_{st} Y_i.\) 
    Similarly, this example highlights the importance of the assumption $Y_i\uparrow_{st}Y_j$ for all $(i,j)\in E$ with $j\notin  L$ in Theorem \ref{thm: supermodular order of tree specifications generalized}.
\end{example}

\begin{figure}[t]
	\begin{tikzpicture}[scale=.75, transform shape]
		\tikzstyle{every circle node} = []

		\begin{scope}[shift={(-1.5,0)}]
			\node[circle] (a) at (0, 0) {\(X_0\)};
			\node[circle] (b) at +(0: 1.7) {\(X_1\)};
			\node[circle] (c) at +(0: 3.4) {\(X_2\)};
			\node[circle] (r) at +(-1,0) {a)};
            \draw[->] (a) -- (b);
		      \draw [->] (b) -- (c);
		\end{scope}
        
		\begin{scope}[shift={(-1.5,-0.8)}]
			\node[circle] (a) at (0, 0) {\(Y_0\)};
			\node[circle] (b) at +(0: 1.7) {\(Y_1\)};
			\node[circle] (c) at +(0: 3.4) {\(Y_2\)};
			\draw [->] (b) -- (a);
			\draw [->] (c) -- (b);
		\end{scope}
  
		\begin{scope}[shift={(4,0)}]
			\node[circle] (a) at (0, 0) {\(X_1\)};
			\node[circle] (b) at +(0: 1.7) {\(X_0\)};
			\node[circle] (c) at +(0: 3.4) {\(X_2\)};
			\node[circle] (r) at +(-1,0) {b)};
            \draw[<-] (a) -- (b);
		      \draw [->] (b) -- (c);
		\end{scope}
        
		\begin{scope}[shift={(4,-0.8)}]
			\node[circle] (a) at (0, 0) {\(Y_1\)};
			\node[circle] (b) at +(0: 1.7) {\(Y_0\)};
			\node[circle] (c) at +(0: 3.4) {\(Y_2\)};
			\draw [<-] (a) -- (b);
			\draw [->] (b) -- (c);
		\end{scope}

		\begin{scope}[shift={(9.5,0)}]
			\node[circle] (a) at (0, 0) {\(X_0\)};
			\node[circle] (b) at +(0: 1.7) {\(X_1\)};
			\node[circle] (c) at +(0: 3.4) {\(X_2\)};
			\node[circle] (r) at +(-1,0) {c)};
            \draw[-] (a) -- (b);
		      \draw [->] (b) -- (c);
		\end{scope}
        
		\begin{scope}[shift={(9.5,-0.8)}]
			\node[circle] (a) at (0, 0) {\(Y_0\)};
			\node[circle] (b) at +(0: 1.7) {\(Y_1\)};
			\node[circle] (c) at +(0: 3.4) {\(Y_2\)};
			\draw [->] (b) -- (a);
			\draw [-] (c) -- (b);
		\end{scope}
	\end{tikzpicture}
	\caption{The graphs illustrate sufficient SI conditions which imply \((X_0,X_1,X_2)\leq_{sm} (Y_0,Y_1,Y_2)\) whenever \((X_i,X_j)\leq_{sm} (Y_i,Y_j)\) for \((i,j)\in E,\) where 
    an arrow \(U \to V\) indicates \(V\uparrow_{st} U.\) The graph in a) corresponds to the setting in Proposition \ref{propMKprocess}, while the graph in b) corresponds to the setting in Lemma \ref{lemfacmod}. The graph in c) generalises the two previous cases and corresponds to the setting in Theorem \ref{thm: supermodular order of tree specifications generalized} noting that there is no positive dependence condition on \((X_0,X_1)\) and \((Y_1,Y_2).\) Similar results are obtained when replacing \(\leq_{sm}\) with \(\geq_{sm}.\)
 }
	\label{fig:counterexample markov2}

	\begin{tikzpicture}[scale=.8, transform shape]
		\tikzstyle{every circle node} = []

		\begin{scope}[shift={(-1.5,0)}]
			\node[circle] (a) at (0, 0) {\(X_0\)};
			\node[circle] (b) at +(0: 1.7) {\(X_1\)};
			\node[circle] (c) at +(0: 3.4) {\(X_2\)};
			\node[circle] (r) at +(-1,0) {a)};
            \draw[->] (a) -- (b);
		      \draw [<->] (b) -- (c);
		\end{scope}
        
		\begin{scope}[shift={(-1.5,-0.8)}]
			\node[circle] (a) at (0, 0) {\(Y_0\)};
			\node[circle] (b) at +(0: 1.7) {\(Y_1\)};
			\node[circle] (c) at +(0: 3.4) {\(Y_2\)};
			\draw [->] (a) -- (b);
			\draw [<->] (b) -- (c);
		\end{scope}
  
		\begin{scope}[shift={(-1.5,-2)}]
			\node[circle] (a) at (0, 0) {\(X_1\)};
			\node[circle] (b) at +(0: 1.7) {\(X_0\)};
			\node[circle] (c) at +(0: 3.4) {\(X_2\)};
			\node[circle] (r) at +(-1,0) {b)};
            \draw[<->] (a) -- (b);
		      \draw [<->] (b) -- (c);
		\end{scope}
        
		\begin{scope}[shift={(-1.5,-2.8)}]
			\node[circle] (a) at (0, 0) {\(Y_1\)};
			\node[circle] (b) at +(0: 1.7) {\(Y_0\)};
			\node[circle] (c) at +(0: 3.4) {\(Y_2\)};
			\draw [->] (a) -- (b);
			\draw [<-] (b) -- (c);
		\end{scope}

		\begin{scope}[shift={(4.5,0)}]
			\node[circle] (a) at (0, 0) {\(X_0\)};
			\node[circle] (b) at +(0: 1.7) {\(X_1\)};
			\node[circle] (c) at +(0: 3.4) {\(\dots\)};
            \node[circle] (d) at +(0: 5.1) {\(X_{d-1}\)};
            \node[circle] (e) at +(0: 6.8) {\(X_d\)};
			\node[circle] (r) at +(-1,0) {c)};
            \draw[<->] (a) -- (b);
		      \draw [<->] (b) -- (c);
            \draw [<->] (c) -- (d);
            \draw[<->] (d) -- (e);
		\end{scope}
        
		\begin{scope}[shift={(4.5,-0.8)}]
			\node[circle] (a) at (0, 0) {\(Y_0\)};
			\node[circle] (b) at +(0: 1.7) {\(Y_1\)};
			\node[circle] (c) at +(0: 3.4) {\(\dots\)};
            \node[circle] (d) at +(0: 5.1) {\(Y_{d-1}\)};
            \node[circle] (e) at +(0: 6.8) {\(Y_d\)};
			\draw [->] (a) -- (b);
			\draw [<->] (b) -- (c);
            \draw [<->] (c) -- (d);
            \draw [<-] (d) -- (e);
		\end{scope}
  
		\begin{scope}[shift={(4.5,-2)}]
			\node[circle] (a) at (0, 0) {\(X_0\)};
			\node[circle] (b) at +(0: 1.7) {\(X_1\)};
			\node[circle] (c) at +(0: 3.4) {\(X_2\)};
			\node[circle] (d) at +(0: 5.1) {\(X_3\)};
			\node[circle] (r) at +(-1,0) {d)};
			\draw [-] (a) -- (b);
			\draw [-] (b) -- (c);
			\draw [-] (c) -- (d);
		\end{scope}

		\begin{scope}[shift={(4.5,-2.8)}]
			\node[circle] (a) at (0, 0) {\(Y_0\)};
			\node[circle] (b) at +(0: 1.7) {\(Y_1\)};
			\node[circle] (c) at +(0: 3.4) {\(Y_2\)};
			\node[circle] (d) at +(0: 5.1) {\(Y_3\)};
			\draw [<->] (a) -- (b) node[pos=0.5,above]{\tiny{\(\operatorname{MTP}_2\)}};
			\draw [<->] (b) -- (c) node[pos=0.5,above]{\tiny{\(\operatorname{MTP}_2\)}};
			\draw [<->] (c) -- (d) node[pos=0.5,above]{\tiny{\(\operatorname{MTP}_2\)}};
		\end{scope}

	\end{tikzpicture}
	\caption{The graphs illustrate the non-sufficient SI conditions in a) Example \ref{exp:a}, b) Example \ref{exp:b}, c) Example \ref{exp:c} and d) Example \ref{exp:d},  where 
    an arrow \(U \to V\) indicates \(V\uparrow_{st} U\) and where \(U \xleftrightarrow{\MTP} V\) indicates that \((U,V)\) is \(\MTP\) which implies, in particular, that \(U\uparrow_{st} V\) and \(V\uparrow_{st} U.\)
    As shown in the respective examples, none of the SI conditions on \(X=(X_n)_{n\in N}\) and \(Y=(Y_n)_{n\in N}\) are sufficient for \(X\leq_{sm} Y\) whenever $(X_i,X_j)\leq_{sm}(Y_i,Y_j)$ for all $(i,j)\in E.$ 
}
	\label{fig:counterexample markov}
\end{figure}

The following example shows that there is no version of Lemma \ref{lemfacmod} when assumptions \eqref{lemfacmod1} and \eqref{lemfacmod2} are replaced by  \((X_0,X_i)\) is \CI and \(Y_0\uparrow_{st} Y_i\) for \(i\in \{1,\ldots,d\}\,\).
These SI conditions correspond to the graph in Figure \ref{fig:counterexample markov}\,b).

\begin{example}[Conditions \eqref{lemfacmod1}-\eqref{lemfacmod2} in Lemma \ref{lemfacmod} cannot be replaced by \((X_0,X_i)\) \CI and \(Y_0\uparrow_{st} Y_i\)]\label{exp:b}  
Let \(T=(N,E),\) \(N=\{0,1,2\},\) \(E=\{(0,1),(0,2)\},\) be a tree on three nodes.
    Consider the doubly stochastic matrices $a^{01},a^{12},b^{01},b^{12}\in\mathbb R^{4\times 4}$ defined by
    \begin{align}\label{eq: matrices from example b}
    \begin{split}
       a^{01} &= \frac{1}{40}\begin{pmatrix}
3 & 3 & 3 & 1 \\
3 & 3 & 3 & 1 \\
3 & 3 & 3 & 1 \\
1 & 1 & 1 & 7 \\
\end{pmatrix},\quad
b^{01} = \frac{1}{40}\begin{pmatrix}
4 & 3 & 2 & 1 \\
5 & 4 & 1 & 0 \\
1 & 2 & 5 & 2 \\
0 & 1 & 2 & 7 \\
\end{pmatrix},\\
a^{02} &= \frac{1}{40}\begin{pmatrix}
4 & 3 & 2 & 1 \\
3 & 3 & 2 & 2 \\
2 & 3 & 3 & 2 \\
1 & 1 & 3 & 5 \\
\end{pmatrix},\quad
b^{02} = \frac{1}{40}\begin{pmatrix}
6 & 2 & 2 & 0 \\
3 & 3 & 1 & 3 \\
1 & 3 & 4 & 2 \\
0 & 2 & 3 & 5 \\
\end{pmatrix}.
    \end{split}
    \end{align}
    Let $X=(X_0,X_1,X_2)$ and $Y=(Y_0,Y_1,Y_2)$ be random vectors which follow a Markov tree distribution with respect to the tree \(T\)
    specified by the bivariate distributions given by
    \begin{align*}
        P[X_0=k, X_{i}=l]=a^{0i}_{kl}\text{ and }P[Y_{0}=k, Y_{i}=l]=b^{0i}_{kl} 
    \end{align*}
    for \(k,l\in \{0,1,2,3\}\) and \(i\in \{0,1\}.\)
    All marginal distributions of the vectors $X$ and $Y$ are uniform on \(\{0,1,2,3\},\) i.e., \(X_i\eqd Y_i\sim U(\{0,1,2,3\})\) for \(i\in \{0,1,2\}.\)
    Further, for \(i\in \{1,2\},\) it holds that \((X_0,X_{i})\leq_{lo} (Y_0,Y_{i})\)  and thus, due to identical marginals, \((X_0,X_{i})\leq_{sm} (Y_0,Y_{i})\,;\) see \eqref{eqbivord}.
Moreover,
    \begin{align*}
        X_{0}&\uparrow_{st} X_1, \phantom{Y_1Y_0} X_{0}\uparrow_{st} X_2, \phantom{Y_1Y_0} X_{1}\uparrow_{st}X_0, \phantom{Y_1Y_0}  X_{2}\uparrow_{st} X_0,\\
        Y_{0}&\uparrow_{st} Y_1, \phantom{X_1X_0} Y_{0}\uparrow_{st} Y_2,
    \end{align*}
    i.e., for \(i\in \{1,2\},\) the common factor variable \(Y_0\) is stochastically increasing \(Y_i\) and $(X_0,X_1)$ as well as $(X_0,X_2)$ are CI.
    For the lower orthant set $\Theta\coloneqq (-\infty,2]^3$, it holds that
    \begin{align}\label{eq: example b calculation}
       P^X(\Theta) = 4\!\!\sum_{i,j,k=1}^3  a^{01}_{ji}a^{12}_{jk} = \frac{2.25}{4}> \frac{2.24}{4} = 4\!\!\sum_{i,j,k=1}^3 b^{01}_{ji}b^{12}_{jk}= P^Y(\Theta).
    \end{align}
    This shows that $X\nleq_{lo}Y$ and thus, due to \eqref{smpdo}, $X\nleq_{sm}Y$. Hence, condition \eqref{lemfacmod2} of Lemma \ref{lemfacmod} cannot be replaced by \(Y_0\uparrow_{st} Y_i.\) Similarly, this example highlights the importance of the assumption $Y_j\uparrow_{st}Y_i$ for all $(i,j)\in E$ with $j\notin P$ in Theorem \ref{thm: supermodular order of tree specifications generalized}.
\end{example}

The SI conditions in the following example correspond to the graph in Figure \ref{fig:counterexample markov}\,c). This example is a comonotonic extension of Example \ref{exp:b} and is also used in the proof of Proposition \ref{propSIass} on the generality of the SI assumptions in Theorem \ref{thm: supermodular order of tree specifications generalized}.  


\begin{example}[A comonotonic extension of Example \ref{exp:b}]\label{exp:c} ~\\ Let $(X_0,X_1,X_2)$ and $(Y_0,Y_1,Y_2)$ be the random vectors in Example \ref{exp:b}. Define the (\(d+1\))-dimensional random vectors $X'$ and $Y'$ by
\begin{align*}
    X'&=(X'_0,\dots,X'_d)\coloneqq(X_1,X_0,\dots,X_0,X_2) \quad \text{ and }\\
    Y'&=(Y'_0,\dots,Y'_d)\coloneqq(Y_1,Y_0,\dots,Y_0,Y_2).
\end{align*}
Then $X'$ and $Y'$ follow a Markov tree distribution with respect to the chain $T=(N,E)$, where $N=\{0,\dots,d\}$ and $E=\{(0,1),\dots,(d-1,d)\}$.
We have that 
    \begin{align*}
        X'_{i+1}&\uparrow_{st} X'_i, &X'_{i}&\uparrow_{st} X'_{i+1}, &&\text{for all } i\in \{0,\dots,d-1\},\\
        Y'_{i+1}&\uparrow_{st} Y'_i, &Y'_{j}&\uparrow_{st} Y'_{j+1}, &&\text{for all } i\in\{0,\dots,d-2\}\text{ and }j\in \{1,\dots,d-1\}.
    \end{align*}
Moreover, it holds that $(X'_i,X'_{i+1})\leq_{lo}(Y'_i,Y'_{i+1})$ for all $i=0,\dots,d-1$. 
Using \eqref{eq: example b calculation} we obtain for the lower orthant set $\Theta\coloneqq (-\infty,2]^{d+1}$ that $P^{X'}(\Theta)>P^{Y'}(\Theta).$ This shows that $X'\nleq_{lo}Y'$ and thus $X'\nleq_{sm}Y'$. This example highlights the importance of the assumption $Y_j\uparrow_{st}Y_i$ for all $(i,j)\in E$ with $j\notin P$ in Theorem \ref{thm: supermodular order of tree specifications generalized} by extending the usage of Example \ref{exp:b} to more complex cases; see also Proposition \ref{prop: justification of the SI assumptions}. 
\end{example}

The following example shows that Theorem \ref{thm: maintwo} fails to hold if the assumption on equal ranges of the marginal distribution functions is not imposed. Additionally, the example highlights that the dependence structure of a random vector determined by a Markov realization of a bivariate tree specification is determined not only by the set of bivariate copulas and the conditional independence assumptions but also by the choice of the marginal distributions.

\begin{example}[General marginals are not sufficient for Theorem \ref{thm: maintwo}]\label{exp: marginal distribution determines copula}~\\
    Let $T$ be a tree with nodes $N=\{0,1,2\}$ and edges $E=\{(0,1),(0,2)\}$. 
    Denote by \(F_{U(0,1)}\) the distribution function of the uniform distribution on \((0,1).\) Consider the marginal specifications \(F_i\) and \(G_i,\) \(i\in \{0,1,2\},\) given by
    \begin{align*}
        F_0 = F_1 = F_2 = G_1 = G_2 = F_{U(0,1)}\quad \text{and} \quad G_0=\mathds{1}_{[0,\infty)}.
    \end{align*}
    Then \(F_0,F_1,F_2,G_1\) and \(G_2\) are continuous distribution functions and \(G_0\) is the distribution function of the Dirac distribution in \(0,\) which is not continuous. 
    Denote by \(M\) the bivariate comonotonicity copula, i.e., \(M(u,v)=\min\{u,v\}\) for \((u,v)\in [0,1],\) and consider the bivariate specifications \(B_e := C_e := M\) for \(e\in E.\)
    Let \(U\) and \(V\) be independent and uniformly on \((0,1)\) distributed random variables.
    Then, for \(F=(F_0,F_1,F_2), G=(G_0,G_1,G_2),\) \(B=(B_e)_{e\in E},\) and \(C=(C_e)_{e\in E},\) the random vectors \(X=(X_0,X_1,X_2):=(U,U,U)\) and \(Y=(Y_0,Y_1,Y_2):=(0,U,V)\) are Markov realizations of \((F,T,B)\) and \((G,T,C),\) respectively, i.e.,
    \begin{align*}
        X\sim \cM(F,T,B) \quad \text{and} \quad Y \sim \cM(G,T,B).
    \end{align*}
    It holds that $F_i\geq_{st} G_i$ for all $i\in \{0,1,2\}$ and \(B_e = C_e\) (which trivially implies \(B_e\leq_{lo} C_e\)) for \(e\in E.\) Further, the bivariate copulas \(B_e\) and \(C_e,\) \(e\in E,\) are CI. Hence, all assumptions of Theorem \ref{thm: maintwo}\eqref{thm: maintwo2} except of continuity of the marginal distributions are satisfied.
    However, it holds for all \((u,v)\in (0,1)\) that 
    \[F_{X_1,X_2}(u,v) = \min\{u,v\} > uv = F_{Y_1,Y_2}(u,v),\] which implies \((X_1,X_2)\nleq_{dsm} (Y_1,Y_2).\) 
    Since the decreasing supermodular order is closed with respect to marginalization, 
    it follows that $X\nleq_{dsm} Y$.\\
    Replacing in the above setting \(G_0\) by \(G_0=\1_{[1,\infty)}\) and considering \(Y=(1,U,V),\) it follows similarly that \((X_1,X_2) \nleq_{ism} (Y_1,Y_2)\) and thus \(X\nleq_{ism} Y.\) Hence, we conclude that the continuity assumption for the marginal specifications in Theorem \ref{thm: maintwo}\eqref{thm: maintwo1} cannot be omitted.

    Note that in this example, \(X_1\) and \(X_2\) are comonotonic while \(Y_1\) and \(Y_2\) are independent. Since the bivariate tree specifications \((F,T,B)\) and \((G,T,C)\) only differ in the first component of the marginal specifications (i.e., \(F_0\ne G_0,\) \(F_1=G_1,\) \(F_2=G_2,\) and \(B=C\)), we conclude that the marginal distributions can also influence the dependence structure of Markov tree distributions. In other words, conditional independence is not solely a copula-based property.  
\end{example}

The following example justifies the \(\MTP\) assumption in Theorem \ref{themaindcx} since, surprisingly, bivariate \CI specifications generally do not result in a Markov realization that is CI.

\begin{example}[A non-\CI Markov tree distribution with bivariate \CI marginals]\label{exp:counter for ci}~\\
    For \(i\in \{1,2\},\) consider the doubly stochastic matrices $a^i = (a^i_{k\ell})_{1\leq k,\ell\leq 3}\in\mathbb R^{3\times 3}$ given by 
    \begin{align*}
    a^1 =
    \frac{1}{30}
    \begin{pmatrix}
    8 & 2 & 0 \\
    2 & 5 & 3 \\
    0 & 3 & 7 \\
    \end{pmatrix},\quad
    a^2 = \frac{1}{30}
    \begin{pmatrix}
    4 & 4 & 2 \\
    4 & 3 & 3 \\
    2 & 3 & 5 \\
    \end{pmatrix}.
    \end{align*}
    Let $X=(X_0,X_1,X_2)$ be a random vector that follows a Markov tree distribution with respect to the tree $T=(N,E)$, $N=\{0,1,2\}$, $E=\{(0,1),(1,2)\}$, specified by the univariate and bivariate distributions through 
    \begin{align}
       \label{exnonCIunivmarg} P[X_i=k] &= 1/3 \quad \text{for } i,k\in \{0,1,2\},\\
       \label{exnonCIbivmarg} P[X_i=k, X_{i+1}=\ell] &=a^{i+1}_{k\ell} \quad\text{for } i\in \{0,1\} \text{ and } k,\ell\in \{0,1,2\}.
    \end{align}
    Note that the univariate distributions in \eqref{exnonCIunivmarg} are the marginals of the bivariate distributions in \eqref{exnonCIbivmarg}.
    From the definition of the matrices \(a^1\) and \(a^2\) it can be seen that the subvectors $(X_0,X_1)$ and $(X_1,X_2)$ are \CI. It even follows that $(X_0,X_2)$ is \CI because, as a consequence of Lemma \ref{lem:CIStrees},  \(X_0\uparrow_{st}(X_1,X_2)\) and \(X_2\uparrow_{st} (X_1,X_0).\) However, the vector $(X_0,X_1,X_2)$ is not $\CI$ because
    \begin{align*}
        P[X_1\geq 1\vert X_0=0,X_2=k]=\begin{cases}
            1-16/20, & \text{ if } k=0,\\
             1-16/19, & \text{ if } k=1,\\
             1-16/22, & \text{ if } k=2,
        \end{cases}
    \end{align*}
    is not increasing in \(k.\) Hence, if the bivariate specifications are \textnormal{CI}, the implied Markov tree distribution is generally not CI.
\end{example}

The following example shows that Proposition \ref{propSch} based on the Schur order for conditional distributions cannot be extended to Markov processes and thus neither to Markov tree distributions.

\begin{example}[Schur order is not applicable to Markov processes]\label{exp:d}~\\
    Define the matrices $a,b\in\mathbb R^{3\times 3}$ by
\begin{align}
a \coloneqq  \frac{1}{30}\begin{pmatrix}
    5 & 2 & 3 \\
    3 & 7 & 0 \\
    2 & 1 & 7
\end{pmatrix}
\quad\text{and}\quad
b\coloneqq  \frac{1}{30}\begin{pmatrix}
    6 & 4 & 0 \\
    3 & 4 & 3 \\
    1 & 2 & 7
\end{pmatrix}.
\end{align}
    Let $X=(X_0,\dots,X_3)$ and $Y=(Y_0,\dots,Y_3)$ be random vectors which follow a Markov tree distribution with respect to the tree $T=(N,E)$ for $N=\{0,1,2,3\}$ and $E=\{(0,1),(1,2),(2,3)\}$ with bivariate distributions specified by 
        \begin{align*}
        P[X_i \in [k,k+s), X_{i+1}\in [\ell, \ell+t)] &=s\,t\,a_{k\ell},\\
        \quad P[Y_i\in [k,k+s), Y_{i+1}\in [\ell,\ell+t)] &=s\,t\,b_{k\ell},
    \end{align*}
    for all \(k,\ell\in \{0,1,2\},\) \(i\in\{0,1,2,3\}\) and $s,t\in[0,1].$
     All marginal distributions of the vectors $X$ and $Y$ are uniform on \([0,3),\) i.e., \(X_i\eqd Y_i\sim U([0,3))\) for all \(i\in \{0,1,2,3\}.\) It holds that $(Y_i,Y_{i+1})$ is $\MTP$ (thus, in particular, \(Y_i\uparrow_{st} Y_{i+1}\) and \(Y_{i+1}\uparrow_{st} Y_{i}\); see \eqref{implposdepcon}) and $(X_i,X_{i+1})\leq_{lo}(Y_i,Y_{i+1})$ for all $i\in \{0,1,2\}$. 
     Further, the bivariate distributions satisfy the Schur order
    \begin{align}\label{eqexschurord}
        (X_i|X_j)\leq_{S}(Y_i|Y_j)\quad \text{and}\quad (X_j|X_i)\leq_{S}(Y_j|Y_i)\quad\text{for all }(i,j)\in E,
    \end{align}
    i.e., for all edges \((i,j)\in E\) and for all \(z\in \R,\) both \(w\mapsto F_{X_i|X_j=q_{X_j}(w)}(z)\) has less variability than \(w \mapsto F_{Y_i|Y_j=q_{Y_j}(w)}(z)\) and \(w \mapsto F_{X_i|X_j=q_{X_j}(w)}(z)\) has less variability than \(w \mapsto F_{Y_i|Y_j=q_{Y_j}(w)}(z).\)
    Since \((Y_i,Y_{i+1})\) exhibits positive dependence (with respect to the strong notion \(\MTP\)) one might expect that \(Y\) exhibits more positive dependence than \(X\) and thus \(X\leq_{lo} Y.\) However, considering the lower orthant set $\Theta\coloneqq (-\infty,2)^4$, we obtain 
    \begin{align*}
       P^X(\Theta) =9\!\!\!\sum_{i_1,\dots,i_4=1}^2  a_{i_1i_2} a_{i_2i_3} a_{i_3i_4}&=\dfrac{1.259}{3} \\
      & >\dfrac{1.256}{3}=9\!\!\!\sum_{i_1,\dots,i_4=1}^2  b_{i_1i_2} b_{i_2i_3} b_{i_3i_4}=P^Y(\Theta),
    \end{align*}
    which shows that $X\nleq_{lo}Y$ and, consequently, $X\nleq_{sm}Y$. 
    Note that Theorem \ref{thm: supermodular order of tree specifications generalized} cannot be applied because \(X_i\not\uparrow_{st} X_j\) so that the vector \(X\) violates assumption \eqref{thm11}. 
 
   Hence, the random vector \(X=(X_0,\ldots,X_3)\) is not smaller in supermodular order than \(Y=(Y_0,\ldots,Y_3)\) even though, for all edges \((i,j)\in E,\) \((X_i,X_j)\) has less variability and is closer to independence than \((Y_i,Y_j)\) with respect to the Schur order for conditional distributions; see \eqref{eqexschurord}. Remarkably, \((X_i,X_j)\) is PSMD and \((Y_i,Y_j)\) is \(\MTP,\) so that all bivariate specifications exhibit positive dependencies. 
   To summarize, this example highlights the critical role of the SI assumptions in Theorem \ref{thm: supermodular order of tree specifications generalized} for both \((X_n)_{n\in N}\) and \((Y_n)_{n\in N}.\) In particular, it shows that the SI assumptions on \((X_n)_{n\in N}\) cannot be omitted or weakened to PSMD, even under the stronger comparison of the bivariate dependence specifications with respect to the Schur order for conditional distributions. Additionally, this example emphases the special properties of star structures, which enable a more general supermodular comparison result based on the Schur order for conditional distributions, as stated in Proposition \ref{propSch}.
\end{example}

\begin{appendices}



\section
{Discussion of the SI assumptions in Theorem \ref{thm: supermodular order of tree specifications generalized}}\label{secSI}
Our main result, Theorem \ref{thm: supermodular order of tree specifications generalized}, provides simple conditions for a supermodular comparison of Markov tree distributions. 
At first glance, the SI conditions on \(X\) and \(Y\) may appear rather unintuitive. As we demonstrate in the sequel, under the necessary ordering assumption \eqref{thm13} of Theorem \ref{thm: supermodular order of tree specifications generalized}, these conditions cannot be omitted or weakened to PSMD.

Figure \ref{fig:counterexample markov2} illustrates sufficient SI conditions in the three-dimensional case which, together with ordering assumption \eqref{thm13}, lead to the supermodular comparison of Markov tree distributions.
Whenever an SI condition in Theorem \ref{thm: supermodular order of tree specifications generalized} is skipped, 
it results in the existence of subvectors with a dependence structure that aligns with (or is even weaker than) the SI conditions depicted in the graphs in Figure \ref{fig:counterexample markov}\,a),b), or c). For these settings, we have provided examples that do not yield a lower orthant comparison result and thus do not imply a supermodular comparison result either. 
Proposition \ref{propSIass} summarizes the findings of this section regarding the necessity of the SI conditions.
In conclusion, for a Markov tree distribution on \(3\) nodes, it is precisely these two SI conditions in Figure \ref{fig:counterexample markov2}\,c) that lead to a supermodular comparison result.

The following remark summarizes Examples \ref{exp:a} -- \ref{exp:c} demonstrating that the SI conditions in Figure \ref{fig:counterexample markov} do not yield comparison results as stated in Proposition \ref{propMKprocess} and Lemma \ref{lemfacmod}.

\begin{remark}
    \begin{enumerate}[(a)]
        \item Condition \eqref{propMKprocess2} in Proposition \ref{propMKprocess} cannot be replaced by \(Y_{i+1}\uparrow_{st} Y_i:\) As shown in Example \ref{exp:a}, there exist Markov tree distributed random vectors \(X=(X_0,X_1,X_2)\) and \(Y=(Y_0,Y_1,Y_2)\) for \(T\) satisfying the SI conditions in Figure \ref{fig:counterexample markov}\,a) such that \((X_i,X_{i+1})\leq_{sm} (Y_i,Y_{i+1})\) but \(X\not\leq_{lo} Y.\)
        Hence, the SI conditions in Figure \ref{fig:counterexample markov}\,a) do not imply a supermodular comparison result.
        \item Conditions \eqref{lemfacmod1}-\eqref{lemfacmod2} in Lemma \ref{lemfacmod} cannot be replaced by \((X_0,X_i)\) being \CI and \(Y_0\uparrow_{st} Y_i:\) As shown in Example \ref{exp:b}, there exist Markov tree distributed random vectors \(X=(X_0,X_1,X_2)\) and \(Y=(Y_0,Y_1,Y_2)\) for \(T\) satisfying the SI conditions in Figure \ref{fig:counterexample markov}\,b) such that \((X_0,X_{i})\leq_{sm} (Y_0,Y_{i})\) for \(i\in \{1,2\}\) but \(X\not\leq_{lo} Y.\)
        \item As an extension of the setting in Figure \ref{fig:counterexample markov}\,b), the SI conditions in Figure \ref{fig:counterexample markov}\,c) are not sufficient to prove a supermodular ordering result as  in Proposition \ref{propMKprocess}; see Example \ref{exp:c}.
    \end{enumerate}
\end{remark}

Using Examples \ref{exp:a} -- \ref{exp:c} summarized in the preceding remark, we show in the following proposition that none of the SI conditions in assumptions \eqref{thm11} and \eqref{thm12} of Theorem \ref{thm: supermodular order of tree specifications generalized} can be dropped or weakened to PSMD. 
Since the supermodular order 
is closed under marginalization (see \cite[Theorem 9.A.9(c)]{Shaked-Shanthikumar-2007}), the proof reduces to the cases considered above.

To this end, let \(T=(N,E)\) be the tree, \(P\) the path, and $k^*$ the specific child of the root as considered in Theorem \ref{thm: supermodular order of tree specifications generalized}. 
For a node \(j^* \in N\backslash \{0\}\), which will be specified in Proposition \ref{prop: justification of the SI assumptions}, we relax the SI conditions in \eqref{thm11} and \eqref{thm12} of Theorem \ref{thm: supermodular order of tree specifications generalized} concerning only one edge: For $(i,j)\in E$ let
\begin{align}
    \tag{$i^*$} \label{thm11 mod}  \phantom{^*}\quad &  X_j\uparrow_{st}X_i\text{ if }j\notin\{k^*,j^*\}\text{, and }(X_{p_{j^*}}^\perp,X_{j^*}^\perp)\leq_{sm}(X_{p_{j^*}},X_{j^*}), \\
    \tag{$ii^*$} \label{thm12 1mod} \phantom{^*}\quad &  Y_i\uparrow_{st}Y_j\text{ if }j\notin L\cup\{j^*\}\text{, }Y_j\uparrow_{st}Y_i\text{ if }j\notin P\text{, and }(X_{p_{j^*}}^\perp,X_{j^*}^\perp)\leq_{sm}(X_{p_{j^*}},X_{j^*}),\\
    \tag{$ii^{**}$} \label{thm12 2mod} \quad &  Y_i\uparrow_{st}Y_j\text{ if }j\notin L\text{, }Y_j\uparrow_{st}Y_i\text{ if }j\notin P\cup\{j^*\}\text{, and }(X_{p_{j^*}}^\perp,X_{j^*}^\perp)\leq_{sm}(X_{p_{j^*}},X_{j^*}). 
\end{align}

    Note that $p_{j^*}\in N$ denotes the parent node of $j^*\in N$; see Definition \ref{def:childs_descendants_ancestors}. We distinguish between three cases. The first one concerns the SI conditions on \(X\) in \eqref{thm11}.
    The second case relates to the SI conditions \eqref{thm12} on $Y$, which states that  $Y_i\uparrow_{st}Y_j$ for $(i,j)\in E$ whenever $j\notin L\cup \{0\}$, where $N\backslash (L\cup \{0\})$ is a nonempty set if and only if $T$ is not a star. In the third case we consider the SI conditions of the second part of \eqref{thm12}, which asserts that $Y_j\uparrow_{st}Y_i$ for $(i,j)\in E$, whenever $j\notin P\cup \{0\}$, where $N\backslash (P\cup \{0\})$ is a nonempty set if and only if $T$ is not a chain. The following result establishes that none of the SI conditions in Theorem \ref{thm: supermodular order of tree specifications generalized} can be omitted.


\begin{proposition}[Generality of the SI assumptions of Theorem \ref{thm: supermodular order of tree specifications generalized}]\label{prop: justification of the SI assumptions}\label{propSIass}~\\
Assume that \(|N|\geq 3.\) None of the SI assumptions in Theorem \ref{thm: supermodular order of tree specifications generalized} can be replaced by the weaker concept of PSMD, 
    i.e., 
    for the tree $T=(N,E)$, the following statements hold true:
    \begin{enumerate}[(a)]
        \item\label{prop41}  For any \(j^*\in N\setminus\{0,k^*\}\), there are Markov tree distributed sequences \(X=(X_n)_{n\in N}\) and \(Y=(Y_n)_{n\in N}\) with $X\nleq_{lo}Y$ such that  $X$ and $Y$ fulfill for all $e=(i,j)\in E$ condition  \eqref{thm11 mod} and conditions \eqref{thm12} and \eqref{thm13} of Theorem \nolinebreak \ref{thm: supermodular order of tree specifications generalized}.
        \item\label{prop42} If \(T\) is not a star, then for any \(j^*\in N\setminus (L\cup\{0\})\), there are Markov tree distributed sequences $X=(X_n)_{n\in N}$ and $Y=(Y_n)_{n\in N}$ with $X\nleq_{lo}Y$, such that  $X$ and $Y$ fulfill for all $e=(i,j)\in E$ condition \eqref{thm12 1mod} and conditions \eqref{thm11} and \eqref{thm13} of Theorem \ref{thm: supermodular order of tree specifications generalized}.
        \item\label{prop43} If \(T\) is not a chain, then for any \(j^*\in N\setminus (P\cup\{0\})\), there are Markov tree distributed sequences $X=(X_n)_{n\in N}$ and $Y=(Y_n)_{n\in N}$ with $X\nleq_{lo}Y$, such that $X$ and $Y$ fulfill for all $e=(i,j)\in E$ condition \eqref{thm12 2mod} and conditions \eqref{thm11} and \eqref{thm13} of Theorem \ref{thm: supermodular order of tree specifications generalized}.
    \end{enumerate}
    In particular, in each of the statements \eqref{prop41}-\eqref{prop43}, it holds that $X\not\leq_{sm}Y$.
\end{proposition}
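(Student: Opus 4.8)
The plan is to reduce each of the three parts to a computation on a three-node subtree and then to read off the final claim for free. The engine is the implication $X\leq_{sm}Y\Rightarrow X\leq_{lo}Y$ from \eqref{smpdo}, together with the elementary fact that the lower orthant order is inherited by margins: if $X\leq_{lo}Y$, then $X_A\leq_{lo}Y_A$ for every $A\subseteq N$, since a marginal distribution function is a monotone limit of the full one. Hence, once a part exhibits Markov tree distributed $X,Y$ meeting its hypotheses but with $X_A\not\leq_{lo}Y_A$ for some connected $A$, we obtain $X\not\leq_{lo}Y$ and therefore, by contraposition, $X\not\leq_{sm}Y$; this is precisely the concluding ``in particular'' assertion, so it requires no separate argument.

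\textbf{The embedding construction.}
In every case I would single out a connected three-node subtree $A$ of $T$ containing the one relaxed edge $(p_{j^*},j^*)$ — such an $A$ always exists once $|N|\geq3$, because in a tree at least one endpoint of any edge has a further neighbour — and place on its two edges the bivariate blocks of the matching small counterexample: Example~\ref{exp:a} for part (b), where $A$ is a chain whose interior node carries the dropped condition $Y_i\uparrow_{st}Y_j$; Example~\ref{exp:b} and its chain extension Example~\ref{exp:c} for part (c), where $A$ is a local star, respectively a chain sitting off the path $P$; and the transpose of these blocks for part (a), which moves the single dropped arrow from $Y$ onto the $X$-process. On every remaining edge of $T$ I would impose the comonotone copula $M$ together with the same univariate margins for $X$ and for $Y$. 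By Proposition~\ref{prop: existence of markov tree dirstribution} this specification determines unique Markov tree distributions $X,Y$, and since the margin of a Markov tree distribution onto a connected subtree is again the Markov tree distribution of the restricted specification (paths, hence separators, inside $A$ agree with those in $T$), the $A$-margin of $X$, resp.\ $Y$, coincides with the counterexample. The explicit orthant violation $P^{X_A}(\Theta)>P^{Y_A}(\Theta)$ on the box $\Theta$ already computed in that example is thus inherited, giving $X\not\leq_{lo}Y$.

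\textbf{Verifying the hypotheses.}
It then remains to verify the hypotheses of the relevant part. On each comonotone filler edge the pair is CI, hence stochastically increasing in both directions, so conditions \eqref{thm11} and \eqref{thm12} hold there automatically, and since the two margins are equal in law, $(X_i,X_j)\eqd(Y_i,Y_j)$, so \eqref{thm13} holds with equality. On the subtree $A$ the chosen example is, by design, stochastically increasing in exactly the directions demanded by the conditions that are kept, fulfils the bivariate ordering \eqref{thm13} through \eqref{eqbivord}, and carries on the relaxed edge only the weaker property $(X_{p_{j^*}}^\perp,X_{j^*}^\perp)\leq_{sm}(X_{p_{j^*}},X_{j^*})$ prescribed by \eqref{thm11 mod}, \eqref{thm12 1mod}, or \eqref{thm12 2mod}.

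\textbf{Main obstacle.}
The main difficulty is the orientation bookkeeping. For a given $j^*$ one must decide, according to whether $j^*$ is a leaf or internal and whether $p_{j^*}$ is the root, which three-node shape (chain or star) to use and how to label the counterexample so that the single dropped arrow lands exactly on $(p_{j^*},j^*)$ while all retained SI conditions — in particular the ``opposite-direction'' ones on $Y$ — stay valid and the orthant inequality keeps its sign; this is where the transpose in part (a) must be checked carefully. A secondary subtlety, already present in Example~\ref{exp: marginal distribution determines copula}, is that the small counterexamples use discrete, hence discontinuous, margins, for which conditional independence is not purely copula-based; one must therefore confirm compatibility of margins across the interface between $A$ and the comonotone part, which the filler settles since the comonotone copula merely copies the prescribed univariate margins.
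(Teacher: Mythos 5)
Your reduction (margins inherit $\leq_{lo}$, and $\leq_{sm}\Rightarrow\leq_{lo}$), your treatment of part \eqref{prop42} via Example \ref{exp:a} on the chain $p_{j^*}$--$j^*$--$k$, and the filler idea all agree with the paper's proof; the paper extends by independence rather than comonotonicity, but either filler is CI and gives equality in \eqref{thm13}, so that difference is immaterial. The problems lie in parts \eqref{prop41} and \eqref{prop43}.

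The central claim of your construction --- that a connected \emph{three-node} subtree containing the relaxed edge $(p_{j^*},j^*)$ always suffices --- is false for part \eqref{prop43}. Take $N=\{0,1,2,3,4\}$, $E=\{(0,1),(0,2),(2,3),(3,4)\}$, with designated path $P=\{1\}$, $k^*=2$, and $j^*=4$. The only three-node subtree containing the relaxed edge $(3,4)$ is the chain $2$--$3$--$4$, and the conditions you must retain there are $X_3\uparrow_{st}X_2$ and $X_4\uparrow_{st}X_3$ (from \eqref{thm11}), both $Y_2\uparrow_{st}Y_3$ and $Y_3\uparrow_{st}Y_2$ (at edge $(2,3)$ both directions are required by \eqref{thm12 2mod}, since $3\notin L$ and $3\notin P\cup\{j^*\}$), and \eqref{thm13} on both edges. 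But these retained conditions already contain the hypotheses of Theorem \ref{thm: supermodular order of tree specifications generalized} applied to the chain $2$--$3$--$4$ as a tree in its own right (root $2$, admissible path $\{3,4\}$, and $k^*$-node $3$): the relaxation at $(3,4)$ is vacuous on the subtree, because $j^*$ lies on the subtree's own admissible path. Hence \emph{any} blocks satisfying your retained conditions yield $(X_2,X_3,X_4)\leq_{sm}(Y_2,Y_3,Y_4)$, and no orthant violation can live on those three nodes. The relaxation in \eqref{thm12 2mod} only has bite relative to the global path $P$, which can be arbitrarily far from $j^*$; a counterexample must be a chain reaching back to an edge whose child lies in $P$, where the violated SI condition is exempt. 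This is exactly why the paper built Example \ref{exp:c} as a comonotone chain extension of Example \ref{exp:b} and, in part \eqref{prop43}, runs it along the possibly long chain $k$--$i$--$\dots$--$j^*$ with $k\in P\cap c_i$.

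Part \eqref{prop41} has a second gap: ``transposing the blocks'' is not the operation that moves the dropped arrow onto $X$. Transposition $B^\top(u,v)=B(v,u)$ reverses edge orientations within one process, whereas what is needed is to swap the roles of the two processes of Example \ref{exp:c}; the paper sets $X:=-Y'$ and $Y:=-X'$, a role swap combined with a global reflection, whose copula counterpart is the survival copula, not the transpose. After the swap, \eqref{thm13} holds in the $\geq_{sm}$ direction, so the conclusion being contradicted is $X\geq_{sm}Y$, and the violation inherited from Example \ref{exp:c}'s lower-orthant computation enters through \emph{upper} orthant indicators of the reflected vectors; this sign bookkeeping, which you explicitly defer, is where the actual content lies. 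As written, the proposal establishes part \eqref{prop42} but not parts \eqref{prop41} and \eqref{prop43}.
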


\section{Proofs}\label{secproofs}

\subsection{Proofs of Section \ref{secintro}}\label{appendix: maintheorem}

In this section, we provide the proof of Theorem \ref{thm: supermodular order of tree specifications generalized} which combines Lemma \ref{lemfacmod} with an extension of the conditioning argument in \cite[Theorem 3.2]{Hu-2000} to a non-stationary setting and to Markovian tree structures. 
Working with trees requires additional tools, which we introduce in the sequel.

To navigate through a tree, we define the commonly used terms for parent, child, descendant, and ancestor relationships as follows.

\begin{definition}[Parent, child, descendant, ancestor, degree]\label{def:childs_descendants_ancestors}~\\
	Let $\bT=(\bN,\bE)$ be a tree with root \(0\in N\).
	\begin{enumerate}[(i)]
		\item  The \emph{parent} of $i\in N \backslash\{0\}$  is defined as the unique element \(p_i\in N\) such that $(p_i,i)\in E$.
		\item  The set of \emph{children} of $i\in N$ is defined by $c_i\coloneqq \{j\in \bN\vert  i=p_j  \}.$
		\item The set of \emph{descendants} of $i\in \bN$ is defined by $d_i\coloneqq \{j\in \bN\vert \text{ there exists}$ $\text{a directed path from $i$ to $j$}\}.$
		\item The set of \emph{ancestors} of $i\in N\backslash\{0\}$ is defined by $a_i\coloneqq \{j\in \bN\vert  i\in d_j \}.$ 
  	\end{enumerate}
\end{definition}
Recall that the degree of a node $i\in N\backslash \{0\}$ is given by $\operatorname{deg}(i)= |c_i|+1$ and $\operatorname{deg}(0)=|c_0|$. 
According to its definition, a node $i\in N$ is not included in the set of its children $c_i$, descendants $d_i$, or ancestors $a_i$. Note that the root \(0\in N\) is the only node without parent. 

The proof of Theorem \ref{thm: supermodular order of tree specifications generalized} is based on induction, which requires a specific enumeration of the nodes of the tree. To this end, we consider the level-order traversal, where the nodes of the tree are visited level by level, starting from the root. This is also known as a breadth-first search. 
Note that in the proofs, we only consider trees with finitely many children, as the ordering results need to be proved only for the finite-dimensional marginal distributions. 
Formally, we say that \(\pi=(\pi_n)_{n\in N}\) is an \emph{enumeration of $N$}  if it is a one-to-one map from $N$ to $N$. Further,  $\pi$ is a \emph{level-order traversal} if it is an enumeration such that for any pair of nodes $i$ and $j$ with $i < j$ (with respect to the order on \(\N\)), the node $\pi_i$ is in a lower or the same level of the tree compared to $\pi_j$, i.e., for all \(i,j,\in N,\)
\begin{align}\label{exp: enumeration of trees} 
     i<j \quad\text{implies}\quad |p[0,\pi_i]|\leq|p[0,\pi_j]|, 
\end{align}
where \(p[0,\pi_i]\) denotes an undirected path from the root to \(\pi\); see \eqref{eq: Notation path} below.
Note that this  enumeration is not uniquely determined.  We illustrate the level-order traversal in Figure \nolinebreak \ref{fig: level order traversal} using a tree with \(8\) nodes distributed across \(3\) levels.
	\begin{figure}[t]
		\centering
		\begin{tikzpicture}[level distance=12mm,
			level 1/.style={sibling distance=25mm},
			level 2/.style={sibling distance=10mm}]
			\node {1}
			child {node {6}
				child {node {5}}
				child {node {7}}
			}
			child {node {3}
				child {node {2}}
				child {node {4}} 
				child {node {8}} 
			};
		\end{tikzpicture}
		\quad
		\begin{tikzpicture}[level distance=12mm,
			level 1/.style={sibling distance=25mm},
			level 2/.style={sibling distance=10mm}]
			\node {$\pi_1$}
			child {node {$\pi_2$}
				child {node {$\pi_4$}}
				child {node {$\pi_5$}}
			}
			child {node {$\pi_3$}
				child {node {$\pi_6$}}
				child {node {$\pi_7$}}
				child {node {$\pi_8$}} 
			};
		\end{tikzpicture}
		\caption{A level-order traversal $\pi=(1,6,3,5,7,2,4,8)$ for the tree $T=(N,E)$ with nodes $N=\{1,\dots,8\}$ and edges $E=\{(1,6),(1,3),(6,5),(6,7),(3,2),(3,4),(3,8)\}$.}\label{fig: level order traversal}
	\end{figure}

To prove Theorem \ref{thm: supermodular order of tree specifications generalized}, we rely on several lemmas presented in the sequel. The following lemma for Markov tree distributions states that, if any random variable is SI in its ancestor random variables, then any random vector corresponding to descendant nodes is SI in the random variable associated with the current node. In the context of Markov processes, this means that if, at each time step, the present random variable is SI in the preceding variable, then all future variables are SI in the present variable; see \cite[Theorem 3.2]{fang1994decrease}. We define, similarly to the undirected path \(p(i,j)\) introduced in Section \ref{secdeftree}, the undirected paths 
\begin{align}
\begin{split}\label{eq: Notation path}
     p[i,j)\coloneqq\{i\}\cup p(i,j), \quad
     p(i,j]\coloneqq\{j\}\cup p(i,j), \quad
     p[i,j]\coloneqq\{i,j\}\cup p(i,j).
\end{split}
\end{align}
Moreover, for a subset of nodes $J\subseteq N $, we write \(x_J\coloneqq(x_j)_{j\in J}\) and $X_J\coloneqq(X_j)_{j\in J}$ to refer to the vector and random vector associated with these components. 
\begin{lemma}\label{lem: stochasticallyincreasing}
	Let $X$ be a sequence of random variables which follow a Markov tree distribution with respect to a tree $T=(N,E)$. Assume that $X_i\uparrow_{st} (X_j,\ j\in a_i)$ for all $i\in N\backslash\{0\}$. Then, for all $i\in N$ and for any finite subset $J\subseteq d_i$, it follows that $(X_j, j\in J)\uparrow_{st}X_i$, i.e., for any $\phi\colon \mathbb R^{|J|}\to \mathbb R$ increasing  and bounded, the mapping
    \begin{align}
        x_i\mapsto \int \phi(x_J)P^{X_J\vert X_i=x_i}(dx_J)
    \end{align}
    is increasing. 
\end{lemma}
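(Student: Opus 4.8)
The plan is to peel off the leaves of the relevant subtree one at a time via a backward induction, using the Markov tree property at each step to reduce a conditional expectation to a single edge, where the hypothesis supplies stochastic monotonicity.

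First I would reduce to the case in which $J\cup\{i\}$ induces a subtree of $T$ rooted at $i$. Given an arbitrary finite $J\subseteq d_i$, I enlarge it to the set $\widehat J\subseteq d_i$ obtained by adjoining every node lying on a directed path from $i$ to an element of $J$; then $\widehat J$ is finite, contains $J$, and $\widehat J\cup\{i\}$ is a subtree. Since the integral order $\leq_{st}$ is preserved under marginalisation — any bounded increasing $\phi$ on $\R^{J}$ extends to a bounded increasing function on $\R^{\widehat J}$ that ignores the additional coordinates — it suffices to establish $(X_j,\,j\in\widehat J)\uparrow_{st}X_i$.

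Next I would record the single-edge form of the hypothesis: for a non-root node $v$, its parent $p_v$ separates $\{v\}$ from the remaining ancestors $a_v\setminus\{p_v\}$, so by Markov tree dependence (Definition \ref{defMTD}) the conditional law $P^{X_v\mid X_{a_v}=x_{a_v}}$ depends on $x_{a_v}$ only through $x_{p_v}$ and coincides with $P^{X_v\mid X_{p_v}=x_{p_v}}$; hence $X_v\uparrow_{st}(X_j,\,j\in a_v)$ specialises to $X_v\uparrow_{st}X_{p_v}$. The core is then an induction on $n=|J|$ for subtrees $J\cup\{i\}$. The case $n=0$ is trivial. For the step, I pick a leaf $w$ of the subtree $J\cup\{i\}$ with parent $u:=p_w$, and put $J':=J\setminus\{w\}$, so that $J'\cup\{i\}$ is again a subtree with $n-1$ descendants. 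Because $w$ is a subtree leaf, $u$ separates $\{w\}$ from $(J'\cup\{i\})\setminus\{u\}$ in $T$, whence $X_w$ and $X_{(J'\cup\{i\})\setminus\{u\}}$ are conditionally independent given $X_u$. By the tower property,
\[
\E[\phi(X_J)\mid X_i=x_i]=\int \tilde\phi(x_{J'},x_i)\,P^{X_{J'}\mid X_i=x_i}(dx_{J'}),\qquad \tilde\phi(x_{J'},x_i):=\int \phi(x_{J'},x_w)\,P^{X_w\mid X_u=x_u}(dx_w),
\]
where $x_u$ denotes whichever of the coordinates $(x_{J'},x_i)$ corresponds to $u$. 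The function $\tilde\phi$ is bounded and increasing in all its arguments: in the coordinates other than $x_u$ this is inherited from $\phi$, while monotonicity in $x_u$ follows from the two-step estimate combining $\phi$ increasing in $x_w$ with $X_w\uparrow_{st}X_u$ (so that $P^{X_w\mid X_u=x_u}$ is $\leq_{st}$-increasing in $x_u$). To conclude that $x_i\mapsto\E[\phi(X_J)\mid X_i=x_i]$ is increasing, I apply the same two-step estimate once more: for $x_i\le x_i'$ I first replace $\tilde\phi(\cdot,x_i)$ by the pointwise-larger $\tilde\phi(\cdot,x_i')$, and then invoke the induction hypothesis $X_{J'}\uparrow_{st}X_i$, i.e.\ $P^{X_{J'}\mid X_i=x_i}\leq_{st}P^{X_{J'}\mid X_i=x_i'}$, tested against the bounded increasing integrand $\tilde\phi(\cdot,x_i')$.

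The step I expect to be the main obstacle is the combinatorial bookkeeping inside the induction: checking that the parent $u=p_w$ genuinely separates the chosen leaf $w$ from the rest of the current subtree within the full tree $T$ (which is precisely what makes the single-edge conditional independence available), and treating uniformly the two cases $u=i$ and $u\in J'$, in which $x_u$ is respectively the conditioning variable and an integration variable. Once the separation property and the joint monotonicity of $\tilde\phi$ are secured, the stochastic monotonicity propagates automatically along the induction.
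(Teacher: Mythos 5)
Your proof is correct, and although it shares the skeleton of the paper's argument---induction on $|J|$, peeling off one extremal node by disintegration, and a two-step estimate combining monotonicity of the integrand with stochastic monotonicity of a conditional kernel---it differs in a substantive way: you first close $J$ under the connecting paths $p(i,j)$, $j\in J$, so that $\widehat J\cup\{i\}$ induces a subtree, and only then induct. This buys you two things. First, every peeled leaf $w$ has its parent $u=p_w$ inside the conditioning set, so the only kernels you ever need are single-edge kernels $P^{X_w\mid X_u}$, which are SI directly by the hypothesis; the paper instead peels nodes of the original, unclosed $J$, so its kernels run along possibly long paths (its $P^{X_{j_\ell}\mid X_{j_m}}$), and it must invoke the Markov-chain composition result \cite[Theorem 3.2]{fang1994decrease} both in the base case $|J|=1$ and in its Case 1. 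Your induction in effect reproves that chain result rather than citing it, so your argument is self-contained. Second, and more importantly, your closure step eliminates the paper's case distinction, whose Case 2 is problematic as written: there the factorization $P^{X_J\mid X_i}=P^{X_K\mid X_i}\otimes P^{X_{j_n}\mid X_i}$ is claimed to follow ``by the Markov property'' under the condition $J\cap p(i,j)=\emptyset$ for all $j\in J$, but this condition does not imply that $i$ separates the nodes of $J$. For instance, if $a,b$ are two children of a node $c$ with $p_c=i$ and $J=\{a,b\}$ (so $c\notin J$), then the Case 2 condition holds, yet the path $a$--$c$--$b$ misses $i$, so Definition \ref{defMTD} gives conditional independence of $X_a$ and $X_b$ only given $X_c$, not given $X_i$, and the claimed product formula can fail. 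In your setup this configuration cannot arise, because the connecting node $c$ is adjoined to $\widehat J$ before the induction starts; the bookkeeping you flagged as the main obstacle (the separation of a subtree leaf from the rest by its parent, and the uniform treatment of $u=i$ versus $u\in J'$) goes through exactly as you describe. So your route both simplifies and repairs the paper's proof.
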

\begin{proof}
	We prove the result by induction over $|J|$. Let $|J|=1$, $J=\{j_1\}\subseteq d_i$ for \(i\in N\). Then, by assumption, $X_{k}\uparrow_{st}X_{p_k}$ for all $k \in p(i,j_1]$; see \eqref{eq: Notation path}.
   We conclude by \cite[Theorem 3.2]{fang1994decrease} that $X_{j_1}\uparrow_{st} X_i$.
    For the induction step, assume that the statement holds for all $J\subseteq d_i$ such that
    $|J|=n-1$. Suppose that $J$ has the form $J=\{j_1,\dots,j_n\}\subseteq d_i$. The following part of the proof is divided in two cases.
    First, we assume that there are two nodes in $J$ which are not separated by $i$, i.e., 
    \begin{align*}
      J\cap  \bigcup_{j\in J} p(i,j) \neq \emptyset.    
    \end{align*}
    Then there exists a node $j_\ell\in J$  such that $J\cap p(i,j_{\ell})\neq \emptyset$. In particular, the node $j_{\ell}$ can be chosen such that $j_{\ell}\notin p(i,j)$ for all $j\in J$. Let $j_k\in J$ be the last 
    node in $J$ in the directed path from $i$ to $j_\ell$, i.e.,
    $$j_m\coloneqq\operatorname{arg\,max}\Big\{|p(i,j)|\big\vert\,  j\in J\cap p(i,j_{\ell})\Big\}.$$
    For $K\coloneqq J\backslash \{j_\ell\}$ we have for any $\phi\colon \mathbb R^n\to \mathbb R$ increasing  and bounded that 
    \begin{align}
		&\nonumber \int \phi(x_J)P^{X_J\vert X_i=x_i}(dx_J)\\
		 \nonumber&=\int\int (x_{j_\ell},x_K)P^{X_{j_\ell}\vert X_{K}=x_{K},X_i=x_i}(dx_{j_\ell})P^{X_{K}\vert X_i=x_i}(dx_{K}) \\ 
		\label{eq: stochasticincreasing} &=\int\underbrace{\int \phi(x_{j_\ell},x_K)P^{X_{j_\ell}\vert X_{j_m}=x_{j_m}}(dx_{j_\ell})}_{\coloneqq \psi(x_{j_m},  x_{K\backslash \{j_m\})}} P^{X_{K}\vert X_i=x_i}(dx_{K}),
	\end{align}
    where we use the Markov tree dependence for the second equality.
    Applying \cite[Theorem 3.2]{fang1994decrease} once more, we conclude that  \(\psi\) is an increasing and bounded function in \(x_{j_m}\) for all $x_{K\backslash\{j_m\}}\in \mathbb R^{n-2}$. Moreover, since $\phi$ is increasing, also the function $\psi$ is increasing in $x_{K\backslash\{j_m\}}$ for all $x_{j_m}\in \mathbb R$. Consequently, $\psi$ is increasing in each argument. By the induction hypothesis, we have that $X_{K}\uparrow_{st}X_i$ since $|K|=n-1$ and $K\subseteq d_i$.  Hence, by the definition of the stochastic order, the function in \eqref{eq: stochasticincreasing} is increasing in $x_i$, proving that $X_J\uparrow_{st}X_i$.

    In the second case we assume that $J\cap p(i,j)=\emptyset$ for all $j\in J=\{j_1,\dots,j_n\}$. 
    For $K\coloneqq J\backslash \{j_n\}$ we have for any increasing and bounded $\phi\colon \mathbb R^n\to \mathbb R$ that 
    \begin{align}
        P^{X_J\vert X_i=x_i}=P^{X_{K}\vert X_i=x_i}\otimes P^{X_{j_n}\vert X_i=x_i}
    \end{align}
    by the Markov property. It follows that
    \begin{align}\label{eq: stochasticincreasing2}
	   \begin{split}
		&\int \phi(x_J)P^{X_J\vert X_i=x_i}(dx_J)
		=\int\int \phi(x_J)P^{X_{K}\vert X_i=x_i}(dx_{K})P^{X_{j_n}\vert X_i=x_i}(dx_{j_n}).
	   \end{split}
	\end{align}
    Since $X_{j_n}\uparrow_{st}X_i$ by assumption and, by the induction hypothesis,  $X_{K}\uparrow_{st}X_i$, we obtain that $X_J\uparrow_{st}X_i$ and the result is proven. 
\end{proof}

The following simple lemma, which we were unable to locate in the literature, shows that if all components $X_i$, $i\in N\backslash\{0\}$ of a Markov tree distributed random vector are SI in its parents, then $X_i$ is SI in all of its ancestors.
\begin{lemma}\label{lem:CIStrees}
	Let $X = (X_n)_{n\in N}$ be a random vector that has Markov tree dependence with respect to a tree $T=(N,E)$. If $X_j\uparrow_{st} X_i$ for all edges $(i,j)\in E$, then 
        \begin{align}\label{eq: CIS-T}
        X_i\uparrow_{st} (X_j,\ j\in a_i) \quad \text{for all nodes } i\in N\backslash\{0\}.
        \end{align}
\end{lemma}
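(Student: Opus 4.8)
The plan is to exploit the fact that, in a tree, the ancestor set $a_i$ of any non-root node $i$ is precisely the vertex set of the unique directed path from the root $0$ down to $i$ (excluding $i$ itself), and is therefore linearly ordered, with the parent $p_i$ as its last element. The parent $p_i$ separates $\{i\}$ from the remaining ancestors $a_i\setminus\{p_i\}$, since every undirected path from $i$ to any $j\in a_i\setminus\{p_i\}$ must pass through $p_i$. This observation is what lets the high-dimensional conditioning collapse to conditioning on a single variable.

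Concretely, I would first invoke the Markov tree dependence of Definition \ref{defMTD} with $A=\{i\}$ and $B=a_i\setminus\{p_i\}$, which are disjoint finite sets separated by the node $p_i$. This yields that $X_i$ and $X_{a_i\setminus\{p_i\}}$ are conditionally independent given $X_{p_i}$, so that for $P^{X_{a_i}}$-almost every $x_{a_i}$ one has the reduction $P^{X_i\mid X_{a_i}=x_{a_i}}=P^{X_i\mid X_{p_i}=x_{p_i}}$. Consequently, for every bounded increasing $f\colon\R\to\R$, the conditional expectation $\E[f(X_i)\mid X_{a_i}=x_{a_i}]$ equals $\E[f(X_i)\mid X_{p_i}=x_{p_i}]$ and hence depends on the conditioning vector only through its $p_i$-coordinate.

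It then remains to check the coordinatewise monotonicity required by the definition \eqref{defconin} of $X_i\uparrow_{st}(X_j,\ j\in a_i)$. For any ancestor $j\neq p_i$ the reduced map $x_j\mapsto \E[f(X_i)\mid X_{a_i}=x_{a_i}]$ is constant, hence trivially increasing; for $j=p_i$ it is the map $x_{p_i}\mapsto\E[f(X_i)\mid X_{p_i}=x_{p_i}]$, which is increasing precisely because $(p_i,i)\in E$ and the hypothesis gives $X_i\uparrow_{st}X_{p_i}$. Since this holds for all bounded increasing $f$ and outside a $P^{X_{a_i}}$-null set, the SI property \eqref{eq: CIS-T} follows.

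The argument is short, and I do not expect a serious obstacle; the only point requiring care is the bookkeeping of the almost-sure qualifiers---that the conditional-independence reduction $P^{X_i\mid X_{a_i}=x_{a_i}}=P^{X_i\mid X_{p_i}=x_{p_i}}$ holds outside a null set and is compatible with the a.s.\ phrasing of the SI definition---together with the degenerate case $a_i=\{p_i\}$ (i.e.\ $p_i=0$), where there is nothing to separate and the claim reduces immediately to the edge assumption $X_i\uparrow_{st}X_{p_i}$.
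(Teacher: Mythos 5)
Your proposal is correct, and its engine is the same one the paper runs on: the Markovian collapse of conditioning on all ancestors to conditioning on the parent alone, after which the edge hypothesis $X_i\uparrow_{st}X_{p_i}$ supplies the only nontrivial coordinatewise monotonicity. The packaging, however, is genuinely different. The paper first reduces to the chain case (since $a_i\cup\{i\}$ is the directed path from the root to $i$, along which $X$ is a Markov process) and then argues by induction that a Markov chain with $X_{k+1}\uparrow_{st}X_k$ for all $k$ is CIS; the decisive step of that induction is the equivalence $X_{n+1}\uparrow_{st}(X_j,\ j\in\{0,\dots,n\})\Leftrightarrow X_{n+1}\uparrow_{st}X_n$, which is exactly your collapse. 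You instead apply Definition \ref{defMTD} once, directly on the tree, with $A=\{i\}$, $B=a_i\setminus\{p_i\}$ and separating node $p_i$, obtaining $P^{X_i\mid X_{a_i}=x_{a_i}}=P^{X_i\mid X_{p_i}=x_{p_i}}$ and hence \eqref{eq: CIS-T} in a single step. What your route buys is brevity and the observation that neither the reduction to chains nor the induction is needed when one only conditions on the full ancestor set, which is all the lemma asserts; what the paper's route nominally buys is the stronger CIS conclusion for chains, which involves conditioning on arbitrary subsets of predecessors---a claim that, as written, needs an additional mixing argument the paper leaves implicit and that your one-shot argument deliberately avoids. Your handling of the degenerate case $p_i=0$ and of the almost-sure qualifiers is also appropriate.
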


\begin{proof}
    In order to prove the statement, it is sufficient to prove that a Markov process $(X_0,\dots,X_n)$ is \CIS if $X_{i+1}\uparrow_{st} X_{i}$ for all $i=0,\dots,n-1$. For $n=2$, the statement is trivial. By induction, we have to prove that $(X_0,\dots,X_{n+1})$ is \CIS if $(X_0,\dots,X_n)$ is \CIS and $X_{n+1}\uparrow_{st} X_{n}$. This follows if $X_{n+1}\uparrow_{st} (X_{j},\ j\in \{1,\dots,n\}).$ By the Markov property it holds that
    \begin{align}
			X_{n+1}\uparrow_{st}(X_j,\ j\in\{0,\dots,n\})\Leftrightarrow X_{n+1}\uparrow_{st} X_{n},
		\end{align}
    and the result is proven. 
\end{proof}

 The following lemma extends \cite[Theorem 3.1]{Hu-2000} from Markov processes to Markov tree distributions. It provides sufficient conditions under which an integral of a supermodular function, with respect to a conditional distribution, results in a supermodular function that depends on both the conditioning variable and the variables that remain unintegrated. 
 Recall that \(p[i,j]\) denotes the undirected path from \(i\) to \(j\), including \(i\) and \(j.\)

\begin{lemma}[Supermodular conditional integration]\label{lem: supermodular}\ 
\begin{enumerate}[(i)]
    \item\label{lem: supermodular i} Let $X$ be a random vector that follows a Markov tree distribution with respect to a tree $T=(N,E)$. Let $i\in N$ and let $J\subseteq d_i$ be a finite set. Assume that $X_{j'}\uparrow_{st}X_{j}$ for all $(j,j')\in E$. Moreover let $K$ be an arbitrary finite index set and $\phi$ be a supermodular function on $\R^{1+|K|+|J|}$. 
	Then  the function $\varphi\colon \R^{|K|+1}\to\R$ given by
	\begin{align}\label{eq: phi supermodular}
		 \varphi(z_{i},z_K) := \int \phi(z_{i},z_K,z_J) P^{X_J\vert X_{i }=z_{i}}(dz_J),
	\end{align}
is supermodular.
\item\label{lem: supermodular ii}
   The assumption $X_{j'}\uparrow_{st}X_{j}$ for all $(j,j')\in E$ in \eqref{lem: supermodular i} can be weakened to $X_{j'}\uparrow_{st}X_j$ for all edges $(j,j')\in\{(j,j')\in E\mid j,j'\in  \bigcup_{u\in J}p[i,u]\}$ of the subtree that consists of all paths from node \(i\) to its descendants in \(J\subseteq d_i.\)
\end{enumerate}
		
\end{lemma}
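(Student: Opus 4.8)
The plan is to use the standard fact that a function on a product space is supermodular if and only if it is supermodular in each pair of coordinates separately, with the remaining coordinates held fixed. Hence it suffices to verify the two-coordinate increasing-differences property of $\varphi(z_i,z_K)$ for (a) every pair $(z_k,z_{k'})$ with $k,k'\in K$, and (b) every pair $(z_i,z_k)$ with $k\in K$. The pairs in (a) are immediate: since the conditional measure $P^{X_J\mid X_i=z_i}$ does not depend on the coordinates indexed by $K$, and $\phi$ is supermodular in $(z_k,z_{k'})$ for every fixed $z_J$, integrating the pointwise supermodularity inequality against this fixed measure preserves it.

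The real work is the pair $(z_i,z_k)$, where $z_i$ enters both the integrand and the conditioning measure. First I would record that the hypothesis $X_{j'}\uparrow_{st}X_j$ for all edges $(j,j')\in E$ yields, via Lemma~\ref{lem:CIStrees}, that $X_i\uparrow_{st}(X_j,\ j\in a_i)$ for all $i$, and hence, via Lemma~\ref{lem: stochasticallyincreasing}, the key monotonicity $X_J\uparrow_{st}X_i$ for $J\subseteq d_i$. Fixing $z_{K\setminus\{k\}}$ and taking $z_k\le z_k'$, set $\psi(z_i,z_J):=\phi(z_i,z_k',z_{K\setminus\{k\}},z_J)-\phi(z_i,z_k,z_{K\setminus\{k\}},z_J)$ and $\Delta(z_i):=\int\psi(z_i,z_J)\,P^{X_J\mid X_i=z_i}(dz_J)$. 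I would show $\Delta$ is increasing by splitting, for $z_i\le z_i'$,
\[
\Delta(z_i')-\Delta(z_i)=\underbrace{\int\!\big[\psi(z_i',z_J)-\psi(z_i,z_J)\big]\,P^{X_J\mid X_i=z_i'}(dz_J)}_{A}+\underbrace{\int\!\psi(z_i,z_J)\,\big(P^{X_J\mid X_i=z_i'}-P^{X_J\mid X_i=z_i}\big)(dz_J)}_{B}.
\]
Here $A\ge 0$ because its integrand is the mixed second difference of the supermodular $\phi$ in the pair $(z_i,z_k)$, hence pointwise nonnegative; and $B\ge 0$ because supermodularity of $\phi$ makes $z_J\mapsto\psi(z_i,z_J)$ increasing, while $X_J\uparrow_{st}X_i$ gives $P^{X_J\mid X_i=z_i'}\geq_{st}P^{X_J\mid X_i=z_i}$ for $z_i'\ge z_i$. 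Thus $\Delta(z_i')\ge\Delta(z_i)$, which is exactly the increasing-differences property of $\varphi$ in $(z_i,z_k)$, completing part~\eqref{lem: supermodular i}.

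For part~\eqref{lem: supermodular ii} the observation is that both the conditional law $P^{X_J\mid X_i=z_i}$ and the monotonicity $X_J\uparrow_{st}X_i$ are determined solely by the subtree $T'$ with node set $N':=\bigcup_{u\in J}p[i,u]$ rooted at $i$. I would marginalize the Markov tree distribution to $N'$: since in a tree the unique path between two nodes of a connected subset stays within that subset, $N'$ induces a connected subtree, separation in $T'$ coincides with separation in $T$ for nodes of $N'$, and the marginal again has Markov tree dependence with respect to $T'$ with the same conditional law of $X_J$ given $X_i$. The weakened hypothesis is precisely $X_{j'}\uparrow_{st}X_j$ for every edge of $T'$, so Lemmas~\ref{lem:CIStrees} and~\ref{lem: stochasticallyincreasing} applied within $T'$ again deliver $X_J\uparrow_{st}X_i$; the argument of part~\eqref{lem: supermodular i} then carries over verbatim, since it invoked the global edgewise SI assumption only through this single consequence.

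The main obstacle is the pair $(z_i,z_k)$: isolating the two channels through which $z_i$ acts—the integrand and the measure—and checking that each contributes nonnegatively, which is where supermodularity of $\phi$ and the property $X_J\uparrow_{st}X_i$ are used in tandem. A secondary point needing care is the justification that marginalizing to the connected subtree $T'$ preserves both the Markov tree property and the conditional law $X_J\mid X_i$, so that the weaker edgewise hypothesis of part~\eqref{lem: supermodular ii} indeed suffices.
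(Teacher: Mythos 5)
Your proposal is correct and follows essentially the same route as the paper: both reduce supermodularity to pairwise increasing differences, and your $A+B$ split for the pair $(z_i,z_k)$ is exactly the paper's two-step chain of inequalities (one step from $\psi(z_i,\cdot)$ increasing in $z_J$ together with $X_J\uparrow_{st}X_i$ via Lemmas~\ref{lem:CIStrees} and~\ref{lem: stochasticallyincreasing}, the other from $\psi(\cdot,z_J)$ increasing in $z_i$ by supermodularity). Part~\eqref{lem: supermodular ii} is likewise handled as in the paper by restricting to the subtree $\bigcup_{u\in J}p[i,u]$; your explicit remark that marginalizing to a connected subtree preserves the Markov tree property and the conditional law is a justification the paper leaves implicit.
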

\begin{proof}
\eqref{lem: supermodular i} 
By the supermodularity of $\phi$ we obtain for $k_1,k_2\in K$, \(k_1\ne k_2,\) that the map
	\begin{align}\label{eq: supermodularity in i1 and i2}
		(z_{k_1},z_{k_2})\mapsto \varphi(z_{i},z_{k_1},z_{k_2},z_{K\backslash\{k_1,k_2\}} )
	\end{align} is supermodular for all $(z_{i},z_{K\backslash\{k_1,k_2\}})\in \mathbb R^{1+|K|-2}$. 
 In the following we show for  $k\in K$  that the map 
	\begin{align}\label{eq: supermodularity in x and i}
		(z_{i},z_{k})\mapsto \varphi(z_{i},z_k,z_{K\backslash\{k\}})
	\end{align}
	is supermodular for all $z_{K\backslash\{k\}}\in \mathbb R^{|K|-1}$.
	We fix $z_{k}<z_{k}'$. By the supermodularity of $\phi$ it follows for all $j\in J$ 
    that the map 
	\begin{align}\label{eq: lemma4 monotonoicity zj}
        z_{j}\mapsto \phi(z_{i},z'_k,z_{K\backslash\{k\}},z_j,z_{J\backslash \{j\}})-\phi(z_{i},z_k,z_{K\backslash\{k\}},z_j,z_{J\backslash \{j\}})
	\end{align}
	is increasing  for all $z_{J\backslash\{j\}}\in \mathbb R^{|J|-1}$ and  $z_{K\backslash\{k\}}\in\mathbb R^{|K|-1}$. Similarly, the map 
    \begin{align}\label{eq: lemma4 monotonicity for i}
        z_{i}\mapsto \phi(z_{i},z'_k,z_{K\backslash\{k\}},z_J)-\phi(z_{i},z_k,z_{K\backslash\{k\}},z_J)
    \end{align}
    is increasing for all $z_{J}\in \mathbb R^{|J|}$ and  $z_{K\backslash\{k\}}\in\mathbb R^{|K|-1}$. By Lemma \ref{lem:CIStrees} the assumption 
    $X_{j'}\uparrow_{st}X_{j}$ for all $(j,j')\in E$ implies that $X_{j}\uparrow_{st}(X_{j'},j'\in a_{j})$  for all $j\in N\backslash\{0\}$. Thus, we can conclude from Lemma \ref{lem: stochasticallyincreasing} that $(X_j, j\in J)\uparrow_{st}X_i$.
    Now, for $z_{i}<z_{i}'$, we obtain that
	\begin{align}\begin{split}\label{eq: lemma4 monotonicity}
	&\varphi(z_{i},z_k',z_{K\backslash \{k\}})-	\varphi(z_{i},z_k,z_{K\backslash \{k\}})\\
		&\leq\int \phi(z_{i},z'_k,z_{K\setminus \{k\}},z_J) P^{X_J\vert X_{i }=z'_{i}}(dz_J)-\int \phi(z_{i},z_k,z_{K\backslash \{k\}},z_J) P^{X_J\vert X_{i }=z'_{i}}(dz_J)\\
		& \leq \varphi(z'_{i},z_k',z_{K\backslash \{k\}})-	\varphi(z'_{i},z_k,z_{K\backslash \{k\}}),
    \end{split}
	\end{align}
 where the first inequality follows by $(X_j, j\in J)\uparrow_{st}X_i$ and by the increasingness of \eqref{eq: lemma4 monotonoicity zj} for \(j\in J.\) The second inequality holds true because the function in \eqref{eq: lemma4 monotonicity for i} is increasing. 
Now, \eqref{eq: lemma4 monotonicity} implies
\begin{align*}
	\varphi(z'_{i},z_k',z_{K\backslash \{k\}})+	\varphi(z_{i},z_k,z_{K\backslash \{k\}})\geq 	\varphi(z'_{i},z_k,z_{K\backslash \{k\}})+	\varphi(z_{i},z'_k,z_{K\backslash \{k\}})
\end{align*}
and we obtain the supermodularity of \eqref{eq: supermodularity in x and i}. Finally, the supermodularity in the cases \eqref{eq: supermodularity in i1 and i2} and \eqref{eq: supermodularity in x and i} yields the supermodularity of \eqref{eq: phi supermodular}, which proves the statement. 

\eqref{lem: supermodular ii} The second statement follows by applying \eqref{lem: supermodular i} for the subtree $T'=(N',E')$ with root $i\in N$, nodes $N'=\bigcup_{u\in J}p[i,u] $ and edges $E'=\{(j,j')\in E \mid j,j'\in  N'\}$.
\end{proof}

The following proposition is a version of Theorem \ref{thm: supermodular order of tree specifications generalized} under additional SI assumptions. 
We briefly outline the idea behind the proof of this proposition.
We start with the distribution of $X=(X_n)_{n\in N}$ and iteratively replace the dependence specifications of \(X\) with those of \(Y\) over all star structures within the tree $T = (N, E)$. This procedure uses a conditioning argument and applies Lemma \ref{lemfacmod} on the supermodular comparison of star structures. 
More precisely, for the induction, we navigate through the tree using a level-order traversal enumeration as described in \eqref{exp: enumeration of trees}. A disintegration argument reduces the proof at each step to the setting of Lemma \ref{lemfacmod}. Finally, by leveraging the transitivity of the supermodular order, we obtain a supermodular comparison between $X$ and $Y$. 
In the proof of Theorem \ref{thm: supermodular order of tree specifications generalized}, we further show that several SI conditions along a specified path can be omitted. 
 \begin{proposition}\label{thm: supermodular order of tree specifications}
Let $X=(X_n)_{n\in N}$ and $Y=(Y_n)_{n\in N}$ be sequences of random variables each following a Markov tree distribution with respect to a tree $T=(N,E)$. Let
	\begin{enumerate}[(i)]
		\item \label{propsmspec1} $X_j\uparrow_{st}X_i$ for all $(i,j)\in E$,
		\item \label{propsmspec2} $Y_i\uparrow_{st}Y_j$ for all $(i,j)\in  E, j\notin L$, and $Y_j\uparrow_{st}Y_i$ for all $(i,j)\in E$, 
		\item \label{propsmspec3} $(X_i,X_j) \leq_{sm}	(Y_i,Y_j)$ (resp.\@ $ \geq_{sm}$) for all \((i,j)\in E\).
	\end{enumerate}
	Then $X\leq_{sm}Y$ (resp.\@ $ \geq_{sm}$).
\end{proposition}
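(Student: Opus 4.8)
The plan is to reduce to a finite tree and then run a telescoping argument over the stars of $T$. Because the supermodular order of sequences is defined through finite-dimensional marginals (Definition \ref{defdepord}\eqref{defdepordb}) and is closed under marginalization, it suffices to treat finite $N$: any finite set of indices can be completed to a finite subtree containing the root, whose marginal is again a Markov tree distribution with the same bivariate specifications and still obeys \eqref{propsmspec1}--\eqref{propsmspec3}. Note that \eqref{propsmspec3} already forces $X_n\eqd Y_n$ for all $n$ (the supermodular order is a pure dependence order, see \eqref{smpdo}), so $X$ and $Y$ share all univariate marginals. I would enumerate the internal nodes $v_1,\dots,v_s$ (those possessing a child) in level order \eqref{exp: enumeration of trees}, so that parents precede children, and define Markov tree distributions $Z^{(0)},\dots,Z^{(s)}$ on $T$ with the common marginals in which the downward edges of $v_1,\dots,v_k$ carry the $Y$-copulas and all remaining edges the $X$-copulas. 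Then $Z^{(0)}=X$ and $Z^{(s)}=Y$, and by transitivity it is enough to prove $Z^{(k-1)}\leq_{sm}Z^{(k)}$ for each $k$; these two vectors differ only on the star centred at $v:=v_k$, where $Z^{(k-1)}$ uses $B_{(v,w)}$ and $Z^{(k)}$ uses $C_{(v,w)}$ on the edges $(v,w)$, $w\in c_v$.

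To compare $Z^{(k-1)}$ and $Z^{(k)}$ I would reduce an arbitrary supermodular test function $f$ to a supermodular function of the star variables $(X_v,X_{c_v})$ by conditionally integrating out everything else, and check that the reduced function is the same for both vectors (they agree off the star). Let $D=\bigcup_{w\in c_v}d_w$ be the part strictly below the children and $U=N\setminus(\{v\}\cup d_v)$ the part above $v$; by the Markov property $X_D$ and $X_U$ are conditionally independent given the star. Integrating out $D$ first, child by child against $P^{X_{d_w}\mid X_w}$, preserves supermodularity by Lemma \ref{lem: supermodular}\eqref{lem: supermodular i}, since these subtree edges carry $X$-copulas and satisfy $X_j\uparrow_{st}X_i$ by \eqref{propsmspec1}; the result is identical for both vectors because the subtree laws are unchanged.

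The delicate step is integrating out $U$, as Lemma \ref{lem: supermodular} only removes descendants. I would re-root the (orientation-independent) Markov tree at $v$, turning $U$ into descendants of $v$, and apply Lemma \ref{lem: supermodular}\eqref{lem: supermodular ii} to the subtree joining $v$ to $U$; this is exactly where the asymmetric SI hypotheses are needed. The edges on the path from the old root to $v$ become reversed, so the required re-rooted condition reads $Y_{\mathrm{parent}}\uparrow_{st}Y_{\mathrm{child}}$. The level order has already converted precisely these path edges to $Y$-copulas before step $k$, and their old-child endpoints lie on the path towards $v$ and are hence internal, so this is delivered by the clause ``$Y_i\uparrow_{st}Y_j$ for $j\notin L$'' of \eqref{propsmspec2}. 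Every other edge of $U$ keeps its orientation and only needs the child-in-parent direction, which holds for $X$-edges by \eqref{propsmspec1} and for $Y$-edges by ``$Y_j\uparrow_{st}Y_i$ for all $(i,j)\in E$'' of \eqref{propsmspec2}; thus a mixture of $B$- and $C$-copulas inside $U$ is harmless. Integrating $U$ against the common law $P^{X_U\mid X_v}$ then yields one and the same supermodular function $f_2(X_v,X_{c_v})$ for both $Z^{(k-1)}$ and $Z^{(k)}$, and I expect this upper integration to be the main obstacle of the whole proof.

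Finally I would compare the two stars. Given $X_v$ the children are conditionally independent, so $(Z^{(k-1)}_v,Z^{(k-1)}_{c_v})$ and $(Z^{(k)}_v,Z^{(k)}_{c_v})$ are precisely the conditionally independent structures of Lemma \ref{lemfacmod}: the arm conditions $X_w\uparrow_{st}X_v$ and $Y_w\uparrow_{st}Y_v$ follow from \eqref{propsmspec1} and \eqref{propsmspec2}, while $(X_v,X_w)\leq_{sm}(Y_v,Y_w)$ is \eqref{propsmspec3}. Lemma \ref{lemfacmod} then gives $(Z^{(k-1)}_v,Z^{(k-1)}_{c_v})\leq_{sm}(Z^{(k)}_v,Z^{(k)}_{c_v})$, and supermodularity of $f_2$ yields $\E f(Z^{(k-1)})\leq\E f(Z^{(k)})$; telescoping over $k$ produces $X\leq_{sm}Y$. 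For the reversed statement I would exchange the two star structures in Lemma \ref{lemfacmod} — which is legitimate because its SI hypotheses are symmetric in the two vectors and only the bivariate comparison \eqref{propsmspec3} flips — turning every step into $Z^{(k-1)}\geq_{sm}Z^{(k)}$ and hence $X\geq_{sm}Y$.
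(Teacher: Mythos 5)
Your proposal is correct and follows essentially the same route as the paper's proof: a level-order telescoping over the stars of the tree, supermodularity of the conditionally integrated test function via Lemma \ref{lem: supermodular}\eqref{lem: supermodular ii} after re-rooting at the star's centre (with exactly the same bookkeeping of which SI clause covers which edge after the path edges have been converted to $C$-copulas), and Lemma \ref{lemfacmod} for the star-wise comparison, including the legitimate role swap for the reversed ordering. The only differences are cosmetic: you integrate out the descendants before the upper part whereas the paper proceeds in the reverse order, and your notation $P^{X_U\mid X_v}$ should be read as the common conditional law of the two interpolating vectors (which on already-processed edges inside $U$ carries $C$-copulas rather than $B$-copulas), a fact your argument otherwise uses correctly.
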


\begin{proof} 
    By definition of the stochastic orderings for stochastic processes (see Definition \ref{defdepord}), we can assume that \(d +1 := |N| < \infty\) and  \(N=\{0,\ldots,d\}\). We only show the case that \(X\leq_{sm} Y\) if $(X_i,X_j) \leq_{sm}	(Y_i,Y_j)$  for all $(i,j)\in E$. 
    In the case where \((X_i,X_j)\geq_{sm} (Y_i,Y_j)\) for all $(i,j)\in E$, it follows analogously that \(X\geq_{sm} Y\) by reversing all inequality signs. \\
    Denote by $F=(F_0,\dots,F_d)$ the vector of univariate marginal distribution functions of $X$ and $Y$. Note that \(F_{X_n} = F_{Y_n}\) for all \(n\in N\) because \((X_i,X_j)\leq_{sm} (Y_i,Y_j)\) implies \(X_i\eqd Y_i\) and \(X_j\eqd Y_j.\) Moreover, for $(i,j)\in E$, denote by $B_{(i,j)}$ and $C_{(i,j)}$ a copula for $(X_i,X_j)$ and $(Y_i,Y_j)$, respectively, and define $B=(B_e)_{e\in E}$ and $C=(C_e)_{e\in E}$. 
    It follows that $X\sim \cM(F,B, T)$ and $Y\sim \cM(F,C, T)$; see 
    Proposition \ref{prop: existence of markov tree dirstribution}. Let $(\pi_n)_{n=1,\dots,d}$ be a level-order traversal of $T$ as defined in \eqref{exp: enumeration of trees}. 
	For $i\in N$, define the set of edges connecting the node \(i\) with its children by $[i,c_i]\coloneqq \{(i,j)\vert j\in c_i \}\subseteq E$. Then \([i,c_i]\) has a star-like structure. Note that if $i\in L$, then $c_i=\emptyset$ and thus $[i,c_i]=\emptyset$. For $k=0,\dots,d-1$, let $D^k=(D^k_{e})_{e\in E}$ be the set of bivariate copulas defined by
\begin{align}\label{defbivcopseq}
	D_e^k\coloneqq 
	\begin{cases}
		C_e&\text{if } e\in [\pi_n,c_{\pi_n}]\text{ for }1\leq  n \leq k,\\
		B_e&\text{if } e\in [\pi_n,c_{\pi_n}]\text{ for } k< n\leq d.\\
	\end{cases}
\end{align}
i.e., the specifications of the edges in the first \(k\)  stars in level-order traversal are replaced by the bivariate copula specifications of \(Y\), and the remaining edges are specified by the bivariate copulas associated to \(X.\)
For \(k\in \{0,\ldots,d-1\},\) consider the random vector $Z^{(k)}\sim \cM(F,T,D^k)$. Note that $D^0=B$ and $D^{d-1}=C$, which implies that $Z^{(0)}\overset{\text{d}}{=}X$ and $Z^{(d)}\overset{\text{d}}{=}Y$. We show that 
	\begin{align}\label{eq: equation for Zi}
	Z^{(k-1)}\leq_{sm} 	Z^{(k)} \quad\text{ for all }k=1,\dots,d-1.
\end{align}
Then the statement follows from the transitivity of the supermodular order.
To prove \eqref{eq: equation for Zi}, fix $k\in\{1,\dots,d-1\}$. If $\pi_k\in L$ we have that $D^{k-1}=D^k$ and thus $Z^{(k-1)}\overset{\text{d}}{=} 	Z^{(k)}$ which trivially implies $Z^{(k-1)}\leq_{sm} Z^{(k)}$.
If $\pi_k\notin L$, define $i\coloneqq\pi_{k}\in N$ and denote by $\{j_1,j_{2},\dots,j_{m} \}\coloneqq c_i$ an enumeration of the children $c_i$ of node \(i.\)  Note  that, by the definition of the bivariate copula sequences in \eqref{defbivcopseq}, \(D^{k-1}\) and \(D^k\) can only differ in the copulas which correspond to  the edges $[i,c_i]$ between node \(i\) and its children, i.e.\@, $D_e^{k-1}=D_e^{k}$ for all $e\in E\backslash [i,c_i]$. 
For proving \eqref{eq: equation for Zi}, it is sufficient to show for any bounded supermodular function \(\phi\colon \R^{d+1}\to \R\) that 
\begin{align}\label{eq: equation for Zi1}
	E[\phi(Z^{(k-1)})]\leq 	E[\phi(Z^{(k)})]\,;
\end{align}
see \cite[Theorem 3.9.14]{Mueller-Stoyan-2002}.
Recall that \(d_i\) denotes the descendants of node \(i\,;\) see Definition \ref{def:childs_descendants_ancestors}.
Then we define the function $\varphi^{(0)}$ on $\mathbb R^{|d_i|+1}$ by 
\begin{align*}
	\varphi^{(0)}(z_i,z_{d_i})\coloneqq \int\phi(z_i,z_{d_i},z_{N\backslash (\{i\}\cup d_i)})P^{Z^{(k)}_{N\backslash (\{i\}\cup d_i)}\vert Z^{(k)}_i=z_i} (dz_{N\backslash (\{i\}\cup d_i)}),
\end{align*}
i.e., we integrate over all variables conditional on \(z_i\) except of the variables associated to node \(i\) and its descendants \(d_i.\) 
Then, we define recursively over the children of node \(i\) for $\ell=1,\dots,m$ the functions \(\varphi^{(l)}\colon \R^{1+m+\sum_{r=\ell+1}^m |d_{j_r}|} \to \R\) by
\begin{align*}
	&\varphi^{(\ell)}(z_i,z_{c_i},z_{d_{j_{\ell+1}}},\dots, z_{d_{j_m}}) 
    \coloneqq \int\varphi^{(\ell-1)}(z_i,z_{c_i},z_{d_{j_{\ell}}},\dots, z_{d_{j_m}})P^{Z_{d_{j_\ell}}^{(k)}\vert Z^{(k)}_{j_\ell}=z_{j_\ell}} (dz_{d_{j_\ell}}),
\end{align*}
i.e., the function $\varphi^{(\ell)}$ is obtained through \(\varphi^{(\ell-1)}\) by integrating over the variables associated to all descendants $d_{j_{\ell}}$ of the $\ell$'th child of $i$ conditioned on the variable associated with $j_\ell$. Finally, $\varphi^{(m)}$ is a function in $(z_i,z_{j_1},\dots,z_{j_m})=(z_i,z_{c_i})$.

To prove supermodularity of $\varphi^{(0)}$, we will apply Lemma \ref{lem: supermodular}. Recall that \(p[0,i]\) denotes the path from \(0\) up to and including node \(i\). From the assumptions \eqref{propsmspec2} and \eqref{propsmspec1} we know, in particular, that 
\begin{align}
\begin{split}\label{eq: Zk SI assumptions}
    Z^{(k)}_{j'}&\uparrow_{st}Z^{(k)}_{j''}\text{ for all }(j',j'')\in E \cap (p[0,i])^2 \quad \text{and }\\
Z^{(k)}_{j''}&\uparrow_{st}Z^{(k)}_{j'}\text{ for all }(j',j'')\in E \backslash((p[0,i])^2\cup (\{i\}\cup d_i)^2),
\end{split}
\end{align}
where, for a set \(S,\) the Cartesian product \(S\times S\) is denoted by \(S^2.\)
Consider the subtree $T_0=(N_0,E_0)$ 
with nodes $N_0 := N\backslash d_i$ and edges $E_0 := E\cap N_0^2$. The nodes $N_0$ can be relabeled such that $i$ becomes the new root in $T_0$. By \eqref{eq: Zk SI assumptions} the Markov tree distributed sequence $(Z^{(k)}_n)_{n\in N_0}$ (with respect to the tree $T_0$) satisfies the assumptions of Lemma \ref{lem: supermodular} \eqref{lem: supermodular ii}  and by setting $K=d_i$ and $J=N_0\backslash \{i\}$ (here  $d_i$ still denotes the set of descendents of $i$ with respect to $T$)  we obtain the supermodularity of $\varphi^{(0)}$.

To prove the supermodularity of $\varphi^{(\ell)}$, \(\ell\in \{1,\ldots,m\},\) we assume by induction that $\varphi^{(\ell-1)}$ is supermodular.
By definition of $Z^{(k)}$ and assumption $\eqref{propsmspec1}$  we have  that 
\begin{align}
    Z^{(k)}_{j''}\uparrow_{st}Z^{(k)}_{j'}\text{ for all }(j',j'')\in E\cap (d_{j_{\ell}})^2.
\end{align}
Hence, we obtain supermodularity of $\varphi^{(\ell)}$ by Lemma  \ref{lem: supermodular} \eqref{lem: supermodular ii} by setting $K=\{i\}\cup (c_i\backslash \{j_\ell\})\cup d_{j_{\ell+1}}\cup\dots\cup d_{j_m}$ and $J=d_{j_\ell}$.
 In particular, we obtain that  $\varphi^{(m)}$ is supermodular.

Next, we show that 
\begin{align}\label{eqpropsmspec1}
    E[\phi(Z^{(k-1)})]
    &= E[\varphi^{(m)}(Z^{(k-1)}_i,Z^{(k-1)}_{c_i})] \quad \text{and}\\
    \label{eqpropsmspec2}E[\phi(Z^{(k)})]&=E[\varphi^{(m)}(Z^{(k)}_i,Z^{(k)}_{c_i})].
\end{align}
From Markov tree dependence of $Z^{(k)}$ and $Z^{(k-1)}$ and since the sets of bivariate copulas $D^{k-1}$ and $D^{k}$ are identical for all edges $E\backslash [i,c_i]$, we have 
{\small
\begin{align}\label{eq: phi0_markov}
	\begin{split}
			\varphi^{(0)}(z_i,z_{d_i})&=\int\phi(z_i,z_{d_i},z_{N\backslash (\{i\}\cup d_i)})P^{Z^{(k)}_{N\backslash (\{i\}\cup d_i)}\vert Z^{(k)}_i=z_i,Z^{(k)}_{d_i}=z_{d_i}} (dz_{N\backslash (\{i\}\cup d_i)})\\
		&=\int\phi(z_i,z_{d_i},z_{N\backslash (\{i\}\cup d_i)})P^{Z^{(k-1)}_{N\backslash (\{i\}\cup d_i)}\vert Z^{(k-1)}_i=z_i,Z^{(k-1)}_{d_i}=z_{d_i}} (dz_{N\backslash (\{i\}\cup d_i)})
	\end{split}
\end{align}
}
and, for \(\zz = (z_i,z_{c_i},z_{d_{j_{\ell}}},\dots, z_{d_{j_m}}),\) 
\begin{align}
		& \quad \quad \varphi^{(\ell)}(z_i,z_{c_i},z_{d_{j_{l+1}}},\dots, z_{d_{j_m}})\nonumber\\ \label{eq: phi i_markov}
		&=\int\varphi^{(\ell-1)}(\zz)P^{Z_{d_{j_\ell}}^{(k)}\vert Z^{(k)}_{i}=z_i,Z^{(k)}_{c_i}=z_{c_i}, Z^{(k)}_{d_{j_{\ell+1}}}=z_{d_{j_{\ell+1}}},\dots ,Z^{(k)}_{d_{j_{m}}}=z_{d_{j_{m}}}} (dz_{d_{j_\ell}})\\ \nonumber
		&=\int\varphi^{(\ell-1)}(\zz)P^{Z_{d_{j_\ell}}^{(k-1)}\vert Z^{(k-1)}_{i}=z_i,Z^{(k-1)}_{c_i}=z_{c_i}, Z^{(k-1)}_{d_{j_{\ell+1}}}=z_{d_{y_{\ell+1}}},\dots ,Z^{(k-1)}_{d_{j_{m}}}=z_{d_{j_{m}}}} (dz_{d_{j_\ell}})
\end{align}
for all $\ell=1,\dots,m$.
From \eqref{eq: phi0_markov}, \eqref{eq: phi i_markov}, and the disintegration theorem we obtain
\begin{align}
\begin{split}\label{eq:disintegration calc}
    E[\phi(Z^{(k-1)})]&= E[\varphi^{(0)}(Z^{(k-1)}_i,Z^{(k-1)}_{d_i})] \\
    &= E[\varphi^{(1)}(Z^{(k-1)}_i,Z^{(k-1)}_{c_i},Z^{(k-1)}_{d_{j_2}},\ldots,Z^{(k-1)}_{d_{j_m}})]\\
    &= \cdots \\
    &= E[\varphi^{(m-1)}(Z^{(k-1)}_i,Z^{(k-1)}_{c_i},Z^{(k-1)}_{d_{j_m}})]
    = E[\varphi^{(m)}(Z^{(k-1)}_i,Z^{(k-1)}_{c_i})],     
\end{split}
\end{align}
which shows \eqref{eqpropsmspec1}. The proof for Equation \eqref{eqpropsmspec2} follows similarly replacing $k-1$ by $k$ in \eqref{eq:disintegration calc}.
Hence, we obtain  from \eqref{eqpropsmspec1} and \eqref{eqpropsmspec2} that
\begin{align*}
	E[\phi(Z^{(k-1)})]&=E[\varphi^{(m)}(Z^{(k-1)}_i,Z^{(k-1)}_{c_i})]
    =E[\varphi^{(m)}(X_i,X_{c_i})]\\
    &\leq E[\varphi^{(m)}(Y_i,Y_{c_i})]
    =E[\varphi^{(m)}(Z^{(k)}_i,Z^{(k)}_{c_i})]
    =E[\phi(Z^{(k)})],
\end{align*}
For the second and third equality, we use that \((Z^{(k-1)}_i,Z^{(k-1)}_{c_i})\eqd (X_i,X_{c_i})\) and \((Z^{(k)}_i,Z^{(k)}_{c_i}) \eqd (Y_i,Y_{c_i}),\) which follows from the definition of  $Z^{(k-1)}$ and $Z^{(k)}$. 
For the inequality, we apply Lemma \ref{lemfacmod} using on the one hand that, by Markov tree dependence, the random variables \(X_{c_i}=(X_{j_1}, \ldots,X_{j_m})\) are conditionally independent given \(X_{i}\), and the random variables \(Y_{c_i}= (Y_{j_1}, \ldots,Y_{j_m})\) are conditionally independent given \(Y_i\). On the other hand, \(X_{j_\ell}\uparrow_{st} X_i\) by assumption \eqref{propsmspec1}, \(Y_{j_\ell}\uparrow_{st} Y_i\) by assumption \eqref{propsmspec2}, and \((X_i,X_{j_\ell})\leq_{sm} (Y_i,Y_{j_\ell})\) for \(\ell\in \{1,\ldots,m\}.\)

This finally shows \eqref{eq: equation for Zi1} and thus \eqref{eq: equation for Zi} for fixed $k\in \{1,\dots,d-1\}$. As we can repeat this procedure for each $k\in \{1,\dots,d-1\}$, the result is proven. 
\end{proof}

The comparison result formulated in Proposition \ref{thm: supermodular order of tree specifications} extends Lemma \ref{lemfacmod} to tree structures. However, it does not generalize Proposition \ref{propMKprocess} as the SI assumptions in Proposition \ref{thm: supermodular order of tree specifications} are more stringent when applied to chains. 
In the following proof of Theorem \ref{thm: supermodular order of tree specifications generalized}, it turns out that these SI assumptions on $Y$ can be omitted in one direction along a specified path \(P\) starting at the root node and---provided the path has finite length---terminating at a leaf node. Consequently, Theorem \ref{thm: supermodular order of tree specifications generalized} also extends Proposition \ref{propMKprocess} to tree structures.
Additionally, we can omit the SI assumption on \(X\) for one of the edges connected with the root. 

\begin{proof}[Proof of Theorem \ref{thm: supermodular order of tree specifications generalized}]
Similar to the proof of Proposition \ref{thm: supermodular order of tree specifications}, we can assume that $d+1\coloneqq |N|<\infty$. 
We show that $X\leq_{sm}Y$. The proof for $X\geq_{sm}Y$ follows analogously. 
    Denote by \(F=(F_n)_{n\in N}\) for \(F_n=F_{X_n}=F_{Y_n}\) the marginal distribution functions of $X$ and $Y$. Moreover, for $e=(i,j)\in E$, denote by \(B_e\) and \(C_e\) any copula for $(X_i,X_j)$ and $(Y_i,Y_j)$, respectively, and define $B=(B_e)_{e\in E}$ and $C=(C_e)_{e\in E}$. Then $X\sim \cM(F,B, T)$ and $Y\sim \cM(F,C, T)$; see 
    Proposition \ref{prop: existence of markov tree dirstribution}.
    We enumerate the path $P=p(0,\ell]=:\{\ell_1,\dots, \ell_m\}$ such that   $(\ell_{j},\ell_{j+1})\in  E$ for all $j=0,\dots,m-1$,  where $\ell_0:= 0$ and \(\ell_m= \ell.\)

   In the following, we partition the tree $T$ along the nodes $k^*,0,\ell_1,\dots,\ell_m$ into \(m+1\) disjoint subtrees, denoted as $T_0,\dots, T_m,$ with corresponding disjoint sets of nodes $N_0, \dots, N_m\subseteq N$ and edges $E_0,\dots, E_m\subseteq E$. 
   As we will see, due to the selection of the subtrees, the subvectors 
   $(X_n)_{n\in N_k}$ and $(Y_n)_{n\in N_k}$ fulfill the assumptions of Proposition \ref{thm: supermodular order of tree specifications} for all $k=0,\dots,m$.
   We start with the definition of the set of nodes $N_0,\dots,N_m$ by
    \begin{align*}
        N_0&\coloneqq\begin{cases}
		    \{k^*\}\cup d_{k^*}&\text{if }\operatorname{deg}(0)> 1,\\
            \emptyset &\text{if }\operatorname{deg}(0)= 1,\\
		\end{cases}
  \quad\text{and}\\[4pt]
  N_k&\coloneqq N\backslash \Big(\{\ell_k\}\cup d_{\ell_k} \cup \bigcup_{j=0}^{k-1}N_j\Big)\quad\text{for }k=1,\dots,m.
	\end{align*}
 Recall that \(k^*\) is due to the assumptions of Theorem \ref{thm: supermodular order of tree specifications generalized} a child node of the root \(0\) and lies in \(P\) if and only if \(\deg(0)=1.\) Note also that, for a node \(i,\) the set \(d_i\) is defined as the set of descendants of \(i.\)
    Each set of nodes $N_k$ forms a tree $T_k = (N_k,E_k)$ along with its corresponding set of edges $E_k$  defined by 
    \begin{align*}
    E_k\coloneqq \{(i,j)\in E\mid i,j\in N_k\},\text{ for }k=0,\dots,m.
    \end{align*}
    Note that some set of nodes $N_k$ may be empty or a singleton, depending on the structure of the tree $T$. It holds that 
    \begin{align*}
        \bigcup\limits_{k=0}^m N_k=\biguplus\limits_{k=0}^m N_k=N \text{ and }\bigcup\limits_{k=0}^m E_k=\biguplus\limits_{k=0}^m E_k=E\backslash \Big(\{(0,k^*)\cup\bigcup_{j=1}^{m}\{(\ell_{j-1},\ell_{j})\} \Big),
    \end{align*}
    i.e., \(\{N_k\}_{k= 0,\ldots,m}\) is a partition of \(N\) and \(\{E_k\}_{k=0,\ldots,m}\) is a partition of the set of edges \(E\) without the edges that are contained in the specified path \(P\) and without the edge between the root node and \(k^*.\)
 In the upcoming steps, we will successively replace the bivariate copulas of $X$ with those of $Y$. 
 For $e\in E$, consider the sequences \(A^1=(A^1_e)_{e\in E}\) and $A^2=(A^2_e)_{e\in E}$ of bivariate copulas given by
 	\begin{align*}
 A_e^{1}&\coloneqq 
	\begin{cases} 
		C_e &\text{if } e=(0,k^*), \\
		B_e &\text{else, } 
	\end{cases} \quad \text{and} \quad
	A_e^{2}\coloneqq 
		\begin{cases} 
		C_e &\text{if } e\in \{(0,k^*)\}\cup E_0 \\
		B_e &\text{else. }
	\end{cases}
    \end{align*}
    Furthermore, we define
    the sequence of bivariate copulas  \(D^{0}=(D^{0}_e)_{e\in E}\) by
\begin{align*}
    D^0_e&\coloneqq 
	\begin{cases} 
		A_e^2 &\text{if }\operatorname{deg}(0)> 1,\\
		  B_e  &\text{if }\operatorname{deg}(0)= 1,\\
	\end{cases}
\end{align*}
and for $k=1,\dots,m$ the sequences \(D^{2k-1}=(D^{2k-1}_e)_{e\in E}\) and \(D^{2k}=(D^{2k}_e)_{e\in E}\) of bivariate copulas by
	\begin{align*}
  D_e^{2k-1}&\coloneqq 
	\begin{cases} 
		B_e &\text{if } e\in  \bigcup_{u=k+1}^m E_u \cup \bigcup_{u=k}^m\{(\ell_{u-1},\ell_u)\}  ,\\
		C_e &\text{else}, 
	\end{cases}\\
	D_e^{2k}&\coloneqq 
		\begin{cases} 
		B_e &\text{if } e\in \bigcup_{u=k+1}^{m} E_u \cup \bigcup_{u=k+1}^m\{(\ell_{u-1},\ell_u)\} ,\\
		C_e &\text{else}. 
	\end{cases}
    \end{align*}
    Note that, by definition, $D^{2m}=C$. Keep in mind that our aim is to replace all dependencies described by $(B_e)_{e\in E}$  by the dependencies described by $(C_e)_{e\in E}$. The difference between the copula sets $D^{2k-1}$ and $D^{2k}$ lies in the replacement of the copula $B_{(\ell_{k-1},\ell_k)}$ by $C_{(\ell_{k-1},\ell_k)}$. On the other hand, the difference between $D^{2k}$ and $D^{2k+1}$ is given by the replacement of the set of copulas $(B_e)_{e\in E_{k+1}}$ by $(C_e)_{e\in E_{k+1}}$.
    Let 
    \begin{align}
        W^{(u)}& \sim \cM(F,T,A^u) \quad \text{for } u=1,2 \quad \text{and}\\
       \label{defZvau} Z^{(v)}&\sim \cM(F,T,D^v) \quad \text{for } v=0,\dots,2m.
    \end{align}
   be random vectors on $\mathbb R^{d+1}$. The following part of the proof is divided into two steps. In the first step we show that $X\leq_{sm} Z^{(0)}$, and in the second step we show that $ Z^{(0)}\leq_{sm} Y$.\\
    \emph{Step 1:} 
    If $\deg(0)< 2\,$, then $X\eqd Z^{(0)}$ which trivially implies \(X\leq_{sm} Z^{(0)}.\)
    For the other case, assume that \(\deg(0)\geq 2.\) We prove the supermodular comparison 
 \begin{align}\label{eq: firstpartinmaintheoremproof}
 X \leq_{sm} W^{(1)} \leq_{sm} W^{(2)}\eqd Z^{(0)}.
 \end{align}
 To show the first inequality in \eqref{eq: firstpartinmaintheoremproof}, let $\phi\colon \mathbb R^{d+1}\to\mathbb R$ be a bounded supermodular function. Since, by assumption \eqref{thm11}, $X_j\uparrow_{st}X_i$  for all $(i,j)\in E_0$ and $X_j\uparrow_{st}X_i$ for all $(i,j)\in E\backslash(E_0\cup\{(0,k^*)\})$,  we can apply Lemma \ref{lem: supermodular}\nolinebreak \eqref{lem: supermodular ii} twice and find that the function $\widetilde\varphi^{(1)}\colon \R^2 \to \R$ given by 
 \begin{align}
   \widetilde\varphi^{(1)}(x_0,x_{k^*})\coloneqq \int \int \phi(x)P^{X_{d_{k^*}}|X_{k^*}=x_{k^*}}(dx_{d_{k^*}})P^{X_{N\backslash (N_0\cup\{0\})}\vert X_0=x_0}(dx_{N\backslash (N_0\cup\{0\})})
 \end{align}
 is supermodular. Note that we do not need any SI assumption on $(X_0,X_{k^*})$. By the  disintegration theorem and the Markov property,
 we have that
    \begin{align} \begin{split}\label{eq:tilde Z1}
		E[\phi(X)]&=E[\widetilde\varphi^{(1)}(X_0,X_{k^*})],\quad \text{and }\\
    E[\phi(W^{(1)})]&=E[\widetilde\varphi^{(1)}(W^{(1)}_0, W^{(1)}_{k^*})]=E[\widetilde\varphi^{(1)}(Y_0, Y_{k^*})],
    \end{split}
	\end{align}
where the last equality follows from \((W^{(1)}_0,W^{(1)}_{k^*})\eqd (Y_0, Y_{k^*}).\)
Then the assumption $(X_0,X_{k^*})\leq_{sm} (Y_0,Y_{k^*})$ together with \eqref{eq:tilde Z1}  implies $X \leq_{sm} W^{(1)}$. \\
To show the second inequality in \eqref{eq: firstpartinmaintheoremproof}, again, let $\phi\colon \mathbb R^{d+1}\to\mathbb R$ be a bounded supermodular function. Since, by assumption \eqref{thm11}, $X_j\uparrow_{st}X_i$ for all $(i,j)\in E\backslash(E_0\cup\{(0,k^*)\})$ and by assumption \eqref{thm12}, $Y_0\uparrow_{st} Y_{k^*}$, we have that $W^{(1)}_j\uparrow_{st}W^{(1)}_i$ for all $(i,j)\in E\backslash(E_0\cup\{(0,k^*)\})$ and  $W^{(1)}_0\uparrow_{st} W^{(1)}_{k^*}$. Due to Lemma \ref{lem: stochasticallyincreasing}, we conclude that the function $\widetilde\varphi^{(2)}$ given by 
 \begin{align}
   \widetilde\varphi^{(2)}(x_{N_0})\coloneqq \int \phi(x)P^{W^{(1)}_{N\backslash N_0}|W^{(1)}_{k^*}=x_{k^*}}(dx_{N\backslash N_0})
 \end{align}
    is supermodular. By the disintegration theorem and the Markov property, we obtain  
    \begin{align} 
        \begin{split}\label{eq:tilde Z2}
            E[\phi(W^{(1)})]&=E[\widetilde\varphi^{(2)}(W^{(1)}_{N_0})]=E[\widetilde\varphi^{(2)}(X_{N_0})],\\  E[\phi(W^{(2)})]&=E[\widetilde\varphi^{(2)}(W^{(2)}_{N_0})]=E[\widetilde\varphi^{(2)}(Y_{N_0})],
        \end{split}
	\end{align}
 where the second equality in the first and second line follow from \(W^{(1)}_{N_0}\eqd X_{N_0}\) and \(W^{(2)}_{N_0}\eqd Y_{N_0}\,\), respectively.
  Assumptions \eqref{thm11} -- \eqref{thm13}, imply that the subvectors $(X_n)_{n\in N_0}$ and $(Y_n)_{n\in N_0}$ fulfill the assumptions of Proposition \ref{thm: supermodular order of tree specifications}. Thus, \eqref{eq:tilde Z2} implies $ W^{(1)} \leq_{sm} W^{(2)}$, which proves \eqref{eq: firstpartinmaintheoremproof} using $W^{(2)}\eqd Z^{(0)}$ by the definition of \(W^{(2)}\) and \(Z^{(0)}.\)\\
\emph{Step 2:} We aim to show that 
  \begin{align}\label{eq: secondpartinmaintheoremproof}
 Z^{(0)}\leq_{sm} Z^{(1)}\leq_{sm}\dots\leq_{sm}Z^{(2m)}=Y,
 \end{align}
 The proof of \eqref{eq: secondpartinmaintheoremproof} is divided into two parts. First, we show that
  \begin{align}\label{eq:Zk-2}
     Z^{(2k-2)}\leq_{sm}Z^{(2k-1)},\quad\text{for all }k=1,\dots,m,
 \end{align}
and then show that 
 \begin{align}\label{eq:Zk-1}
     Z^{(2k-1)}\leq_{sm}Z^{(2k)},\quad\text{for all }k=1,\dots,m.
 \end{align}
To prove \eqref{eq:Zk-2} let $\phi\colon \mathbb R^{d+1}\to\mathbb R$  be a bounded and supermodular function and $k\in \{1,\dots,m\}$. If $|N_k|\leq 1$ we have that $Z^{(2k-2)}\eqd Z^{(2k-1)}$ and thus $Z^{(2k-2)}\leq_{sm} Z^{(2k-1)}$.  
So assume that \(|N_k|>1.\)
Since, by assumption \eqref{thm12}, $Y_j\uparrow_{st}Y_i$ for all $(i,j)\in \bigcup_{u=0}^{k-1} E_u$ and $Y_{k^*}\uparrow_{st} Y_0$ if $\deg(0)>1$ and $Y_i\uparrow_{st}Y_j$ for all $(i,j)\in \bigcup_{u=1}^{k-1}\{(\ell_{u-1},\ell_{u})\} $, we have that $Z^{(2k-2)}_j\uparrow_{st}Z^{(2k-2)}_i$ for all $(i,j)\in \bigcup_{u=0}^{k-1} E_u $, and $Z^{(2k-2)}_{k^*}\uparrow_{st} Z^{(2k-2)}_0$ if $\deg(0)>1$,  and $Z^{(2k-2)}_i\uparrow_{st}Z^{(2k-2)}_j$ for all $(i,j)\in \bigcup_{u=1}^{k-1}\{(\ell_{u-1},\ell_{u})\} $ . Further, by assumption \eqref{thm11}, $X_j\uparrow_{st}X_i$ for all $(i,j)\in \bigcup_{u=k+1}^{m}E_u\cup \bigcup_{u=k}^{m}\{(\ell_{u-1},\ell_u)\}$ and, consequently, $Z^{(2k-2)}_j\uparrow_{st}Z^{(2k-2)}_i$ for all $(i,j)\in \bigcup_{u=k+1}^{m}E_u\cup \bigcup_{u=k}^{m}\{(\ell_{u-1},\ell_u)\}$. Note that if $k=1$ and $\operatorname{deg}(0)= 1$, it holds that $|N_1|=\{0\}$, which belongs to the previous case.  Now, we obtain from Lemma \ref{lem: supermodular} \eqref{lem: supermodular ii}, applied on the subtree with nodes $(N\backslash N_k)\cup \{\ell_{k-1}\}$ and root $\ell_{k-1}$ by setting $K=N_k\backslash\{\ell_{k-1}\}$ and $J=N\backslash N_k$, that the function $\varphi^{(2k-1)}$ given by
 \begin{align}
     \varphi^{(2k-1)}(x_{N_{k}})\coloneqq \int \phi(x)P^{Z^{(2k-2)}_{N\backslash N_k}\vert Z^{(2k-2)}_{\ell_{k-1}}=x_{\ell_{k-1}}}(dx_{N\backslash N_k})
 \end{align}
 is supermodular. Then we obtain
    \begin{align}
    \begin{split}\label{eq:Z1}
		  E[\phi(Z^{(2k-2)})]&=E[\varphi^{(2k-1)}(Z^{(2k-2)}_{N_k})]=E[\varphi^{(2k-1)}(X_{N_k})],\text{ and } \\E[\phi(Z^{(2k-1)})]&=E[\varphi^{(2^{k-1})}(Z^{(2k-1)}_{ N_k})]=E[\varphi^{(2k-1)}(Y_{N_k})],
    \end{split}
	\end{align}
where the first  equality in each line follows by the disintegration theorem and the Markov  property and the second ones since $X_{N_k}\eqd Z_{N_k}^{(2k-2)}$ and $Y_{N_k}\eqd Z_{N_k}^{(2k-1)}$ by definition of \(Z^{(2k-2)}\) and \(Z^{(2k-1)}\) in \eqref{defZvau}.
 Assumptions \eqref{thm11} -- \eqref{thm13} of Theorem \ref{thm: supermodular order of tree specifications generalized}, imply that the subvectors $(X_n)_{n\in N_k}$ and $(Y_n)_{n\in N_k}$ fulfill the assumptions of Proposition \ref{thm: supermodular order of tree specifications} which implies \(E[\varphi^{(2k-1)}(X_{N_k})] \leq E[\varphi^{(2k-1)}(Y_{N_k})]\). Together with \eqref{eq:Z1}, we obtain \eqref{eq:Zk-2}.

To prove \eqref{eq:Zk-1}, let again $\phi\colon \mathbb R^{d+1}\to\mathbb R$  be a bounded and supermodular function and $k\in \{1,\dots,m\}$. Since, by assumption \eqref{thm12}, $Y_j\uparrow_{st}Y_i$ for all $(i,j)\in \bigcup_{u=0}^{k} E_u $, and $Y_{k^*}\uparrow_{st} Y_0$ if $\deg(0)>1$, and $Y_i\uparrow_{st}Y_j$ f.a. $(i,j)\in \bigcup_{u=1}^{k-1}\{(\ell_{u-1},\ell_{u})\} $, we have that $Z^{(2k-1)}_j\uparrow_{st}Z^{(2k-1)}_i$ for all $(i,j)\in \bigcup_{u=0}^{k} E_u $ and $Z^{(2k-1)}_{k^*}\uparrow_{st} Z^{(2k-1)}_0$ if $\deg(0)>1$ and $Z^{(2k-1)}_i\uparrow_{st}Z^{(2k-1)}_j$ for all $(i,j)\in \bigcup_{u=1}^{k-1}\{(\ell_{u-1},\ell_{u})\} $.
Further, by assumption \eqref{thm11}, $X_j\uparrow_{st}X_i$ for all $(i,j)\in \bigcup_{u=k+1}^{m}E_u\cup \bigcup_{u=k+1}^{m}\{(\ell_{u-1},\ell_u)\}$ and, thus, $Z^{(2k-1)}_j\uparrow_{st}Z^{(2k-1)}_i$ for all $(i,j)\in \bigcup_{u=k+1}^{m}E_u\cup \bigcup_{u=k+1}^{m}\{(\ell_{u-1},\ell_u)\}$. We apply Lemma \ref{lem: supermodular} \eqref{lem: supermodular ii} twice
and find that the function $\varphi^{(2k)}$ 
given by
 \begin{align}
 \begin{split}
  &\varphi^{(2k)}(x_{\ell_{k-1}},x_{\ell_k}) \\&\coloneqq \int \int \phi(x)P^{Z^{(2k-1)}_{N\backslash (\{\ell_{k-1},\ell_{k}\}\cup d_{\ell_k})}|Z^{(2k-1)}_{\ell_{k-1}}=x_{\ell_{k-1}}}(dx_{N\backslash (\{\ell_{k-1},\ell_k\}\cup d_{\ell_k})})\\
  & \hspace{6cm} P^{Z^{(2k-1)}_{ d_{\ell_k}}\vert Z^{(2k-1)}_{\ell_k}=x_{\ell_k}}(dx_{d_{\ell_k}})
 \end{split}
 \end{align}
 is supermodular. Again, we obtain
 \begin{align}
    \begin{split}\label{eq:Z2}
		  E[\phi(Z^{(2k-1)})]&=E[\varphi^{(2k)}(Z^{(2k-1)}_{\ell_{k-1}},Z^{(2k-1)}_{\ell_k})]=E[\varphi^{(2k)}(X_{\ell_{k-1}},X_{\ell_k})],\text{ and } \\E[\phi(Z^{(2k)})]&=E[\varphi^{(2k)}(Z^{(2k)}_{\ell_{k-1}},Z^{(2k)}_{\ell_k})]=E[\varphi^{(2k)}(Y_{\ell_{k-1}},Y_{\ell_k})].
    \end{split}
	\end{align}
 By assumption \eqref{thm13} it holds that $(X_{\ell_{k-1}},X_{\ell_k})\leq_{sm}(Y_{\ell_{k-1}},Y_{\ell_k})$, which together with \eqref{eq:Z2} implies \eqref{eq:Zk-1}.
 
By \eqref{eq:Zk-2} and \eqref{eq:Zk-1} we conclude \eqref{eq: secondpartinmaintheoremproof}. Finally, $X\leq_{sm}Z^{(0)}$, \eqref{eq: secondpartinmaintheoremproof} and the transitivity of the supermodular order imply that $X\leq_{sm} Y$, which proves the first statement. 

To show positive supermodular dependence of $X$, we apply the first statement on $X$ and $Y:=X^{\perp}$.
Note that the random vector $X^{\perp}$ fulfills assumption \eqref{thm12} trivially. Since \(X\) fulfills assumption \eqref{thm11}, we have for $(i,j)\in E\backslash \{(0,k^*)\}$ that $X_{j}\uparrow_{st}X_i$, which implies $(X_i,X_j)\geq_{sm}(X^{\perp}_i,X^{\perp}_j) = (Y_i,Y_j)$; see \eqref{implposdepcon}. 
Hence, with the additional assumption of positive supermodular dependence of $(X_0, X_{k^*})$, we obtain \((X_i,X_j)\geq_{sm} (Y_i,Y_j)\) for all \((i,j)\in E,\) i.e., assumption \eqref{thm13} holds for all $(i, j) \in E$. 
From the first part, we now conclude that $X \geq_{sm} Y = X^\perp$. \\
A similar argument yields positive supermodular dependence of $Y$ under assumption \eqref{thm12} and positive supermodular dependence of \((Y_i,Y_j)\) for \(j\in P\cap L.\) 
\end{proof}

\begin{proof}[Proof of Corollary \ref{corsm}:]
Since the supermodular order is a pure dependence order, it requires the marginal distributions to coincide. Hence, from condition \eqref{thm13} of Theorem \ref{thm: supermodular order of tree specifications generalized} (i.e., \((X_i,X_j)\leq_{sm} (Y_i,Y_j)\) for all \((i,j)\in E\)) it follows that 
\begin{align*}
    F_{X_n} = F_{Y_n} =:F_n \quad\text{for all } n\in N.
\end{align*}
Further, as a consequence of \eqref{eqbivord}, there exist bivariate copulas for $(X_i,X_j)$ and $(Y_i,Y_j)$ that are pointwise ordered, i.e.,
\begin{align*}
    C_{X_i,X_j} \leq_{lo} C_{Y_i,Y_j}\quad \text{for all } e=(i,j)\in E.
\end{align*}
Condition \eqref{thm11} of Theorem \ref{thm: supermodular order of tree specifications generalized} (i.e., \(X_j\uparrow_{st} X_i\) for $(i,j)\in E$, \(j\ne k^*\)), means that the bivariate random vector \((X_i,X_j)\) is CIS. Since, for bivariate random vectors, the SI property is invariant under increasing transformations, it follows that there exists a bivariate copula \(B_e\) such that 
\begin{align*}
    B_e \text{ is SI} \quad \text{and} \quad B_e=C_{X_i,X_j} \text{ on } \Ran(F_i)\times \Ran(F_j) \quad\text{for } e=(i,j)\in E,\, j\ne k^*.
\end{align*}
Let \(D^\top\) denote the transpose of a bivariate copula \(D,\) defined as \(D^\top(u,v):=D(v,u)\) for all \((u,v)\in [0,1]^2.\) 
Then condition \eqref{thm12} of Theorem \ref{thm: supermodular order of tree specifications generalized} (i.e., \(Y_i\uparrow_{st} Y_j\) if \(j\notin L\), and \(Y_j\uparrow_{st} Y_i\) if \(j\notin P\), \(e=(i,j)\in E\)) means that there exists a bivariate copula \(C_e\) such that 
\begin{align*}
\begin{cases}
C_e \text{ is \CI}, & \text{for } j\notin L\cup P, \\
    C_e^\top \text{ is SI}, & \text{for } j\in P\setminus L, \\
    C_e \text{ is SI}, & \text{for } j\in L\setminus P, 
\end{cases}
    &&\quad \text{and} \quad C_e = C_{Y_i,Y_j} \text{ on } \Ran(F_i)\times \Ran(F_j).
\end{align*}
Finally, we have decomposed the distributions of $X$ and $Y$ into bivariate tree specifications with identical marginal distribution functions $F=(F_n)_{n\in N}$, and with pointwise ordered SI and \textnormal{CI} copulas $B=(B_e)_{e\in E}$ and $C=(C_e)_{e\in E}$.
Due to Proposition \ref{prop: existence of markov tree dirstribution}, every bivariate tree specification yields a uniquely determined Markov tree realization, which proves the results incorporating some additional SI conditions.
\end{proof}

   


\subsection{Proofs of Section \ref{sec: preliminaries}}\label{appendix: A}

\begin{proof}[Proof of Proposition \ref{prop: existence of markov tree dirstribution}]
 For $|N|<\infty$, we prove the statement by an induction over the number of nodes  $N=\{0,\dots,d\}$. Starting with $|N|=2$, the statement is trivial. For the induction step, assume that the statement is true for each tree $T$ with nodes $N=\{0,\dots,n\}$ for a fixed \(n\in \{1,\ldots,d-1\}.\) We need to show the statement for \(N=\{0,\dots,n+1\}\). By re-enumeration we may assume that the node $n+1$ is a leaf in $N$ with parent node $n$, i.e., $c_{n+1}=\emptyset$ and $p_{n+1}=n$; see Definition \ref{def:childs_descendants_ancestors}.  We define the subtree  $T'=(N',E')$ with nodes $N'=\{0,\dots,n\}$ and edges $E'=\{(j,k)\in E \mid j,k\neq n+1\}$, which is a tree with $|N'|-1=n$ nodes. By the induction hypothesis, there is a Markov tree distribution $\mu_{\cT'}$  specified through $((F_j)_{j\in N'},T',(B_e)_{e\in E'})$. Let $(X_{n}, X_{n+1})$ be a bivariate random vector having distribution function $F_{X_n,X_{n+1}}:= B_{n,n+1}(F_{n},F_{n+1})$ defined through Sklar's theorem. For Borel sets $A\in \mathscr B(\mathbb R^{n+1})$ and $B\in \mathscr B (\mathbb R)$, we define the distribution $\mu_{\cT}$ on \(\cB(\R^{n+2})\) by 
	\begin{align}\label{eqconmkd}
		\mu_{\cT}(A\times B) :=\int_{A}P^{X_{n+1}\vert X_{n}}(B\vert x_n)\mu_{\cT'}(dx_0,\dots,dx_n).
	\end{align}
 Similar to the proof of \cite[Theorem 2]{meeuwissen1994tree}, it follows that \(\mu_\cT\) is the unique distribution with the given specifications that has Markov tree dependence. For $|N|=\infty$, the distribution $\mu_{\cT}$ can be obtained by the Kolmogorov extension Theorem (see \cite[Theorem 6.16]{kallenberg1997foundations}), where the assumption of projective families is satisfied as a consequence of the disintegration theorem.
\end{proof}

\subsection{Proofs of Section \ref{sec3}}\label{appendix: B}


For the proofs of Theorems \ref{thm: maintwo} and \ref{themaindcx}, we make use of the following lemma.

\begin{lemma}\label{prop: Copula of bivariate tree specifications}
	Let $X = (X_0,\ldots,X_d)\sim \cM((F_0,\dots,F_d),T,C)$ be a random vector with marginal specifications $F_0,\dots,F_d$. Further, let $U= (U_0,\ldots,U_d)\sim \cM((G_0,\dots,G_d),T,C)$ be a random vector with distribution function $F_U$, where $G_i$ denotes the distribution function of the rank-transformed random variable \(F_i(X_i).\) Then we have
    \begin{align}
        \label{prop: Copula of bivariate tree specifications i} F_X(x_0,\dots,x_d)=F_U(F_0(x_0),\dots,F_d(x_d))
    \end{align} 
    for all \((x_0,\ldots,x_d)\in \R^{d+1}.\)
\end{lemma}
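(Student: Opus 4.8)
The plan is to factor the claim through the rank-transformed vector $V := (F_0(X_0),\dots,F_d(X_d))$. I would establish two things separately: (I) $V \eqd U$, and (II) $F_X(x_0,\dots,x_d) = F_V(F_0(x_0),\dots,F_d(x_d))$ for every $(x_0,\dots,x_d)\in\R^{d+1}$. Since (I) gives $F_V = F_U$, combining the two statements yields the asserted identity $F_X(x_0,\dots,x_d) = F_U(F_0(x_0),\dots,F_d(x_d))$.

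For (II) I would use the standard flat-region argument. As each $F_i$ is increasing, $\{X_i\le x_i\}\subseteq\{F_i(X_i)\le F_i(x_i)\}$, hence $\bigcap_i\{X_i\le x_i\}\subseteq\bigcap_i\{F_i(X_i)\le F_i(x_i)\}$. Conversely, any outcome in the larger intersection but not the smaller one must satisfy $X_j>x_j$ together with $F_j(X_j)\le F_j(x_j)$ for some $j$; monotonicity then forces $F_j(X_j)=F_j(x_j)$, so $X_j$ lies in an interval on which $F_j$ is constant. Such an interval carries no mass under $F_j$, so $P(X_j>x_j,\,F_j(X_j)=F_j(x_j))=0$. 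A union bound over the finitely many coordinates $j$ shows the two intersections differ by a $P$-null set, whence $P\big(\bigcap_i\{X_i\le x_i\}\big)=P\big(\bigcap_i\{F_i(X_i)\le F_i(x_i)\}\big)=F_V(F_0(x_0),\dots,F_d(x_d))$, which is exactly (II).

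For (I) I would show that $V$ itself realizes the bivariate tree specification $((G_0,\dots,G_d),T,C)$ as a Markov tree distribution and then invoke the uniqueness part of Proposition~\ref{prop: existence of markov tree dirstribution} to conclude $V\eqd U$. The marginals match by construction, since $V_i=F_i(X_i)\sim G_i$. The edge copulas match because copulas are invariant under increasing transformations of the components: for every $e=(i,j)\in E$, applying $F_i$ and $F_j$ leaves $C_e$ a copula of $(F_i(X_i),F_j(X_j))=(V_i,V_j)$ (this is the bivariate instance of the flat-region reasoning of the previous paragraph, via Sklar's theorem). Finally, the Markov tree dependence of $V$ transfers from that of $X$ once I observe that conditioning on $V_i$ coincides almost surely with conditioning on $X_i$: the only failure of injectivity of $F_i$ on $\supp(X_i)$ comes from flat pieces of $F_i$, which are again $P^{X_i}$-null, so $\sigma(V_i)=\sigma(X_i)$ modulo null sets. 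Consequently, for any node $i$ separating disjoint sets $A$ and $B$, the conditional independence of $X_A$ and $X_B$ given $X_i$ passes verbatim to $V_A$ and $V_B$ given $V_i$.

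The main obstacle is precisely the discontinuity and non-strict monotonicity of the marginals $F_i$, which is the situation the lemma is built to handle (cf.\ the accompanying remark that conditional independence is not a purely copula-based property). I must ensure that replacing each $X_i$ by $F_i(X_i)$ neither alters the joint distribution function off a null set (part~(II)) nor disturbs the conditional independence structure (part~(I)). Both difficulties are dissolved by the single observation that the flat regions of $F_i$ are $P^{X_i}$-null, i.e.\ that $F_i$ is essentially injective on the support of $X_i$; with $d+1$ finite this is immediate and requires no additional limiting argument.
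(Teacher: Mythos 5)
Your proof is correct, but it takes a genuinely different route from the paper's. The paper proves the identity by induction over the nodes (attaching one leaf at a time), combining the disintegration construction of the Markov realization with the generalized partial-derivative calculus $\partial_1^{H}$ of \cite{Ansari-Rueschendorf-2021} to match the conditional distribution functions $F_{X_{n+1}\mid X_n}$ and $F_{U_{n+1}\mid U_n}$ on the relevant ranges. You instead argue globally: you verify that the rank-transformed vector $V=(F_0(X_0),\dots,F_d(X_d))$ itself realizes the bivariate tree specification $((G_0,\dots,G_d),T,C)$ --- marginals by construction, edge copulas by invariance of copulas under the componentwise probability-integral transform (exactly the fact the paper invokes via \cite[Theorem 3.3]{Cai-2012}), and Markov tree dependence because $\sigma(F_i(X_i))=\sigma(X_i)$ up to $P$-null sets --- and then you invoke the uniqueness part of Proposition \ref{prop: existence of markov tree dirstribution} to conclude $V\eqd U$, finishing with the flat-region null-set identity $F_X(x_0,\dots,x_d)=F_V(F_0(x_0),\dots,F_d(x_d))$. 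Your route is more elementary (it avoids the $\partial_1^{H}$ machinery entirely) and more structural, since it exhibits the lemma as the statement that the rank transform of a Markov tree distribution is again a Markov tree distribution with the same edge copulas; the paper's induction is more computational but stays within the conditional-distribution calculus it uses elsewhere. One phrasing in your step (I) should be repaired: the flat pieces of $F_i$ are not themselves $P^{X_i}$-null --- a flat piece $[a,b]$ can carry the mass of an atom at its left endpoint $a$ --- rather, each flat piece minus its left endpoint is null, which is equivalent to the standard fact $F_i^{-1}\circ F_i(X_i)=X_i$ $P$-a.s.\ (see \cite[Lemma A.1(ii)]{Ansari-Rueschendorf-2021}, also used in the paper's proof); this is precisely what yields $\sigma(V_i)=\sigma(X_i)$ modulo null sets and hence the transfer of conditional independence. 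Your step (II) as written is fine, since there the interval in question excludes its left endpoint.
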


\begin{proof}[Proof of Lemma \ref{prop: Copula of bivariate tree specifications}]
    We prove the statement by an induction over the number of nodes $N=\{0,\dots,d\}$. Starting with $|N|=2$, the statement is trivial. For the induction step, assume that the statement is true for each tree $T$ with nodes $N=\{0,\dots,n\}$ for a fixed \(n\in \{1,\ldots,d-1\}.\) We need to show the statement for \(N=\{0,\dots,n+1\}\). By re-enumeration we may assume that $n+1\in N$ is a leaf in $N$ with parent node $n$, i.e., $c_{n+1}=\emptyset$ and $p_{n+1}=n$; see Definition \ref{def:childs_descendants_ancestors}. We define the subtree  $T'=(N',E')$ with nodes $N'=\{0,\dots,n\}$ and edges $E'=\{(j,k)\in E|j,k\neq n+1\}$ which is a tree with number of nodes $|N'|-1=n$. By \eqref{eqconmkd} and the transformation formula, using that $F_i^{-1}\circ F_i(X_i) = X_i $  $P$-a.s. (see e.g. \cite[Lemma A.1(ii)]{Ansari-Rueschendorf-2021}), we have that 
    \begin{align}\label{eq: propositionforcopula}
    \begin{split}
                &F_{(X_0,\dots,X_{n+1})}(x_0,\dots,x_n,x_{n+1})\\&=\int_{(-\infty,x_0]\times\dots\times (-\infty,x_n]}F_{X_{n+1}\vert X_n=y_n}(x_{n+1})P^{(X_0,\dots,X_n)}(dy_0,\dots,dy_n)\\
        &=\int_{(-\infty,F_0(x_0)]\times\dots\times (-\infty,F(x_n)]}F_{X_{n+1}\vert X_n=F_n^{-1}(u_n)}(x_{n+1})P^{(F_0(X_0),\dots,F_n(X_n))}(\de \uu)
    \end{split}
    \end{align}
    for \(\de\uu = (du_0,\dots,du_n).\)
Since copulas are invariant under componentwise increasing transformations, it follows that the random vectors $(F_0(X_0),\dots,F_n(X_n))$ and $(X_0,\dots,X_n)$ have the same copula (see e.g. \cite[Theorem 3.3]{Cai-2012}). Moreover, by the induction hypothesis the distribution function $F_{U_0,\dots,U_n}$ coincides with the copula of $(X_0,\dots,X_n)$ on \(\Ran(F_0)\times \cdots\times \Ran(F_d).\) By uniqueness of the copula on \(\Ran(F_0)\times \cdots\times \Ran(F_d),\) we obtain that  
$(F_0(X_0),\dots,F_n(X_n))\eqd (U_0,\dots,U_n)$. Then we have
\begin{align*}
    F_{X_{n+1}\vert X_n=F_n^{-1}(u_n)}(x_{n+1})
    & =\partial_1^{F_n} C_{n,n+1}(F_n(F_n^{-1}(u_n)),F_{n+1}(x_{n+1})) \\
    & =\partial_1^{G_n} C_{n,n+1}(G_n(G_n^{-1}(u_n)),F_{n+1}(x_{n+1})) \\
    & =\partial_1^{G_n} C_{n,n+1}(G_n(G_n^{-1}(u_n)),G_{n+1}(F_{n+1}(x_{n+1}))) \\
    & =F_{U_{n+1}\vert U_n=u_n}(F_{n+1}(x_{n+1})),
\end{align*}
for Lebesgue-almost all \(u_n\in \Ran(F_n)\) and for all \(x_{n+1}\in \Ran(F_{n+1}),\) where the first and last equality hold true by \cite[Theorem 2.2]{Ansari-Rueschendorf-2021}.
Here \(\partial_1^{H}\) denotes a generalized partial derivative operator that takes the partial derivative with respect to the first component and with respect to a univariate distribution function \(H,\) see \cite[Section 2.1]{Ansari-Rueschendorf-2021}.  
For the second equality, we use the assumptions that \((X_n,X_{n+1})\) and \((U_n,U_{n+1})\) have the same copula, and that \(\overline{\Ran(F_n)} = \overline{\Ran(G_n)}\), noting that \(\partial_1^H\) has the property that it only depend on \(\overline{\Ran(H)}.\) For the third equality, we use that \(G_{n+1}(v) = v\) for all \(v\in \Ran(G_{n+1}).\)
Hence, we obtain that \eqref{eq: propositionforcopula} equals 
\begin{align*}
&\int_{(-\infty,F_0(x_0)]\times\dots\times (-\infty,F(x_n)]}F_{U_{n+1}\vert U_n=u_n}(F_{n+1}(x_{n+1}))P^{(U_0,\dots,U_n)}(du_0,\dots,du_n)\\
&=F_{(U_0,\dots,U_{n+1})}(F_0(x_0),\dots,F_d(x_{n+1}))
\end{align*}
for all \((x_0,\ldots,x_{n+1})\in \R^{n+2},\)
which proves the statement. 
\end{proof}

\begin{proof}[Proof of Theorem \ref{thm: maintwo}:]
By the definition of the orderings for stochastic processes, Definition \ref{defdepord} \eqref{defdepordb}, we may assume that $|N|<\infty$.
    Let \(Z = (Z_n)_{n\in N}\sim\cM(F,T,C).\) Then we have \(X\leq_{sm} Z\) due to Theorem \ref{thm: supermodular order of tree specifications generalized} noting that, due to \eqref{eqbivord}, \(B_e\leq_{lo} C_e,\) \(X_i \eqd Z_i,\) and \(X_j\eqd Z_j\) implies \((X_i,X_j)\leq_{sm} (Z_i,Z_j)\) for \(e = (i,j).\) 
    Now, observe that \(Z\) and \(Y\) have the same bivariate copula specifications. Since \(\overline{\Ran(F_n)} = \overline{\Ran(G_n)}\) and thus \(F_n(Z_n)\eqd G_n(Y_n)\) for all \(n\), we obtain from Lemma \ref{prop: Copula of bivariate tree specifications} that \(Z\) and \(Y\) have a common copula.    
    Using the assumption that \(Z_n\leq_{st} Y_n\) (resp. \(Z_n\geq_{st} Y_n\)) for all \(n,\) it follows that \(Z\leq_{st} Y\) (resp. \(Z\geq_{st} Y\)) see \cite[Theorem 4.1]{Mueller-Scarsini-2001}. Hence, for \(f\) increasing (decreasing) supermodular, we obtain \(\E f(X) \leq \E f(Z) \leq \E f(Y)\) whenever the expectations exist.
\end{proof}

\begin{proof}[Proof of Theorem \ref{themaindcx}]
Again, we may assume that $|N|<\infty$. Let $Z=(Z_n)_{n\in N}\sim \mathcal{M}(F,T,C)$ be a random vector having the same univariate marginals as \(X\) and the same bivariate dependence specifications as \(Y.\) 
By assumption \eqref{themaindcx2} and \eqref{implposdepcon} the  copulas $C_e$, \(e\in E,\) are \CI and thus $(Z_i,Z_j)$ is \CI for all $(i,j)\in E$. We deduce from Theorem \ref{thm: supermodular order of tree specifications generalized} that $X\leq_{sm}Z$ and consequently $X\leq_{dcx}Z$. 
When verifying the assumptions for Theorem \nolinebreak \ref{thm: supermodular order of tree specifications generalized}, it is worth noting that distinguishing between the cases where $\operatorname{deg}(0) \geq 2$ and $\operatorname{deg}(0) < 2$ is not necessary since $(Y_i,Y_j)$ is \CI for all $(i,j)\in E$. Consequently, in the case where $\operatorname{deg}(0) \geq 2$, it is always possible to select the path $P$ in the assumptions of Theorem \ref{thm: supermodular order of tree specifications generalized} in such a way that $k^*\notin P.$ 
By the continuity of $F_n$ and $G_n$ for all $n\in N$ and Lemma \ref{prop: Copula of bivariate tree specifications}, the vectors
$Z$ and $Y$ have the same copula. Using the MTP$_2$ assumption, we obtain from Lemma \ref{lemMTP2}, that this copula is MTP$_2$, and thus CI. Hence, it follows with Lemma \ref{lemcCI}, using the assumption \(F_n\leq_{cx} G_n\) for all $n\in N$, that $Z\leq_{dcx}Y$. The transitivity of the directionally convex order implies $X\leq_{dcx}Y$. 
\end{proof}

\subsection{Proofs of Section \ref{secSM}}

\begin{proof}[Proof of Proposition \ref{propSch}]
    Using the notation in \cite{Ansari-Rueschendorf-2023}, there exists for all \(x\in \R\) a Lebesgue-null set \(N_x\subset [0,1]\) such that \(F_{X_i|X_0=q_{X_0}(u)}(x) = \partial_2^{F_{X_0}} C_{X_i,X_0}(F_{X_i}(x),u)\) and \(F_{Y_i|Y_0=q_{Y_0}(u)}(x) = \partial_2^{F_{Y_0}} C_{Y_i,Y_0}(F_{Y_i}(x),u)\) for all \(u\in (0,1)\setminus N_x;\) see \cite[Theorem 2.2]{Ansari-Rueschendorf-2021}. Hence, the statement follows from \cite[Corollary 4(i)]{Ansari-Rueschendorf-2023} because \((X_i|X_0)\leq_S (Y_i,Y_0)\) implies \(\partial_2^{F_{X_0}} C_{X_i,X_0}(F_{X_i}(x),\cdot) \prec_S \partial_2^{F_{Y_0}} C_{Y_i,Y_0}(F_{Y_i}(x),\cdot)\) for all \(x\in \R.\)
    \end{proof}

\subsection{Proofs of Section \ref{secHidMarkov} }\label{appendix: HMM}

\begin{proof}[Proof of Corollary \ref{thmHMM}]
    Hidden Markov models follow a Markov tree distribution with respect to the tree $T=(\N_0,E)$, where $E$ is defined in \eqref{eq: edges HMM}. The first statement then follows by Theorem \ref{thm: supermodular order of tree specifications generalized}. The second statement is a direct consequence of Theorem \ref{thm: maintwo}. The third statement follows from Theorem \ref{themaindcx}.
\end{proof}


\subsection{Proofs of Appendix \ref{secSI}}\label{appendix: D}

\begin{proof}[Proof of Proposition \ref{prop: justification of the SI assumptions}]

\eqref{prop41} We consider two cases. If $j^*\in d_{k^*}$, i.e., $j^*$ is an element of the descendants of $k^*$, we define the vectors by $(X_0,X_{k^*},X_{p(k^*,j^*)},X_{j^*})\coloneqq -Y'$ and $(Y_0,Y_{k^*},Y_{p(k^*,j^*)},Y_{j^*})\coloneqq -X'$, where $X'$ and $Y'$ are given by Example \nolinebreak \ref{exp:c}. Then we extend these vectors by independent random variables (note that the independence copula is \textnormal{CI}) to the Markov tree distributed sequences $X=(X_i)_{i\in N}$ and $Y=(Y_i)_{i\in N}$. These vectors fulfill assumption \eqref{thm11 mod} as well as assumptions \eqref{thm12} and \eqref{thm13} of Theorem \ref{thm: supermodular order of tree specifications generalized}. 
Since \((X_0,X_{k^*},X_{p(k^*,j^*)},X_{j^*})\not\leq_{lo} (Y_0,Y_{k^*},Y_{p(k^*,j^*)},Y_{j^*})\), we obtain from the closure of the lower orthant order under marginalization that $ X \nleq_{lo}Y $ and thus \(X\nleq_{sm} Y.\) In the case where $j^*\notin d_{k^*}$, which implies $\operatorname{deg}(0)\geq 2$, we define the vectors $(X_{k^*},X_{0},X_{p(0,j^*)},X_{j^*})\coloneqq -Y'$ and $(Y_{k^*},Y_{0},Y_{p(0,j^*)},Y_{j^*})\coloneqq -X'$, and proceed as in the first case. 

\eqref{prop42} Since \(T\) is not a star, we have $N\setminus (L\cup\{0\}) \ne \emptyset$. The node \(j^*\in N\setminus (L\cup\{0\})\)  has a parent node  $i\coloneqq p_{j^*}$ and at least one child node $k\in c_{j^*}$. Define the subvectors $(X_i,X_{j^*},X_k)$ and  $(Y_i,Y_{j^*},Y_k)$ as in Example \nolinebreak \ref{exp:a} and extend these $3$-dimensional vectors by independent random variables (note that the independence copula is \textnormal{CI}) to the random vectors $X=(X_i)_{i\in N}$ and $Y=(Y_i)_{i\in N}$. Then \(X\) and \(Y\) fulfill  assumption \eqref{thm12 1mod} as well as assumptions \eqref{thm11} and \eqref{thm13} of Theorem \ref{thm: supermodular order of tree specifications generalized}. We obtain from Example \ref{exp:a} that \((X_i,X_{j^*},X_k)\nleq_{lo} (Y_i,Y_{j^*},Y_k)\) and thus $X\nleq_{lo} Y$ and $X\nleq_{sm} Y$. This shows the second statement.

\eqref{prop43} Since $T$ is not a chain, we have $N\backslash (P\cup \{0\})\neq \emptyset$. For $j^*\in N\backslash (P\cup \{0\})$  we consider the directed path $(0,\dots,j^*)$ from the root $0$ to $j^*$. Let $i$ be the last node in $(0,\dots,j^*)$ such that $i\in P\cup\{0\}$, i.e., the unique node $i\in \{0,\dots,j^*\}\cap (P\cup\{0\}) $ such that $c_i\cap \{0,\dots,j^*\}\cap P=\emptyset$. Since \(i\in P\) and $j^*\notin P$ the node $i$ has a child node $k\in  P\cap c_i$. Define the subvectors $(X_k,X_{p(k,j^*)},X_{j^*})$  and  $(Y_k,Y_{p(k,j^*)},Y_{j^*})$ as in Example \ref{exp:c} and extend this $3$-dimensional vectors by independent random variables (note that the independence copula is \textnormal{CI}) to the random vectors $X=(X_i)_{i\in N}$ and $Y=(Y_i)_{i\in N}$. Then \(X\) and \(Y\) fulfill  assumption \eqref{thm12 2mod} as well as assumptions \eqref{thm11} and \eqref{thm13} of Theorem \ref{thm: supermodular order of tree specifications generalized}. Due to Example \ref{exp:c}, we obtain $(X_k,X_{p(k,j^*)},X_{j^*})\nleq_{lo}(Y_k,Y_{p(k,j^*)},Y_{j^*})$ and consequently $X\nleq_{lo} Y$ and $X\nleq_{sm} Y$. This shows the third statement and the result is proven.
\end{proof}

\end{appendices}

\section*{Acknowledgments}
The authors would like to thank two anonymous referees and an Associate
Editor for careful reading of the manuscript.

\section*{Funding}
The first author gratefully acknowledges the support of the Austrian Science Fund (FWF)
projects 10.55776/PAT1669224 ``SORT: Stochastic Orders for Functional Dependence'' and P 36155-N ``ReDim: Quantifying Dependence via Dimension Reduction'', as well as the support of the WISS 2025 project ’IDA-lab Salzburg’ (20204-WISS/225/197-2019 and 20102-F1901166-KZP), and the support of the Early Career Project 'VeRAin' (IPE191001\_01) funded by the University of Salzburg.









\end{document}